\DeclareMathOperator{\ad}{ad}
\DeclareMathOperator{\lie}{\mathfrak{lie}}
\DeclareMathOperator{\der}{\mathfrak{der}}
\DeclareMathOperator{\dert}{\mathfrak{tder}}
\DeclareMathOperator{\Dert}{{\rm TAut}}
\DeclareMathOperator{\Sder}{{\rm SAut}}
\DeclareMathOperator{\sder}{\mathfrak{sder}}
\DeclareMathOperator{\kv}{\mathfrak{kv}}
\DeclareMathOperator{\KV}{{\rm KV}}
\DeclareMathOperator{\hkv}{\widehat{\kv}}
\DeclareMathOperator{\HKV}{\widehat{\KV}}
\DeclareMathOperator{\grt}{\mathfrak{grt}}
\DeclareMathOperator{\dmr}{\mathfrak{dmr}}
\DeclareMathOperator{\Ass}{Ass}
\DeclareMathOperator{\LLie}{{\rm Lie}}
\DeclareMathOperator{\GRT}{{\rm GRT}}
\DeclareMathOperator{\ch}{ch}
\DeclareMathOperator{\tr}{\mathfrak{tr}}
\DeclareMathOperator{\Tr}{{\rm tr}}
\DeclareMathOperator{\Sol}{{\rm Sol}}
\newcommand\Lie[1]{\mathfrak{#1}}
\newcommand{\g}{\Lie{g}}
\renewcommand{\t}{\Lie{t}}
\renewcommand{\k}{\mathbb{K}}
\newcommand{\R}{\mathbb{R}}
\newcommand{\Z}{\mathbb{Z}}
\newcommand{\tidelta}{\tilde{\delta}}
\newcommand{\dv}{{\rm div}}
\renewcommand{\d}{{\rm d}}
\theoremstyle{plain}
\newtheorem{theorem}{Theorem}[section]
\newtheorem{proposition}{Proposition}[section]
\theoremstyle{definition}
\newtheorem{definition}{Definition}[section]
\newtheorem{remark}{\em Remark}[section]
\newtheorem{example}{\em Example}[section]
\begin{document}

\title[]{The Kashiwara-Vergne conjecture and Drinfeld's associators}

\author{Anton Alekseev}
\address{Section de math\'ematiques, Universit\'e de Gen\`eve, 2-4 rue du Li\`evre,
c.p. 64, 1211 Gen\`eve 4, Switzerland}
\email{alekseev@math.unige.ch}

\author{Charles Torossian}
\address{Universit\'e Denis-Diderot-Paris 7, UFR de math\'ematiques, Site Chevaleret, 
Case 7012, 75205 Paris cedex 13, France }
\email{torossian@math.jussieu.fr}


\begin{abstract}
The Kashiwara-Vergne (KV) conjecture is a property of the Campbell-Hausdorff
series put forward in 1978, in \cite{kv}. It has been settled in the positive
by E.~Meinrenken and the first author in 2006,  in \cite{am}. In this paper, we
study the uniqueness issue for the KV problem. To this end, we introduce
a family of infinite dimensional groups $\KV_n$, and an extension $\HKV_2$ 
of the group $\KV_2$. We show that the group $\HKV_2$ contains the 
Grothendieck-Teichm\"uller group $\GRT$ as a subgroup, and that it acts 
freely and transitively on the set of solutions of the KV problem $\Sol(\HKV)$. 
Furthermore,  we prove that $\Sol(\HKV)$ is isomorphic to a direct product
of a line $\k$ ($\k$ being a field of characteristic zero) and the set
of solutions of the pentagon equation with values in the group $\KV_3$.
The latter contains the set of Drinfeld's associators as a subset. 
As a by-product of our construction, we obtain a new proof of the Kashiwara-Vergne
conjecture based on the Drinfeld's theorem on existence of associators.
\end{abstract}

\subjclass{}

\maketitle

\section{Introduction}

The Kashiwara-Vergne (KV) conjecture is a property of the Campbell-Hausdorff
series which was put forward in \cite{kv}. The KV conjecture has many implications
in Lie theory and harmonic analysis. Let $\g$ be a finite dimensional
Lie algebra over a field of characteristic zero. The KV conjecture implies
the Duflo theorem \cite{duflo} on the isomorphism between the center of the 
universal enveloping algebra $U\g$ and the ring of invariant polynomials
$(S\g)^\g$. Another corollary of the KV conjecture is
a ring isomorphism in cohomology $H(\g, U\g) \cong H(\g, S\g)$ (proved
by Shoikhet \cite{shoikhet} and by Pevzner-Torossian \cite{pt}) for the 
enveloping and symmetric algebras viewed as $\g$-modules with respect
to the adjoint action. For $\k=\R$, another application of the KV conjecture 
is the extension of the Duflo theorem to germs of invariant distributions
on the Lie algebra $\g$ and on the corresponding Lie group $G$ (see 
Proposition 4.1 and Proposition 4.2 in \cite{kv} proved in \cite{ads}
and \cite{ast}).

The KV conjecture was established for solvable Lie algebras by Kashiwara and
Vergne in \cite{kv}, for $\g={\rm sl}(2, \R)$ by Rouvi\`ere in \cite{rouviere},
and for quadratic Lie algebras (that is, Lie algebras equipped with an invariant nondegenerate
symmetric bilinear form, {\em e.g.} the Killing form for $\g$ semisimple) by
Vergne \cite{vergne}. The general case has been settled by Meinrenken and
the first author in \cite{am} based on the previous work of the second
author \cite{torossian} and on the Kontsevich deformation quantization
theory~\cite{kont}.

In this paper, we establish a relation between the KV conjecture and the
theory of Drinfeld's associators developed in \cite{dr}. To this end, we introduce
a family of infinite dimensional groups $\KV_n, n=2,3, \dots$, and 
an extension $\HKV_2$ of the group $\KV_2$. We show that the
set of solutions of the KV conjecture $\Sol(\HKV)$ carries a free and 
transitive action of the group  $\HKV_2$ which contains the Drinfeld's 
Grothendieck-Teichm\"uller group $\GRT$ as a subgroup. Furthermore,
the set $\Sol(\HKV)$ is isomorphic to a direct product of a line $\k$
and the set of solutions of the pentagon equation with values in the group
$\KV_3$. We make use of an involution $\tau$ acting on solutions of the KV
conjecture to select symmetric solutions of the KV problem, $\Sol^\tau(\HKV)$.
The set $\Sol^\tau(\HKV)$ is isomorphic to a direct product of a line
and the set of associators (joint solutions of the pentagon, hexagon and
inversion equations of \cite{dr}) with values in the group $\KV_3$. The latter
contains the set of Drinfeld's associators as a subset.

In summary, we solve the uniqueness issue for the KV problem
in terms of associators with values in the group $\KV_3$. As a by-product,
we obtain a new proof of the KV conjecture. Indeed, by Drinfeld's theorem, 
the set of Drinfeld's associators in non empty. Hence, so is the set of associators
with values in the group $\KV_3$, and the set of symmetric solutions
of the KV conjecture $\Sol^\tau(\HKV)$. This new proof is based
on the theory of associators rather than on the deformation quantization
machine.

An outstanding question which we were not able to resolve is whether or not
the symmetry group of the KV problem, $\HKV_2$, is isomorphic to a direct 
product of a line and the Grothendieck-Teichm\"uller group $\GRT$. A numerical
experiment of L.~Albert and the second author shows that the corresponding
graded Lie algebras coincide up to degree 16! If correct, the isomorphism
$\HKV_2 \cong \k \times \GRT$ would imply that all solutions of the KV conjecture
are symmetric, and that all associators with values in the group $\KV_3$ are
Drinfeld's associators.

Below we explain {\em  raison d'\^etre} of the link between 
the Kashiwara-Vergne and associator theories. One possible formulation
of the KV problem is as follows: find an automorphism $F$ of the 
(degree completion of the) free Lie
algebra with generators $x$ and $y$ such that
\begin{equation} \label{intro:kv}
F: x+y \mapsto \ch(x,y),
\end{equation}
where $\ch(x,y)=x+y+\frac{1}{2}[x,y] + \dots$ is the Campbell-Hausdorff series.
The automorphism $F$ should satisfy several other properties which we omit here.
Consider a free Lie algebra with three generators $x,y,z$ and define
the automorphism $F^{1,2}$ which is equal to $F$ when acting on generators 
$x$ and $y$ and which preserves the generator $z$. Similarly,
define $F^{2,3}$ acting on generators $y$ and $z$ and preserving $x$.
Furthermore, define $F^{12,3}$ acting on $x+y$ and $z$, and 
$F^{1,23}$ acting on $x$ and $y+z$ (for a precise definition
see Section \ref{sec:derivations}). The main property of the Campbell-Hausdorff series
is the associativity,
$$
\ch(x, \ch(y,z))=\ch(\ch(x,y), z).
$$
We use this property to establish the following formula:
$$
\begin{array}{lll}
F^{1,2}F^{12,3}(x+y+z) & = & F^{1,2}(\ch(x+y,z)) \\
                       & = & \ch(\ch(x,y),z)   \\
                       & = &  \ch(x,\ch(y,z))  \\
                       & = & F^{2,3}(\ch(x, y+z)) \\
                       & = & F^{2,3}F^{1,23}(x+y+z).
\end{array}
$$
Hence, the combination
\begin{equation}  \label{intro:assoc}  
\Phi= (F^{12,3})^{-1} (F^{1,2})^{-1} F^{2,3}F^{1,23}
\end{equation}
has the property $\Phi(x+y+z)=x+y+z$ which is one of the defining properties
of the group $\KV_3$. Furthermore, as an easy consequence of \eqref{intro:kv}
and \eqref{intro:assoc}, the automorphism $\Phi$ satisfies the pentagon
equation
\begin{equation}  \label{intro:pent}
\Phi^{1,2,3} \Phi^{1,23,4} \Phi^{2,3,4} = \Phi^{12,3,4} \Phi^{1,2,34} .
\end{equation}
Equation \eqref{intro:pent} is an algebraic presentation of two
sequences of parenthesis redistributions in a product of four objects
(a standard example is a tensor product in tensor categories):
the left hand side corresponds to a passage 
$((12)3)4 \to (1(23))4 \to 1((23)4) \to (1(2(34))$, while the
right hand side to $((12)3)4 \to ((12)(34)) \to 1(2(34))$.
The pentagon equation is the most important element of
the Drinfeld's theory of associators. Our main technical result
shows that solutions of equation~\eqref{intro:pent} with values
in the group $\KV_3$ admit an almost unique decomposition
of the form \eqref{intro:assoc}, and the corresponding 
automorphism $F$ is automatically a solution of the KV problem 
(and, in particular, has the property
\eqref{intro:kv}).

An important object of the Kashiwara-Vergne theory is the Duflo
function $J^{1/2}$ which corrects the symmetrization map 
${\rm sym}: S\g \to U\g$ so as it restricts to a ring isomorphism
on $\ad_\g$-invariants. It is more convenient to discuss the
logarithm of the Duflo function,
\begin{equation}  \label{intro:duflo}
f(x)=\frac{1}{2} \ln\left( \frac{e^{x/2}-e^{-x/2}}{x} \right) =
\frac{1}{2} \sum_{k=2}^\infty \, \frac{B_k}{k \cdot k!} \, x^k, 
\end{equation}
where $B_k$ are Bernoulli numbers. The function $f(x)$ is even,
and it is known that any function $\tilde{f}(x)=f(x)+h(x)$ with $h(x)$ odd still
defines a ring isomorphism between $Z(U\g)$ and $(S\g)^\g$
(in the category of Lie algebras, all these isomorphisms
coincide with the Duflo isomorphism). We show that the Drinfeld's
generators $\sigma_{2k+1}, k=1,2,\dots$ of the Grothendieck-Teichm\"uller
Lie algebra $\grt$ define flows on the set of solutions of the KV 
conjecture $\Sol(\HKV)$, and on the odd parts of Duflo functions 
such that $(\sigma_{2k+1} \cdot h)(x)=-x^{2k+1}$. Hence, all odd
formal power series (the linear term of the Duflo function is not
well defined) $h(x)$ can be reached by the action of the group $\GRT$
on the symmetric Duflo function \eqref{intro:duflo}.
This action coincides with the one described in \cite{kont'} (see Theorem 7).

The plan of the paper is as follows: in Section \ref{sec:free} we 
introduce a Hochschild-type cohomology theory for free Lie algebras, 
compute the cohomology in low degrees (Theorem \ref{th:hdelta}), and discuss
the associativity property of the Campbell-Hausdorff series. In Section
\ref{sec:derivations} we study derivations of free Lie algebras.
Again, we define a Hochschild-type cohomology theory, and compute
cohomology in low degrees (Theorem \ref{th:hd}). In Section \ref{sec:KVlie}
we introduce a family of Kashiwara-Vergne Lie algebras $\kv_n$ and the
Lie algebra $\hkv_2$, and show that the Grothendieck-Teichm\"uller Lie
algebra $\grt$ injects into $\hkv_2$ (Theorem \ref{th:grt}).
In Section \ref{sec:kv} we give a new formulation of the Kashiwara-Vergne
conjecture, and show that it is equivalent to the original
statement of \cite{kv} (Theorem \ref{th:old=new}).
In Section \ref{sec:duflo} we discuss properties of Duflo
functions and show that they can acquire arbitrary odd parts.
In Section \ref{sec:pent} we establish a link between solutions
of the KV problem and solutions of the pentagon equation with values
in the group $\KV_3$ (Theorem \ref{th:pent=>kv}).
In Section \ref{sec:sym} we discuss an involution $\tau$ on the
set of solutions of the KV problem, and derive the hexagon equations 
using this involution. Finally, in Section \ref{sec:associators}
we introduce associators with values in the group $KV_3$, compare them
to Drinfeld's associators, and give a new proof of the KV conjecture 
(Theorem \ref{th:kvneq0}).

\vskip 0.3cm

{\bf Acknowledgements:} We are indebted to V.~Drinfeld for posing us a question
on the relation between solutions of the Kashiwara-Vergne conjecture and associators.
We are grateful to L.~Albert who helped setting up numerical experiments
which influenced this project in a significant way.
We thank D.~Bar-Natan, M.~Duflo, A.~Joseph, I.~Marin, E.~Meinrenken and S.~Sahi
for useful discussions and remarks. 
Research of A.A. was supported in part by the Swiss National Science Foundation.
Research of C.T. was supported by CNRS.

\section{Free Lie algebras} \label{sec:free}

\subsection{Lie algebras $\lie_n$ and the Campbell-Hausdorff series.} \label{subsec:lie}
Let $\k$ be a field of characteristic zero, and let $\lie_n=\lie(x_1,\dots, x_n)$ 
be the degree completion of the free Lie algebra over $\k$ with generators
$x_1, \dots, x_n$. It is a graded Lie algebra
$$
\lie_n = \prod_{k=1}^\infty \lie^k(x_1, \dots, x_n),
$$
where $\lie^k(x_1, \dots, x_n)$ is spanned by Lie words consisting
of $k$ letters.  In case of $n=1,2,3$ we shall often denote 
the generators by $x,y,z$. 

The universal enveloping algebra of $\lie_n$ is
the degree completion of the free associative
algebra with generators $x_1, \dots, x_n$, $U(\lie_n)=\Ass_n$.  
Every element $a \in \Ass_n$ has a unique decomposition
\begin{equation} \label{eq:partial}
a=a_0 + \sum_{k=1}^n (\partial_k a) x_k,
\end{equation}
where $a_0\in \k$ and $(\partial_k a) \in \Ass_n$. 

The Campbell-Hausdorff series is an element of $\Ass_2$ defined by
formula $\ch(x,y)=\ln(e^xe^y)$, where $e^x=\sum_{k=0}^\infty x^k/k!$ and
$\ln(1-a)=-\sum_{k=1}^\infty a^k/k$. By Dynkin's theorem \cite{dynkin},
$\ch(x,y) \in \lie_2$ and
$$
\ch(x,y)=x+y+\frac{1}{2}[x,y] + \dots ,
$$
where $\dots$ stands for a series in multiple Lie brackets in $x$ and $y$.
The Campbell-Hausdorff series satisfies the associativity property
in $\lie_3$,
\begin{equation} \label{eq:assocch}
\ch(x,\ch(y,z))=\ch(\ch(x,y),z). 
\end{equation}
One can rescale the Lie bracket of $\lie_2$ by posing
$[\cdot, \cdot]_s=s[\cdot , \cdot]$ for $s\in \k$ to obtain a rescaled Campbell-Hausdorff
series,
$$
\ch_s(x,y)=x+y+\frac{s}{2} [x,y] + \dots ,
$$
where elements of $\lie^k(x,y)$ get a extra factor of $s^{k-1}$. Note
that $\ch_s(x,y)=s^{-1}\ch(sx,sy)$ and $\ch_0(x,y)=x+y$. The rescaled
Campbell-Hausdorff series $\ch_s(x,y)$ satisfies the associativity
equation,
$$
\begin{array}{lll}
\ch_s(x,\ch_s(y,z)) & = & s^{-1} \ch(sx, \ch(sy,sz)) \\
 & = & s^{-1} \ch(\ch(sx,sy),sz) \\
& = & \ch_s(\ch_s(x,y),z).
\end{array}
$$

\begin{remark}
Let $\g$ be a finite dimensional Lie algebra over $\k$. Then, every element
$a \in \lie_n$ defines a formal power series $a_\g$ on $\g^n$ with values
in $\g$. For instance, the Campbell-Hausdorff series $\ch \in \lie_2$
defines a formal power series $\ch_\g$ on $\g^2$  with rational coefficients.
For every finite dimensional Lie algebra $\g$ this formal power series has
a finite convergence radius. 
\end{remark}

\subsection{The vector space $\tr_n$.}  \label{subsec:tr}
For every $n$ we define a graded vector space $\tr_n$ as a quotient
$$
\tr_n = \Ass^+_n/\langle (ab-ba); a,b \in \Ass_n \rangle .
$$
Here $\Ass^+_n=\prod_{k=1}^\infty \Ass^k(x_1, \dots, x_n)$, and
$\langle (ab-ba); a,b \in \Ass_n \rangle$ is the subspace of $\Ass^+_n$ spanned
by commutators. Product of $\Ass_n$ does not
descend to $\tr_n$ which only has a structure of a graded vector
space. We shall denote by $\Tr: \Ass_n \to \tr_n$ the natural
projection. By definition, we have $\Tr(ab)=\Tr(ba)$ for all $a,b \in \Ass_n$
imitating the defining property of trace.

\begin{example}
The space $\tr_1$ is isomorphic to the space of formal power series
in one variable without constant term, $\tr_1 \cong x \k[[x]]$.
This isomorphism is given by the following formula,
$$
f(x)=\sum_{k=1}^\infty f_k x^k \mapsto  \sum_{k=1}^\infty f_k \Tr(x^k) .
$$
\end{example}

In general, graded components $\tr^k_n$ of the space $\tr_n$ are spanned by
words of length $k$ modulo cyclic permutations.

\begin{example}
For $n=2$, $\tr_2^1$ is spanned by $\Tr(x)$ and $\Tr(y)$,
$\tr_2^2$ is spanned by $\Tr(x^2), \Tr(y^2)$ and $\Tr(xy)=\Tr(yx)$,
$\tr_2^3$ is spanned by $\Tr(x^3)$, $\Tr(x^2y)$, $\Tr(xy^2)$ and  $\Tr(y^3)$,
$\tr_2^4$ is spanned by $\Tr(x^4)$, $\Tr(x^3y)$, $\Tr(x^2y^2)$, $\Tr(xyxy)$, $\Tr(xy^3)$ and
$\Tr(y^4)$ {\em etc.}
\end{example}

\begin{remark}
Let $\g$ be a finite dimensional Lie algebra over $\k$, $\rho: \g \to {\rm End}(V)$
be a finite dimensional representation of $\g$, and $a=\sum_{k=1}^\infty a_k \in \tr_n$
an element of $\tr_n$.
We define $\rho(a)$ as a formal power series on $\g^n$ such that
$\rho(\tr(x_{i_1} \dots x_{i_k}))={\rm Tr}_V(\rho(x_{i_1}) \dots \rho(x_{i_k}))$
for monomials, and this definition extends by linearity to all elements of $\tr_n$. 
\end{remark}

\subsection{Cohomology problems in $\lie_n$ and $\tr_n$.} \label{subsec:coh}
For all $n=1,2, \dots$ we define an operator $\delta: \lie_n \to \lie_{n+1}$ by formula
\begin{equation} \label{eq:delta}
\begin{array}{lll}
(\delta f)(x_1, \dots, x_{n+1}) & = & f(x_2,x_3, \dots, x_{n+1}) \\
& + & \sum_{i=1}^{n} (-1)^i f(x_1, \dots, x_i+x_{i+1}, \dots, x_{n+1}) \\
& + & (-1)^{n +1}f(x_1, \dots, x_{n}).
\end{array} 
\end{equation}
It is easy to see that $\delta^2=0$.

\begin{example} \label{ex:deltalie}
For $n=1$ and $f=ax \in \lie_1 \cong \k$ we have
$$
(\delta f)(x,y)=f(x)-f(x+y)+f(y)=0.
$$
For $n=2$ we get
$$
(\delta f)(x,y,z)=f(y,z)-f(x+y,z)+f(x,y+z)-f(x,y).
$$
\end{example}

One can also use equation \eqref{eq:delta} to define a differential on 
the family for vector spaces $\tr_n$. By abuse of notations, we denote it by
the same letter, $\delta: \tr_n \to \tr_{n+1}$.

\begin{example} \label{ex:deltatr}
For $n=1$, we have for $f(x)=\Tr(x^k)$
$$
(\delta f)(x,y)=\Tr(x^k+y^k-(x+y)^k).
$$
Note that the right hand side vanishes for $k=1$ and that it is 
non-vanishing for all other $k=2,3 \dots$.
\end{example}

The following theorem gives the cohomology of $\delta$ in degrees $n=1,2$.

\begin{theorem} \label{th:hdelta}
$$
\begin{array}{lll}
H^1(\lie, \delta) & = & {\rm ker}\, (\delta: \lie_1 \to \lie_2) = \lie_1 \, ,\\
H^1(\tr, \delta)  & = & {\rm ker}\, (\delta: \tr_1 \to \tr_2)\cong \k \, \Tr(x) \, , \\
H^2(\lie, \delta) & \cong &  [\k [x,y] \, ] \, , \\
H^2(\tr, \delta) & = & 0 \, .
\end{array}
$$
\end{theorem}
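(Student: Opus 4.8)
The plan is to exploit the grading and reduce everything to linear algebra in a fixed total degree $k$, since the operator $\delta$ of \eqref{eq:delta} is homogeneous of degree zero. Because the free Lie algebra and the trace space on zero generators are trivial, there is no differential into $\lie_1$ or $\tr_1$, so both first cohomologies are plain kernels: $H^1(\lie,\delta)=\ker(\delta\colon\lie_1\to\lie_2)$ and $H^1(\tr,\delta)=\ker(\delta\colon\tr_1\to\tr_2)$. For $\lie$ this is immediate from Example \ref{ex:deltalie}, since $\lie_1=\k x$ lives in degree one and $\delta(ax)=0$, giving $H^1(\lie,\delta)=\lie_1$. For $\tr$ I would use the basis $\Tr(x^k)$ of $\tr_1$ together with Example \ref{ex:deltatr}: the element $\delta\Tr(x^k)=\Tr(x^k+y^k-(x+y)^k)$ equals, up to sign, the sum of all genuinely mixed words of length $k$, each with coefficient $1$; these project to a nonzero element of $\tr_2$ for every $k\ge 2$ and vanish only for $k=1$. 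Hence $\ker(\delta\colon\tr_1\to\tr_2)=\k\,\Tr(x)$.

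For the second cohomologies I would first dispose of the image. In the Lie case $\delta$ annihilates $\lie_1$, so $\mathrm{im}(\delta\colon\lie_1\to\lie_2)=0$ and $H^2(\lie,\delta)=\ker(\delta\colon\lie_2\to\lie_3)$; by Example \ref{ex:deltalie} a cochain $f\in\lie_2$ is a cocycle precisely when
\[
f(y,z)-f(x+y,z)+f(x,y+z)-f(x,y)=0 .
\]
A direct substitution shows that $f=[x,y]$ solves this equation, producing the class $[\k[x,y]]$ in degree two, and the remaining task is to show there are no further cocycles. I would argue degree by degree: in degree one $\delta$ is already injective on $\lie_2^1$ (a short computation gives $\delta(ax+by)=bz-ax$), in degree two $\lie_2^2=\k[x,y]$ is one-dimensional and coincides with the cocycle just found, and for $k\ge 3$ one must establish that the cocycle equation admits only the trivial solution. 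On the trace side, $\mathrm{im}(\delta\colon\tr_1\to\tr_2)$ is at most one-dimensional in each weight, spanned by $\delta\Tr(x^k)$, so $H^2(\tr,\delta)=0$ amounts to proving that this single element already fills all of $\ker(\delta\colon\tr_2\to\tr_3)$ in every weight.

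The organizing principle I would use is that the coface maps in \eqref{eq:delta}—``drop the first variable'', ``add two adjacent variables'', ``drop the last variable''—are exactly the face maps of the nerve of the additive group $(\k,+)$, so that $\{\lie_n\}$, $\{\tr_n\}$ and $\{\Ass_n\}$ are the values of the corresponding polynomial functor on this simplicial vector space. Since these coface maps are Hopf-algebra homomorphisms of $U(\lie_n)=\Ass_n$, the Poincaré--Birkhoff--Witt decomposition (valid in characteristic zero) splits $\{\Ass_n\}$ by the number of Lie factors, exhibiting $(\lie,\delta)$ as a natural direct summand of $(\Ass^+,\delta)$ and $(\tr,\delta)$ as its cyclic quotient $\Ass^+_n/[\Ass_n,\Ass_n]$. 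I expect the main obstacle to be the resulting all-degrees vanishing, namely that for $k\ge 3$ the displayed cocycle equation has no nonzero Lie solution and that every trace $2$-cocycle is a coboundary. The cleanest route I see is to build a partial contracting homotopy on the auxiliary associative complex $(\Ass^+_n,\delta)$ from the canonical decomposition \eqref{eq:partial}, $a=a_0+\sum_k(\partial_k a)x_k$, which contracts everything except the low-degree survivors $x$, $\Tr(x)$ and $[x,y]$, and then to read off the Lie and trace answers from the PBW splitting; alternatively one solves the additive cocycle equation directly by extracting its part linear in the last variable and inducting on the degree. Either way, identifying precisely which classes survive—and verifying on the trace side that the one-dimensional image exhausts the kernel—is the crux of the argument.
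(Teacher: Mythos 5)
Your treatment of the first cohomologies is correct and coincides with the paper's, and your bookkeeping for the second cohomologies is also right: the image from degree one vanishes in the Lie case, $[x,y]$ solves \eqref{eq:4term}, degree-one cochains give $\delta(ax+by)=bz-ax$, and on the trace side the question is whether $\delta(\tr_1)$ exhausts the kernel. But the proposal stops exactly where the theorem's content lies, and you say so yourself: the "all-degrees vanishing" — that \eqref{eq:4term} has no nonzero solution in $\lie_2$ in each degree $n\geq 3$, and that every $\delta$-closed element of $\tr_2$ is a coboundary — is flagged as "the crux" and then left unproved. The "partial contracting homotopy built from \eqref{eq:partial}" is never written down, and there is a structural reason it cannot be routine: since the cohomology is \emph{nonzero} (the class of $[x,y]$ in $H^2(\lie,\delta)$, and $\Tr(x)$ in degree one), no contracting homotopy of the full complex exists, so any "partial" homotopy must be engineered degree by degree to annihilate precisely the complement of the survivors — which is equivalent to the computation you are trying to package away. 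Likewise, the simplicial/nerve framing and the PBW splitting of $\Ass_n$ (legitimate in characteristic zero, since the coface maps are Hopf algebra maps) only transport the problem to the associative complex, whose cohomology you have not computed either; nothing is gained without an explicit calculation somewhere.

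The paper closes this gap with an elementary scaling device you may want to adopt. Substituting $x\mapsto sx,\ y\mapsto x,\ z\mapsto z$ and, symmetrically, $x\mapsto x,\ y\mapsto z,\ z\mapsto sz$ into \eqref{eq:4term}, subtracting, and differentiating at $s=0$, Euler's identity for homogeneous $f$ of degree $n$ turns the terms $f((1+s)x,z)$ and $f(x,(1+s)z)$ into $nf(x,z)$ and yields the closed formula \eqref{eq:nf}. The $s$-derivative at $0$ extracts the component of $f$ linear in one argument, which is rigid: in $\lie_2$ it must be of the form $c\,\ad_{x+z}^{n-1}(\,\cdot\,)$, giving $nf(x,z)=\ad_{x+z}^{n-1}(\alpha x+\beta z)$, while in $\tr_2$ it gives $f(x,z)=\Tr\bigl((\alpha x+\beta z)(x+z)^{n-1}-\alpha x^n-\beta z^n\bigr)$. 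In the Lie case the further substitutions $y=-z$ and $y=-x$ in \eqref{eq:4term} force $f(x,z)=(\alpha-\beta)\ad_x^{n-1}z=(\alpha-\beta)\ad_z^{n-1}x$, which vanishes for $n\geq 3$ since these two elements are linearly independent in a free Lie algebra; in the trace case the coefficient of $\Tr(y^{n-2}xz)$ in $\delta f$ equals $(\beta-\alpha)(n-2)$, so closedness forces $\beta=\alpha$ and then $f=-\alpha\,\delta(\Tr(x^n))$. Until you either carry out this kind of degree-by-degree resolution or actually construct and verify your homotopy, the proposal is a correct reduction together with an unproved core, not a proof.
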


\begin{proof}
The first statement is obvious since $\lie_1= \k x$ and 
$\delta(x)=x-(x+y)+y=0$. The second statement follows from the
calculation of Example \ref{ex:deltatr}. 

For computing the second cohomology, let $f$ be a solution of
degree $n\geq 2$ of equation 
\begin{equation} \label{eq:4term}
f(y,z)-f(x+y,z)+f(x,y+z)-f(x,y)=0.
\end{equation}
By putting
$x \mapsto sx, y \mapsto x, z \mapsto z$ we obtain
$$
f(sx,x)+f((1+s)x,z)-f(sx,x+z)-f(x,z)=0.
$$
In a similar fashion, putting $x \mapsto x, y\mapsto z, z \mapsto sz$ yields
$$
f(x,z)+f(x+z,sz)-f(x,(1+s)z)-f(sz,z)=0.
$$
Subtracting the first equation from the second one
and differentiating the result in $s$  gives
\begin{equation} \label{eq:nf}
\begin{array}{lll}
n f(x,z) & = & \frac{d}{ds} \, (f((1+s)x,z)+f(x,(1+s)z))|_{s=0} \\
& = &\frac{d}{ds} \, (f(sx,x+z)+f(x+z,sz)-f(sx,x)-f(sz,z))|_{s=0}.
\end{array}
\end{equation}
First, we solve equation \eqref{eq:nf} for $f \in \lie_2$. In this case, 
$f(sx,x)=f(sz,z)=0$ and we obtain
$$
f(x,z)=\ad_{x+z}^{n-1}(\alpha x+ \beta z)
$$
for some $\alpha, \beta \in \k$.
For $n=2$, this yields $f(x,z)=(\beta-\alpha)[x,z]$.
It is easy to check that this is a solution of equation \eqref{eq:4term}.

For $n\geq 3$, consider equation \eqref{eq:4term} and 
first put $y=-z$ to get $f(x,z)=-f(x-z,z)$,
and then put $y=-x$ to obtain $f(x,z)=-f(-x,x+z)$. Hence,
$$
f(x,z)=(\alpha-\beta)\ad_x^{n-1}z =(\alpha-\beta) \ad_z^{n-1}x
$$
which implies $f(x,z)=0$. Finally, for $n=1$ we put $f(x,y)=\alpha x+ \beta y$ to 
obtain $\delta f=\alpha x- \beta z$. In conclusion, $\delta f =0$ implies
that $f$ is of degree two, and $f(x,y)= \alpha [x,y]$ for $\alpha \in \k$.

For $f \in \tr_2$ equation \eqref{eq:nf} gives
$$
f(x,z)=\Tr \, \left( (\alpha x+ \beta z)(x+z)^{n-1} - \alpha x^n - \beta z^n \right) \, ,
$$
for some $\alpha,\beta \in \k$.
For $n=1$, it implies $f(x,z)=0$. For $n=2$, we get
$$
f(x,z)=(\alpha+\beta) \Tr (xz) = - \, \frac{\alpha+\beta}{2} \delta(\Tr(x^2)) \, .
$$
For $n\geq 3$, we have
$$
\delta f=(\beta-\alpha)\Tr \, y((x+y)^{n-1}+(y+z)^{n-1}-(x+y+z)^{n-1}-y^{n-1}) \, .
$$
The coefficient in front of $\Tr(y^{n-2}xz)$ in this expression is equal to
$(\beta-\alpha)(n-2)$, and it vanishes if and only if $\beta=\alpha$. In this case,
$f(x,z)=-\alpha \delta(\Tr(x^n))$. Hence, $\delta f=0$ implies the existence
of $g\in \tr_1$ such that $\delta g =f$, and the second cohomology 
$H^2(\tr, \delta)$ vanishes.
\end{proof}

\begin{remark} 
In the proof of Theorem \ref{th:hdelta} we have shown that
${\rm ker}\, (\delta: \lie_2 \to \lie_3) = \k [x,y]$. 
That is, the only solution of equation \eqref{eq:4term} is $f(x,y)=\alpha [x,y]$.
Equation \eqref{eq:4term} has been previously considered in 
the proof of Proposition 5.7 in \cite{dr}. There it is stated that equation
\eqref{eq:4term} has no nontrivial symmetric, $f(x,y)=f(y,x)$, 
solutions in $\lie_2$. 
\end{remark}


\subsection{Applications}   \label{subsec:app}
In this section we collect two simple applications of the 
cohomology computations of Section \ref{subsec:coh}.

\begin{proposition} \label{prop:chunique}
Let $s \in \k$ and let $\chi \in \lie_2$ be a Lie series of the form
$\chi(x,y)=x+y+\frac{s}{2} \, [x,y] + \dots$, where $\dots$ stand for a series
in multibrackets. Assume that $\chi$ is associative, that is
$$
\chi(x, \chi(y,z))=\chi(\chi(x,y), z) \, \in \, \lie_3 \, .
$$
Then, $\chi$ coincides with the rescaled Campbell-Hausdorff series,
$\chi(x,y)=\ch_s(x,y)$.
\end{proposition}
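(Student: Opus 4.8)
The plan is to prove the statement by induction on degree, showing that $\chi$ and $\ch_s$ agree component by component. First I would observe that both series share the same homogeneous parts in degrees $1$ and $2$: by hypothesis $\chi(x,y)=x+y+\tfrac{s}{2}[x,y]+\dots$, and by construction $\ch_s(x,y)=x+y+\tfrac{s}{2}[x,y]+\dots$ as well. Hence the difference $r=\chi-\ch_s\in\lie_2$ has valuation at least $3$. Suppose, for contradiction, that $r\neq 0$, and let $n\geq 3$ be the lowest degree in which $r$ has a nonzero homogeneous component $r_n\in\lie^n(x,y)$, so that $r=r_n+(\deg>n)$.

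The key step is to linearize the associativity equation around $\ch_s$ and read off its degree-$n$ part. Writing $\chi=\ch_s+r$ and expanding $\chi\bigl(x,\chi(y,z)\bigr)$ and $\chi\bigl(\chi(x,y),z\bigr)$, I would keep only terms of total degree at most $n$. Since $\ch_s(x,w)=x+w+\tfrac{s}{2}[x,w]+\dots$ is, in each slot, the identity plus terms of strictly higher degree, its partial derivatives act as the identity modulo degree-raising corrections; moreover any product of two factors coming from $r$ has degree at least $2n$. Keeping track of this bookkeeping, to degree $n$ one finds
\[
\chi\bigl(x,\chi(y,z)\bigr)=\ch_s\bigl(x,\ch_s(y,z)\bigr)+r_n(y,z)+r_n(x,y+z)+(\deg>n),
\]
\[
\chi\bigl(\chi(x,y),z\bigr)=\ch_s\bigl(\ch_s(x,y),z\bigr)+r_n(x,y)+r_n(x+y,z)+(\deg>n),
\]
where all cross terms, being brackets of $r_n$ against $x$ or $z$, are of degree $n+1$ or higher and therefore drop out.

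Now I would subtract the two identities. The associativity of $\chi$ equates their left-hand sides, and the associativity of $\ch_s$ (established in Subsection \ref{subsec:lie}) cancels the $\ch_s$-terms; what remains in degree $n$ is
\[
r_n(y,z)-r_n(x+y,z)+r_n(x,y+z)-r_n(x,y)=0,
\]
which is exactly the cocycle condition $(\delta r_n)(x,y,z)=0$ in the notation of Example \ref{ex:deltalie}. Thus $r_n$ is a degree-$n$ element of $\ker(\delta\colon\lie_2\to\lie_3)$. By Theorem \ref{th:hdelta} this kernel equals $\k[x,y]$, which is concentrated in degree $2$; since $r_n$ is homogeneous of degree $n\geq 3$, it must vanish, contradicting the choice of $n$. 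Therefore $r=0$ and $\chi=\ch_s$.

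The hard part will be the middle paragraph: the degree-$n$ linearization of the associativity obstruction. The delicate point is to verify that, once everything is expanded, the only surviving contributions in degree $n$ are the four evaluations of $r_n$ appearing above, so that the obstruction is precisely $\delta r_n$ and not some more complicated expression; all the nonlinear and bracket corrections must be confirmed to lie in degree $>n$. Once this is in hand, the conclusion is immediate from the degree-$2$ concentration of $H^2(\lie,\delta)$, which is the reason the argument pins down $\chi$ uniquely rather than merely up to a $2$-cocycle.
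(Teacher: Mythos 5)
Your proof is correct and is essentially the paper's argument: both extract from associativity the degree-$n$ relation $(\delta r_n)(x,y,z)=0$ (the paper phrases this as an induction where the inhomogeneous equation $\delta\chi_n=\mathcal{F}(\chi_1,\dots,\chi_{n-1})$ determines $\chi_n$ uniquely, you as a minimal-degree-counterexample for the difference $r=\chi-\ch_s$), and both conclude via Theorem \ref{th:hdelta}, which places $\ker(\delta\colon\lie_2\to\lie_3)=\k[x,y]$ in degree $2$ only. Your explicit linearization bookkeeping in the middle paragraph is sound — the cross terms and $r$-quadratic terms indeed live in degree $\ge n+1$ and $\ge 2n$ respectively — and in fact spells out what the paper leaves implicit in its function $\mathcal{F}$.
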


\begin{proof}
The Lie series $\chi$ and $\ch_s$ coincide up to degree 2.
Assume that they coincide up to degree $n-1$, and let
$\chi=\sum_{n=1}^\infty \chi_n$ with $\chi_n(x,y)$ a Lie polynomial
of degree $n$. The associativity equation implies the following equation
for $\chi_n$:
$$
\chi_n(x, y+z)+\chi_n(y,z)-\chi_n(x,y)-\chi_n(x+y,z)=
\mathcal{F}(\chi_1(x,y), \dots, \chi_{n-1}(x,y)),
$$
where $\mathcal{F}$ is a certain (nonlinear) function of the lower
degree terms. By the induction hypothesis, the lower degree terms
of $\chi$ and $\ch_s$ coincide. And the equation for $\chi_n$ has a
unique solution since the only solution of the corresponding homogeneous 
equation $\delta \chi_n=0$ for $n \geq 3$ is $\chi_n=0$. 
Hence, $\chi_n=(\ch_s)_n$ and $\chi=\ch_s$. 
\end{proof}

Similar to the differential $\delta$, we introduce another 
differential $\tidelta$ acting on $\lie_n$ and $\tr_n$:
\begin{equation} \label{eq:tidelta}
\begin{array}{lll}
(\tidelta f)(x_1, \dots, x_{n+1}) & = & f(x_2,x_3, \dots, x_{n+1}) \\
& + & \sum_{i=1}^{n} (-1)^i f(x_1, \dots, \ch(x_i,x_{i+1}), \dots, x_{n+1}) \\
& + & (-1)^{n +1}f(x_1, \dots, x_{n}).
\end{array} 
\end{equation}
Again, $\tidelta^2=0$, but
in contrast to $\delta$, $\tidelta$ does not preserve the degree.
In the following proposition we compute the cohomology of $\tidelta$
for $n=1,2$.

\begin{proposition} \label{prop:htidelta}
$$
\begin{array}{lll}
H^1(\lie, \tidelta) & = & 0  \, ,\\
H^1(\tr, \delta)  & = & {\rm ker}\, (\tidelta: \tr_1 \to \tr_2)\cong
\k \, \Tr(x) \, , \\
H^2(\lie, \delta) & = & 0 \, , \\
H^2(\tr, \delta) & = & 0 \, .
\end{array}
$$
\end{proposition}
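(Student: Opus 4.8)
The plan is to treat $\tidelta$ as a filtered perturbation of the degree-preserving differential $\delta$ and to bootstrap from the cohomology of $\delta$ already computed in Theorem~\ref{th:hdelta}. The key observation is that $\ch(x_i,x_{i+1}) = x_i + x_{i+1} + (\text{terms of degree} \geq 2)$, so formula~\eqref{eq:tidelta} agrees with \eqref{eq:delta} up to cochains that strictly raise the word length. Filtering $\lie_n = \prod_k \lie^k$ (and likewise $\tr_n$) by $F^p = \prod_{k \geq p}$, both differentials preserve the filtration, and the associated graded of $\tidelta$ is exactly $\delta$: if a cochain $f$ has lowest nonzero homogeneous component $f_m$ in degree $m$, then the degree-$m$ part of $\tidelta f$ equals $\delta f_m$. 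I would therefore run the spectral sequence of this filtered complex, whose first page is $H^\bullet(\lie,\delta)$ (resp. $H^\bullet(\tr,\delta)$), and determine which classes survive by reading off the induced differentials as the next-to-leading terms of $\tidelta$.

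For the free Lie algebra, Theorem~\ref{th:hdelta} gives a first page with $H^1(\lie,\delta) = \k x$ in internal degree $1$ and $H^2(\lie,\delta) = \k[x,y]$ in internal degree $2$, everything else relevant being zero. The only differential that can be nonzero is the one raising internal degree by one, and it is computed from $\tidelta(x) = x + y - \ch(x,y) = -\frac{1}{2}[x,y] + (\text{higher order})$. Thus it carries the generator $x$ to a nonzero multiple of the generator $[x,y]$ and is an isomorphism $\k x \xrightarrow{\sim} \k[x,y]$. This single isomorphism annihilates both groups at once, yielding $H^1(\lie,\tidelta) = 0$ and $H^2(\lie,\tidelta) = 0$.

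For $\tr_n$, Theorem~\ref{th:hdelta} gives $H^1(\tr,\delta) = \k\,\Tr(x)$ and $H^2(\tr,\delta) = 0$. Since $\Tr$ vanishes on commutators, one has $\Tr(\ch(x,y)) = \Tr(x) + \Tr(y)$ exactly, whence $\tidelta(\Tr(x)) = 0$: the class $\Tr(x)$ is already genuinely $\tidelta$-closed, the differential out of it vanishes, and it survives to give $H^1(\tr,\tidelta) = \k\,\Tr(x)$. As the degree-two part of the first page is already zero, nothing can appear in cohomological degree two, so $H^2(\tr,\tidelta) = 0$.

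The step I expect to require the most care is convergence, since $\lie_n$ and $\tr_n$ are degree-completed and $\tidelta$ raises degree by an unbounded amount. Rather than invoke an abstract convergence theorem, I would recast the argument as an explicit order-by-order induction: given a $\tidelta$-closed cochain, its lowest component $f_m$ is $\delta$-closed, hence by Theorem~\ref{th:hdelta} it either must vanish (in the degrees where $\delta$-cohomology is zero) or is a $\delta$-coboundary $\delta g_m$; subtracting $\tidelta g_m$, whose leading term is $\delta g_m$, strictly raises the lowest degree, and one iterates. Because the lowest degree increases at each step, the correcting primitives sum to a well-defined element of the completion, establishing exactness. Checking that this induction is never obstructed is precisely the content of the two differential computations above, so the remaining work is bookkeeping of leading terms rather than any new idea.
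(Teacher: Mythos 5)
Your argument is correct and, once the spectral sequence is unwound into the order-by-order induction of your final paragraph, it coincides with the paper's own proof: the paper likewise extracts the lowest homogeneous component of a $\tidelta$-cocycle, observes it is $\delta$-closed, invokes Theorem~\ref{th:hdelta}, corrects by a $\tidelta$-coboundary whose leading term removes it, and lets completeness of the graded pieces handle convergence, with your $d_1$-computation $\tidelta(x)=-\frac{1}{2}[x,y]+\dots$ appearing there verbatim as the corrector for the degree-two class. The only wording to adjust is your inductive dichotomy ``vanishes or is a $\delta$-coboundary'': in the $\lie_2$ case the leading term $f_2=\frac{\alpha}{2}[x,y]$ is $\delta$-closed but \emph{not} $\delta$-exact (since $\delta$ vanishes on $\lie_1$, it represents the nontrivial class in $H^2(\lie,\delta)$), so the correcting coboundary there is $\tidelta(-\alpha x)$ rather than $\tidelta g_m$ with $\delta g_m=f_m$ --- which is exactly what your $d_1$ isomorphism $\k\,x\to\k\,[x,y]$ supplies and what the paper writes as $g=f+\alpha\,\tidelta(x)$.
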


\begin{proof}
For $H^1(\lie, \tidelta)$ we consider
$\tidelta(x)=x+y-\ch(x,y) \neq 0$ which implies
$H^1(\lie, \tidelta)={\rm ker}(\tidelta: \lie_1 \to \lie_2)=0$.
To compute $H^1(\tr, \delta)$, observe that 
$\tidelta(\Tr(x))=\Tr(x+y-\ch(x,y))=0$ (here we used that
$\Tr(a)=0$ for all $a \in \lie_n$ of degree greater or equal to two),
and $\tidelta \Tr(x^k)= \delta \Tr(x^k) + \dots \neq 0$ for $k\geq 2$
(here $\dots$ stand for the terms of degree greater than $k$).

In order to compute the second cohomology, let $f=\sum_{n=k}^\infty f_n$, 
where $f_n$ is homogeneous of degree $n$, and $f_k \neq 0$.
Then, $\tidelta f=\delta f_k + {\rm terms} \, \, {\rm of} \,\, 
{\rm degree} > k$, and $\tidelta f=0$ implies $\delta f_k=0$.

First, consider $f\in \lie_2$. In this case, $\delta f_k=0$ implies
$f_k=0$ for all $k$ except $k=2$. For $k=2$, we have $f_2(x,y)= \frac{\alpha}{2} [x,y]$ for
some $\alpha \in \k$. Define $g=f+\alpha (\tidelta x)=f+\alpha(x+y-\ch(x,y))$. We have
$\tidelta g=\tidelta f+\alpha \tidelta^2 x=0$, and $g_2(x,y)=0$. Hence,
$g=0$ and $f=-\alpha(x+y-\ch(x,y))=\tidelta(-\alpha x)$.

For $f\in \tr_2$, equation $\delta f_k=0$ implies $f_k=\delta h_k$ for some
$h_k \in \tr_1$. Consider $g=f-\tidelta h_k$. It satisfies $\tidelta g=0$,
and $g=\sum_{n=k+1}^\infty g_k$. In this way, we inductively construct $h \in \tr_1$
such that $g=\tidelta h$.
\end{proof}

\begin{remark}
For every $s\in \k$ one can introduce a differential $\tidelta_s$ by replacing
$\ch(x,y)$ with $\ch_s(x,y)$ in formula \eqref{eq:tidelta}. We have $\tidelta_1=\tidelta$
and $\tidelta_0=\delta$. Proposition \ref{prop:htidelta} applies to all $s \neq 0$.
Note that $H^1(\tr, \tidelta_s)=\k \Tr(x)$ and $H^2(\tr, \tidelta_s)=0$ for
all $s \in \k$ (including $s=0$).
\end{remark}

\section{Derivations of free Lie algebras}  \label{sec:derivations}

\subsection{Tangential and special derivations} \label{subsec:tangetial}
We shall denote by $\der_n$ the Lie algebra of derivations 
of $\lie_n$. An element $u \in \der_n$ is completely determined by its 
values on the generators, $u(x_1), \dots, u(x_n) \in \lie_n$. 
The Lie algebra $\der_n$ carries a grading induced by the one of $\lie_n$.

\begin{definition}
A derivation $u \in \der_n$ is called {\em tangential} if there exist
$a_i \in \lie_n, i=1, \dots, n$ such that $u(x_i)=[x_i, a_i]$. 
\end{definition}

Another way to define  tangential derivations is
as follows: for each $i=1,\dots,n$ there exists an inner derivation
$u_i$ such that $(u-u_i)(x_i)=0$.
We denote the subspace of tangential derivations by $\dert_n \subset \der_n$.

\begin{remark}  \label{rem:normalize}
Let $p_k: \lie_n \to \k$ be a projection which assigns to an element
$a=\sum_{k=1}^n \lambda_k x_k + \dots$, where $\dots$ stand for 
multibrackets, the coefficient $\lambda_k \in \k$.
Elements of $\dert_n$ are in one-to-one correspondence with $n$-tuples
of elements of $\lie_n$, $(a_1, \dots, a_n)$, which satisfy the condition
$p_k(a_k)=0$ for all $k$. Indeed, 
the kernel of the operator $\ad_{x_k}: a \mapsto [x_k, a]$ is exactly
$\k x_k$. Hence, an $n$-tuple $(a_1, \dots, a_n)$ defines a vanishing
derivation $u(x_k)=[x_k, a_k]=0$ if and only if $a_k \in \k x_k$ for all $k$.
By abuse of notations, we shall often write $u=(a_1, \dots, a_n)$.
\end{remark}

\begin{proposition}
Tangential derivations form a Lie subalgebra of $\der_n$.
\end{proposition}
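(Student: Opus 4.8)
The plan is to verify the two defining features of a Lie subalgebra: that $\dert_n$ is a linear subspace of $\der_n$, and that it is closed under the commutator bracket $[u,v] = u\circ v - v\circ u$. The linearity is immediate and I would dispose of it first: if $u,v \in \dert_n$ are given on generators by $u(x_i) = [x_i, a_i]$ and $v(x_i) = [x_i, b_i]$, then for any $\lambda \in \k$ one has $(u+\lambda v)(x_i) = [x_i, a_i + \lambda b_i]$, so $u + \lambda v$ is again tangential. Thus $\dert_n$ is a subspace, and the whole content lies in the bracket.

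For closure under the bracket, I would first recall that the commutator of two derivations is again a derivation of $\lie_n$, and that a derivation is completely determined by its values on the generators $x_1, \dots, x_n$. It therefore suffices to compute $[u,v](x_i)$ and to exhibit it in the tangential form $[x_i, c_i]$ for a suitable $c_i \in \lie_n$. Expanding by the Leibniz rule and substituting the values of $u$ and $v$ on the generators gives
\begin{align*}
[u,v](x_i) &= u([x_i, b_i]) - v([x_i, a_i]) \\
&= [[x_i, a_i], b_i] + [x_i, u(b_i)] - [[x_i, b_i], a_i] - [x_i, v(a_i)].
\end{align*}

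The key step, and really the only one that is not bookkeeping, is to collapse the two double-bracket terms into a single inner derivation by means of the Jacobi identity. Taking $A = x_i$, $B = a_i$, $C = b_i$, the identity in the form $[[A,B],C] - [[A,C],B] = [A,[B,C]]$ reads
$$
[[x_i, a_i], b_i] - [[x_i, b_i], a_i] = [x_i, [a_i, b_i]].
$$
Substituting this back, I obtain $[u,v](x_i) = [x_i, c_i]$ with $c_i = [a_i, b_i] + u(b_i) - v(a_i) \in \lie_n$, so $[u,v]$ is tangential. I do not expect a genuine obstacle here: the verification is elementary, the Jacobi rearrangement being its only nontrivial ingredient. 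As a useful by-product the same computation records the explicit bracket $(a_i,b_i) \mapsto [a_i,b_i] + u(b_i) - v(a_i)$ of the induced Lie algebra structure on tangential derivations, which will presumably be needed later.
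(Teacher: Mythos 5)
Your proof is correct and follows essentially the same route as the paper's: the same Leibniz expansion of $[u,v](x_i)$ followed by the Jacobi rearrangement, arriving at the identical formula $[u,v](x_i)=[x_i,\,u(b_i)-v(a_i)+[a_i,b_i]]$. Your closing remark is also borne out: the paper immediately records this same expression as the transported bracket on $n$-tuples, noting in addition that it has no linear terms and hence satisfies the normalization $p_k(c_k)=0$.
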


\begin{proof}
Let $u=(a_1, \dots, a_n)$ and $v=(b_1, \dots, b_n)$. 
We have
$$
\begin{array}{lll}
[u,v](x_k) &= & u([x_k, b_k])-v([x_k, a_k])  \\
 & = & [[x_k, a_k], b_k] + [x_k, u(b_k)] - [[x_k, b_k], a_k] - [x_k, v(a_k)] \\
 & = & [x_k, u(b_k)-v(a_k) + [a_k, b_k]]
\end{array}
$$
which shows $[u,v] \in \dert_n$.
\end{proof}

One can transport the Lie bracket of $\dert_n$ to the set of $n$-tuples
$(a_1, \dots, a_n)$ which satisfy the condition $p_k(a_k)=0$. Indeed, 
put the $k$th component of the new $n$-tuple equal to 
$u(b_k)-v(a_k) + [a_k, b_k]$. This expression does not contain linear
terms, and in particular it is in the kernel of $p_k$.

\begin{definition}
A derivation $u \in \dert_n$ is called {\em special} if 
$u(x)=0$ for $x=\sum_{i=1}^n x_i$. 
\end{definition}

We shall denote the space of special derivations of $\lie_n$ by $\sder_n$.
It is obvious that $\sder_n \subset \dert_n$ is a Lie subalgebra. 
Indeed, for  $u,v \in \sder_n$ we have $[u,v](x)=u(v(x))-v(u(x))=0$ 
and, hence, $[u,v] \in \sder_n$.

\begin{remark}
Ihara \cite{Ihara} calls elements of $\sder_n$ normalized
special derivations.
\end{remark}

\begin{example}
Consider $r=(y,0) \in \dert_2$. By definition, $r(x)=[x,y], r(y)=0$.
Note that $r(x+y)=[x,y] \neq 0$ and $r \notin \sder_2$. Consider another
element $t=(y,x) \in \dert_2$. We have $t(x)=[x,y], t(y)=[y,x]$ and
$t(x+y)=[x,y]+[y,x]=0$. Hence, $t\in \sder_2$.
\end{example}

\subsection{Simplicial and coproduct maps} \label{subsec:simplicial}
We shall need a number of Lie algebra homomorphisms mapping
$\dert_{n-1}$ to $\dert_n$. First, observe that the permutation group
$S_n$ acts on $\lie_n$ by Lie algebra automorphisms. 
For $\sigma \in S_n$, we have
$a \mapsto a^\sigma=a(x_{\sigma(1)}, \dots, x_{\sigma(n)})$.
The induced action on $\dert_n$ is given by formula,
$$
u=(a_1, \dots, a_n) \mapsto u^\sigma = 
(a_{\sigma^{-1}(1)}(x_{\sigma(1)}, \dots, x_{\sigma(n)}), \dots, 
a_{\sigma^{-1}(n)}(x_{\sigma(1)}, \dots, x_{\sigma(n)})).
$$

\begin{example}
For $u=(a(x,y), b(x,y)) \in \dert_2$ we have 
$u^{2,1}=(b(y,x),a(y,x))$, where $\sigma=(21)$ is the nontrivial
element of $S_2$. In the same fashion, for $u=(a(x,y,z),b(x,y,z),c(x,y,z)) 
\in \dert_3$ we have $u^{3,1,2}=(b(z,x,y),c(z,x,y),a(z,x,y))$.
\end{example}

We define {\em simplicial maps} by the following
property. For $u=(a_1, \dots, a_{n-1}) \in \dert_{n-1}$ define
$u^{1,2,\dots,n-1}=(a_1,\dots,a_{n-1},0)\in \dert_n$. It is clear that the
map $u \mapsto u^{1,2,\dots,n-1}$ is a Lie algebra homomorphism. 
We obtain other simplicial maps by composing with the action of 
$S_n$ on $\dert_n$. Simplicial maps restrict to special derivations.
Indeed, for $u \in \sder_{n-1}$ and $x=\sum_{i=1}^n x_i$ we compute
$$
u^{1,2,\dots,n-1}(x) = \sum_{i=1}^{n-1} [x_i, a_i]=0
$$
which implies $u^{1,2,\dots,n-1} \in \sder_n$.

\begin{example}
For $u=(a(x,y),b(x,y)) \in \dert_2$ we have
$u^{1,2}=(a(x,y),b(x,y),0) \in \dert_3$ and 
$u^{2,3}=(0,a(y,z),b(y,z))$. For instance, for
$r=(y,0)$ we obtain $r^{1,2}=(y,0,0), r^{2,3}=(0,z,0), r^{1,3}=(z,0,0)$.
\end{example}

\begin{proposition}   \label{prop:cybe}
The element $r=(y,0) \in \dert_2$ satisfies the classical
Yang-Baxter equation,
$$
[r^{1,2},r^{1,3}]+[r^{1,2},r^{2,3}]+[r^{1,3},r^{2,3}]=0.
$$
\end{proposition}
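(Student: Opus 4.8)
The plan is to verify the identity directly, exploiting the fact that a derivation of $\lie_3$ is completely determined by its values on the generators $x,y,z$. It therefore suffices to show that the derivation
$$
D = [r^{1,2},r^{1,3}]+[r^{1,2},r^{2,3}]+[r^{1,3},r^{2,3}]
$$
annihilates each of $x$, $y$, and $z$. First I would record the three embedded $r$-matrices explicitly as derivations: $r^{1,2}$ sends $x \mapsto [x,y]$ and kills $y,z$; $r^{1,3}$ sends $x \mapsto [x,z]$ and kills $y,z$; and $r^{2,3}$ sends $y \mapsto [y,z]$ and kills $x,z$. Since all three kill $z$, evaluating $D$ on $z$ gives $0$ term by term, because every composition $r^{i,j}(r^{k,l}(z))$ vanishes. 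Evaluating on $y$, only $r^{2,3}$ acts nontrivially, producing $[y,z]$, which is in turn annihilated by both $r^{1,2}$ and $r^{1,3}$ (each sends $y$ and $z$ to $0$); hence $D(y)=0$ as well.

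The only substantive computation is $D(x)$. Using the Leibniz rule for each derivation, I expect $[r^{1,2},r^{1,3}](x)=[[x,y],z]-[[x,z],y]$, while $[r^{1,2},r^{2,3}](x)=-[x,[y,z]]$ (the other composition vanishes since $r^{2,3}(x)=0$), and $[r^{1,3},r^{2,3}](x)=0$ (both compositions vanish because $r^{2,3}([x,z])=0$). Summing the three contributions gives $D(x)=[[x,y],z]-[[x,z],y]-[x,[y,z]]$, which is precisely the Jacobi identity for $x,y,z$ and therefore vanishes. Thus $D$ kills all generators, so $D=0$, which is the asserted classical Yang--Baxter equation.

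The computation presents no real obstacle; the only points requiring attention are the bookkeeping of which generators are sent to zero and the recognition that the surviving $x$-component is exactly a Jacobi relation. As an alternative bookkeeping device I could instead run the same verification through the $n$-tuple bracket formula $[u,v]_k = u(b_k)-v(a_k)+[a_k,b_k]$ established in the proof that tangential derivations form a Lie subalgebra. In that language $r^{1,2}=(y,0,0)$, $r^{1,3}=(z,0,0)$, $r^{2,3}=(0,z,0)$, and each of the three brackets has a single nonzero slot, the first component, yielding $[r^{1,2},r^{1,3}]=([y,z],0,0)$, $[r^{1,2},r^{2,3}]=(-[y,z],0,0)$, and $[r^{1,3},r^{2,3}]=(0,0,0)$; the three contributions to the first slot then cancel pairwise, giving $D=0$ once more.
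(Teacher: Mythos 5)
Your proposal is correct, and in fact it contains the paper's proof verbatim: the ``alternative bookkeeping device'' you sketch at the end --- computing $[r^{1,2},r^{1,3}]=([y,z],0,0)$, $[r^{1,2},r^{2,3}]=-([y,z],0,0)$, $[r^{1,3},r^{2,3}]=0$ via the tuple bracket formula $[u,v]_k=u(b_k)-v(a_k)+[a_k,b_k]$ and summing to zero --- is exactly the argument given in the paper. Your primary route, evaluating the derivation $D$ on each generator with the Leibniz rule, is an equivalent reformulation rather than a genuinely different proof; its one point of interest is that it makes visible where the Jacobi identity enters: the surviving term $D(x)=[[x,y],z]-[[x,z],y]-[x,[y,z]]$ vanishes precisely by Jacobi, whereas in the tuple formalism Jacobi is already absorbed into the derivation of the bracket formula, so the paper's three terms cancel with no visible appeal to it. All the individual evaluations you record ($D(z)=0$ term by term, $D(y)=0$ since $[y,z]$ is killed by $r^{1,2}$ and $r^{1,3}$, and the three contributions to $D(x)$) check out; the only cosmetic slip is the phrase ``cancel pairwise'' in the tuple version, where in fact two terms cancel and the third is identically zero.
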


\begin{proof}
We compute,
$$
[ r^{1,2} , r^{1,3} ] =  [(y,0,0),(z,0,0)] = ([y,z],0,0),
$$
$$
[ r^{1,2} , r^{2,3} ]  = [(y,0,0),(0,z,0)] = -([y,z],0,0),
$$
$$
[ r^{1,3} , r^{2,3} ] =  [(z,0,0),(0,z,0)] = 0.    
$$
Adding these expressions gives zero, as required.
\end{proof}

Next, consider $t=(y,x) \in \sder_2$. By 
composing various simplicial maps we obtain $n(n-1)/2$ elements
of $t^{i,j}=t^{j,i} \in \dert_n$ with non-vanishing components $x_i$ at the $j$th
place and $x_j$ at the $i$th place.

\begin{proposition}  \label{prop:tn}
Elements $t^{i,j} \in \sder_n$ span a Lie subalgebra isomorphic
to the quotient of the free Lie algebra with $n(n-1)/2$ generators
by the following relations,
\begin{equation} \label{eq:t1}
[ t^{i,j}, t^{k,l}] =0
\end{equation}
for $k,l \neq i,j$, and 
\begin{equation} \label{eq:t2}
[t^{i,j}+t^{i,k}, t^{j,k}]=0
\end{equation}
for all triples of distinct indices $i,j,k$.
\end{proposition}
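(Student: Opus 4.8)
The plan is to realize the stated quotient as the abstract Drinfeld--Kohno Lie algebra $\mathfrak{t}_n$ (the free Lie algebra on generators $t^{i,j}$, $1\le i<j\le n$, modulo \eqref{eq:t1} and \eqref{eq:t2}) and to prove that the natural map $\phi_n\colon\mathfrak{t}_n\to\sder_n$, $t^{i,j}\mapsto t^{i,j}=r^{i,j}+r^{j,i}$, is an isomorphism onto the span $\langle t^{i,j}\rangle$. First I would check that the relations hold, so that $\phi_n$ is well defined; it is then surjective by construction, and the whole difficulty is injectivity. Using the bracket formula $[u,v](x_k)=u(b_k)-v(a_k)+[a_k,b_k]$ (equivalently $[u,v]=u\circ v-v\circ u$ on generators), relation \eqref{eq:t1} is immediate because $t^{i,j}$ and $t^{k,l}$ act nontrivially only on the disjoint sets of generators $\{x_i,x_j\}$ and $\{x_k,x_l\}$, so each evaluates trivially against the other; relation \eqref{eq:t2} follows from a short computation on the three generators $x_i,x_j,x_k$ in which the Jacobi identity makes the surviving double brackets cancel.

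For injectivity I would induct on $n$. The base case $n=2$ is clear, since $\mathfrak{t}_2=\k\,t^{1,2}$ is one-dimensional and $t^{1,2}=(y,x)\ne 0$. For the inductive step the key device is the \emph{strand-forgetting} map $\pi\colon\sder_n\to\sder_{n-1}$ sending $u=(a_1,\dots,a_n)$ to $(\bar a_1,\dots,\bar a_{n-1})$, where $\bar a$ denotes the image of $a$ under the projection $\lie_n\to\lie_{n-1}$, $x_n\mapsto 0$. Since $x_n\mapsto 0$ is a Lie homomorphism intertwining the derivation $u$ with $\pi(u)$, the map $\pi$ is a homomorphism of Lie algebras; moreover $\pi(t^{i,n})=0$ and $\pi(t^{i,j})=t^{i,j}$ for $i,j<n$. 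Comparing with the abstract projection $p\colon\mathfrak{t}_n\to\mathfrak{t}_{n-1}$, whose kernel is, by the Drinfeld--Kohno structure theorem, the free Lie algebra $\mathfrak{f}_{n-1}$ on the classes of $t^{1,n},\dots,t^{n-1,n}$, one gets $\pi\circ\phi_n=\phi_{n-1}\circ p$. As $\phi_{n-1}$ is injective by induction, this forces $\ker\phi_n\subseteq\ker p=\mathfrak{f}_{n-1}$, and injectivity of $\phi_n$ is reduced to showing that $t^{1,n},\dots,t^{n-1,n}$ generate a \emph{free} Lie subalgebra of $\sder_n$.

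This freeness is the technical heart, and I expect it to be the main obstacle. The plan is to prove it by Lazard elimination: write $\lie_n=\lie(x_1,\dots,x_{n-1})\ltimes\mathfrak{h}$, where $\mathfrak{h}$ is the free Lie algebra on the set $B=\{\ad_{x_{j_1}}\cdots\ad_{x_{j_s}}(x_n)\}$. A direct check shows each $t^{i,n}$ preserves the ideal $\mathfrak{h}$ and that, on the abelianization $\mathfrak{h}/[\mathfrak{h},\mathfrak{h}]=\k B$, it acts by $\ad_{x_{j_1}}\cdots\ad_{x_{j_s}}(x_n)\mapsto-\ad_{x_{j_1}}\cdots\ad_{x_{j_s}}\ad_{x_i}(x_n)$. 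Consequently, evaluating a word $t^{i_1,n}\cdots t^{i_r,n}$ on $x_n$ and reducing modulo $[\mathfrak{h},\mathfrak{h}]$ produces $(-1)^r\,\ad_{x_{i_r}}\cdots\ad_{x_{i_1}}(x_n)$, that is, a single element of the free generating set $B$, and distinct words give distinct, hence linearly independent, such generators. Therefore the associative subalgebra of $\mathrm{End}(\lie_n)$ generated by the $t^{i,n}$ is free associative on them, and the Lie subalgebra they generate is accordingly free.

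Finally I would assemble the pieces: by freeness, the restriction of $\phi_n$ to $\mathfrak{f}_{n-1}$ carries a free generating set to a free system and is therefore injective, so that $\ker\phi_n\cap\mathfrak{f}_{n-1}=0$; combined with the inclusion $\ker\phi_n\subseteq\mathfrak{f}_{n-1}$ this yields $\ker\phi_n=0$, completing the induction and the proof. The one external input is the abstract Drinfeld--Kohno structure theorem identifying $\ker p$ with a free Lie algebra on the last-strand generators; everything else is a computation internal to $\sder_n$.
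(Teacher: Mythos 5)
Your proposal is correct, and while it shares the paper's skeleton (verify the relations, induct on $n$, invoke Drinfeld's structure theorem $\t_n \cong \t_{n-1} \oplus \lie(t^{1,n},\dots,t^{n-1,n})$ as the one external input), it handles the crucial step --- injectivity on the free ideal --- by a genuinely different mechanism. The paper isolates the free part by evaluating at $x_n$ (using $A'(x_n)=0$, where you instead use the strand-forgetting homomorphism $\pi$; these are essentially equivalent) and then disposes of it in one line via the claim $A''(x_n)=[x_n,\,a''(x_1,\dots,x_{n-1})]$, with $a''(x_1,\dots,x_{n-1})$ the substituted Lie word, so that $A''(x_n)=0$ forces $a''=0$. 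You instead prove outright that the operators $t^{1,n},\dots,t^{n-1,n}$ generate a free associative subalgebra of ${\rm End}(\lie_n)$, via Lazard elimination and the induced action on $\mathfrak{h}/[\mathfrak{h},\mathfrak{h}]$, and I have checked the details: $t^{i,n}$ does preserve $\mathfrak{h}$, the cross terms do land in $[\mathfrak{h},\mathfrak{h}]$ because that is an ideal of $\lie_n$, and distinct monomials do hit distinct basis vectors of $\k B$ up to sign, so freeness of the Lie subalgebra follows from the embedding of the free Lie algebra in its enveloping algebra. Your route is heavier, but it buys rigor at exactly the point where the paper is glib: the paper's displayed identity is literally true only modulo terms of positive degree in $x_n$. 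For instance, for $a''=[[t^{1,3},t^{2,3}],t^{1,3}]$ a direct computation gives the third component $[[x,y],x]+[x,[y,z]]+[y,[x,z]]$, not $[[x,y],x]$; the conclusion $a''=0$ still follows by extracting the lowest $x_n$-degree, and your ``reduce mod $[\mathfrak{h},\mathfrak{h}]$'' bookkeeping is precisely the systematic version of that leading-term argument. As a bonus, your argument proves slightly more than the proposition needs (a Kohno-type statement that the associative algebra generated by the $t^{i,n}$ is free), whereas the paper's evaluation trick, once corrected as above, is the more economical path to injectivity alone.
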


\begin{remark}
We denote by $\t_n$ the Lie algebra defined by relations \eqref{eq:t1} and~\eqref{eq:t2}. 
Note that $c=\sum_{i<j} t^{i,j}$ is a central element of $\t_n$. Indeed,
$[t^{i,j},c]=\sum_{k\neq i, k\neq j}[t^{i,j}, t^{i,k}+t^{j,k}]=0$.
It is known (see Section 5 of \cite{dr}) that 
$$
\t_n \cong \t_{n-1} \oplus \lie(t^{1,n}, \dots, t^{n-1,n}),
$$
where the free Lie algebra $\lie(t^{1,n}, \dots, t^{n-1,n})$ is an ideal 
in $\t_n$ and $\t_{n-1} \subset \t_n$ is a complementary Lie subalgebra spanned
by $t^{i,j}$ with $i,j<n$ . In particular, $\t_2 = \k t^{1,2}$ is an abelian 
Lie algebra with one generator, and 
$\t_3\cong \t_2 \oplus \lie(t^{1,3}, t^{2,3})$. In fact, ${\rm ad}_{t^{1,2}}$ is
an inner derivation of $\lie(t^{1,3}, t^{2,3})$,
$$
[t^{1,2}, a]=[t^{1,2}-c,a]=-[t^{1,3}+t^{2,3},a],
$$
and  $\t_3 \cong \k c \oplus \lie(t^{1,3}, t^{2,3})$. 
\end{remark}

\begin{proof}
First, we verify the relations \eqref{eq:t1} and \eqref{eq:t2}. The first one
is obvious since the derivations $t^{i,j}$ and $t^{k,l}$ act on different
generators of $\lie_n$. For the second one, we choose $n=3$ and compute
$[t^{1,2}+t^{1,3}, t^{2,3}]$:
$$
[t^{1,2}, t^{2,3}]  =  [(y,x,0),(0,z,y)]  =  (-[y,z],[x,z],[y,x]), 
$$
$$
[t^{1,3}, t^{2,3}] =  [(z,0,x),(0,z,y)]  =  (-[z,y],[z,x],[x,y]).
$$
Adding these expressions gives zero, as required. We obtain the relation
\eqref{eq:t2} for other values of $i,j,k$ by applying the $S_n$ action to
replace $1,2,3$ by $i,j,k$. Hence, the expressions
$t^{i,j}$ define a Lie algebra homomorphism from $\t_n$ to $\sder_n$.
We prove that it is injective by induction. Clearly, the map 
$\t_2=\k t^{1,2} \to \sder_2$ is injective. Assume that the
Lie homomorphism $\t_{n-1} \to \dert_{n-1}$ is injective. Let $a \in \t_n$,
$a=a'+a''$, where $a' \in \t_{n-1}$ and $a'' \in 
\lie(t^{1,n}, \dots, t^{n-1,n})$. We denote by $A'$ and $A''$
their images in $\sder_{n}$. Observe that $A'(x_n)=0$ since 
$A'$ is a derivation acting only on generators $x_1, \dots, x_{n-1}$.
It is easy to check that  $A''(x_n)=[x_n, a''(x_1,\dots, x_{n-1})]$,
where $a''(x_1,\dots, x_{n-1})$ is obtained by replacing
the generators $t^{i,n}$ by $x_i$ in $a''(t^{1,n}, \dots, t^{n-1,n})$.
Assuming $A=A'+A''=0$, we have $A(x_n)=0$ which implies $A''(x_n)=0$ and  
$a''=0$. Then, $a=a' \in \t_{n-1}$ and
$A=0$ implies $a=0$ by the induction hypothesis.
\end{proof}

\begin{proposition}
The element $c= \sum_{i<j} t^{i,j}$ belongs to the center of $\sder_n$.
\end{proposition}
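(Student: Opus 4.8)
The plan is to identify the derivation $c$ completely explicitly and then recognize it as an inner derivation, at which point specialness does all the work. First I would read off the components of $c$ as a tangential derivation. Since $t^{i,j}$ has $x_j$ in its $i$th slot and $x_i$ in its $j$th slot and zeros elsewhere, summing $c=\sum_{i<j}t^{i,j}$ shows that the $k$th component of $c$ is $\sum_{j\neq k}x_j=x-x_k$, where $x=\sum_{i=1}^n x_i$. Hence
$$
c(x_k)=[x_k,\,x-x_k]=[x_k,x]
$$
for every generator, using $[x_k,x_k]=0$. (Incidentally this also confirms $c\in\sder_n$: the component $x-x_k$ has no $x_k$-term, so $p_k(x-x_k)=0$, and $c(x)=[x,x]=0$ so $c$ is special.)

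Next I would observe that this identifies $c$ with the inner derivation $\ad_{-x}\colon a\mapsto[-x,a]$, because $\ad_{-x}(x_k)=[-x,x_k]=[x_k,x]=c(x_k)$ and a derivation of $\lie_n$ is determined by its values on the generators. The crucial tool is then the elementary commutator identity between an arbitrary derivation and an inner derivation: for any $u\in\der_n$ and any $a\in\lie_n$ the Leibniz rule gives
$$
[u,\ad_a](b)=u([a,b])-[a,u(b)]=[u(a),b]=\ad_{u(a)}(b),
$$
that is $[u,\ad_a]=\ad_{u(a)}$. Applying this with $a=-x$ yields, for every $u\in\sder_n$,
$$
[u,c]=[u,\ad_{-x}]=\ad_{-u(x)}.
$$
But the defining property of a special derivation is exactly $u(x)=0$, so $[u,c]=\ad_0=0$, and therefore $c$ commutes with every element of $\sder_n$.

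There is essentially no serious obstacle here: the only computation is the bookkeeping that collapses $\sum_{i<j}t^{i,j}$ to the inner derivation $\ad_{-x}$, and the whole argument rests on the single observation that specialness means annihilating $x$. I would emphasize that this genuinely strengthens the earlier remark, where $c$ was shown to be central only inside the subalgebra $\t_n$ spanned by the $t^{i,j}$; the inner-derivation mechanism now gives centrality in the full Lie algebra $\sder_n$ of special derivations.
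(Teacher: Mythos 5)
Your proof is correct, and it finishes differently from the paper's. Both arguments begin with the same key identification: the $k$th component of $c$ is $x-x_k$, so $c(x_k)=[x_k,x]$ and $c$ is the inner derivation $a\mapsto[a,x]$. The paper then verifies centrality by brute force in the $n$-tuple encoding of tangential derivations: for $u=(a_1,\dots,a_n)\in\sder_n$ it computes the $k$th component of $[c,u]$, namely $c(a_k)-u\bigl(\sum_{i\neq k}x_i\bigr)+\bigl[\sum_{i\neq k}x_i,a_k\bigr]$, and shows it collapses to $[a_k,x]+[x,a_k]=0$ using $u(x)=0$. You instead invoke the general Leibniz identity $[u,\ad_a]=\ad_{u(a)}$, valid for any $u\in\der_n$ and $a\in\lie_n$, so that $[u,c]=\ad_{-u(x)}=0$ immediately from the defining property $u(x)=0$ of special derivations. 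Your route is more conceptual and buys something extra for free: it shows $c$ centralizes not just $\sder_n$ but the entire stabilizer of $x$ inside $\der_n$, and it makes transparent that the tuple bookkeeping in the paper is just this one identity unpacked componentwise. The paper's computation, on the other hand, stays entirely inside the tuple formalism it has set up for $\dert_n$, which is the formalism used throughout the rest of the article. One small point worth keeping explicit, as you did: the bracket on $\sder_n$ is the restriction of the commutator of derivations, so there is no discrepancy between the tuple bracket and the bracket you compute with $[u,\ad_{-x}]$.
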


\begin{proof}
First, note that $c(x_i)=\sum_{j\neq i}[x_i,x_j]=[x_i,x]$ for 
$x=\sum_{j=1}^n x_j$. Hence,
$c$ is an inner derivation, and for any $a \in \lie_n$ we have $c(a)=[a,x]$.
Let $u=(a_1, \dots, a_k)\in \sder_n$ and compute the $k$th component of the bracket $[c,u]$: 
$$
\begin{array}{lll}
c(a_k) - u(\sum_{i\neq k} x_i) + \sum_{i\neq k}[x_i,a_k]
& = & [a_k, x] + u(x_k) + \sum_{i\neq k}[x_i,a_k] \\
& = & [a_k, x] + [x_k,a_k] + \sum_{i\neq k}[x_i,a_k] \\
& = & [a_k,x] + [x, a_k]=0 .
\end{array}
$$
Here we have used that $u(x)=0$ for $u\in \sder_n$.
\end{proof}

Another family of Lie algebra homomorphisms $\dert_{n-1} \to \dert_n$
is given by {\em coproduct maps}. For $u=(a_1, \dots, a_{n-1}) \in \dert_{n-1}$
we define
$$
\begin{array}{lll}
u^{12,3,\dots,n} & = & (a_1(x_1+x_2,x_3, \dots, x_n), \\
& &                     a_1(x_1+x_2,x_3, \dots, x_n),\\
& &                      a_2(x_1+x_2,x_3, \dots, x_n), \\
& &                      \dots, \\
& &                      a_{n-1}(x_1+x_2,x_3, \dots, x_n)).
\end{array}
$$
Other coproduct maps are obtained by using the action of the permutation
groups on $\dert_{n-1}$ and on $\dert_n$.

\begin{example}
For $n=2$ and $u=(a(x,y),b(x,y))$ we have
$u^{12,3}=(a(x+y,z),a(x+y,z),b(x+y,z))$ and 
$u^{1,23}=(a(x,y+z),b(x,y+z),b(x,y+z))$.
\end{example}

Coproduct maps $\dert_{n-1} \to \dert_n$ are Lie algebra homomorphisms.
Let $u=(a,b) \in \dert_2$ and compute
$u^{12,3}(x+y)=[x+y,a(x+y,z)]$ and $u^{12,3}(z)=[z,b(x+y,z)]$.
Hence, for any $f \in \lie_2$ we obtain $u^{12,3}(f(x+y,z))=(u(f))(x+y,z)$.
For $u=(a_1,b_1), v=(a_2,b_2) \in \dert_2$ we compute
$[u^{12,3},v^{12,3}]=(c_1,c_2,c_3)$ where
$$
\begin{array}{lll}
c_1=c_2 & = & u^{12,3}(a_2(x+y,z))-v^{12,3}(a_1(x+y,z))+[a_1(x+y,z), a_2(x+y,z)] \\
 & = & (u(a_2)-v(a_1)+[a_1,a_2])(x+y,z), \\
c_3 & = & u^{12,3}(b_2(x+y,z))-v^{12,3}(b_1(x+y,z))+[b_1(x+y,z), b_2(x+y,z)] \\
& = & (u(b_2)-v(b_1)+[b_1,b_2])(x+y,z).
\end{array}
$$
Hence, $[u^{12,3},v^{12,3}]=[u,v]^{12,3}$.
Coproduct maps restrict to Lie subalgebras of special derivations.
For $u \in \sder_{n-1}$ and $x=\sum_{i=1}^n x_i$ we compute
$$
u^{12,3,\dots,n}(x) =
[x_1+x_2, a_1(x_1+x_2, \dots, x_n)]+ \dots + [x_n, a_{n-1}(x_1+x_2, \dots, x_n)]
=0
$$
which implies $u^{12,3,\dots,n} \in \sder_n$.

\begin{example}
For $r=(y,0) \in \dert_2$ we have $r^{12,3}=(z,z,0)=r^{1,3}+r^{2,3}$
and $r^{1,23}=(y+z,0,0)=r^{1,2}+r^{1,3}$. Similarly, for $t=(y,x)\in \dert_2$
we have $t^{12,3}=(z,z,x+y)=t^{1,3}+t^{2,3}$ and
$t^{1,23}=(y+z,x,x)=t^{1,2}+t^{1,3}$.
\end{example}

Let $u=(a_1,b_1) \in \sder_2$ and $v=(a_2,b_2) \in \dert_2$.
Then, $[u^{1,2},v^{12,3}]=0$. Indeed, note that
$u^{1,2}$ acts by zero on $\lie(x+y,z)$ and $v^{12,3}$
acts as an inner derivation with generator $a_2(x+y,z)$
on $\lie(x,y)$. We compute 
$$
\begin{array}{lll}
[u^{1,2},v^{12,3}](x) & = & u^{1,2}([x,a_2(x+y,z)])-v^{12,3}([x,a_1(x,y)]) \\
& = & [[x,a_1(x,y)],a_2(x+y,z)] - [[x,a_1(x,y)],a_2(x+y,z)]=0,
\end{array}
$$
and similarly $[u^{1,2},v^{12,3}](y)=0$. Finally,
$[u^{1,2},v^{12,3}](z)=u^{1,2}([z,b_2(x+y,z)])=0$.
In general, for $u\in \sder_n, v\in \dert_{m+1}$ we have
$[u^{1,2,\dots,n}, v^{12\dots n,n+1,\dots,n+m}]=0$.

\subsection{Cohomology}  \label{subsec:dcoh}
We define a differential $\d : \dert_{n} \to \dert_{n+1}$ by formula,
$$
\d u =u^{2,3,\dots,n+1} - u^{12,\dots,(n-1),n} + \dots +
(-1)^{n} u^{1,2,\dots,(n-1)n} + (-1)^{n+1} u^{1,2,\dots,n} .
$$
It is easy to check that $\d$ squares to zero, $\d^2=0$.

\begin{example}
For $u \in \dert_2$ we get
$\d u= u^{2,3}-u^{12,3}+u^{1,23}-u^{1,2}$. For $u\in \dert_3$
we obtain $\d u = u^{2,3,4}-u^{12,3,4}+u^{1,23,4}-u^{1,2,34}+u^{1,2,3}$.
\end{example}

We shall compute the cohomology groups
$$
H^n(\dert, \d)={\rm ker}(\d: \dert_n \to \dert_{n+1})/{\rm im}(\d: \dert_{n-1} \to \dert_n)
$$
for $n=2,3$.

\begin{theorem}   \label{th:hd}
$$
\begin{array}{lll}
H^2(\dert, \d) & =& {\rm ker}(\d: \dert_2 \to \dert_{3}) = \k r \oplus \k t , \\
H^3(\dert, \d) & \cong & \k [(0,[z,x],0)],
\end{array}
$$
where $r=(y,0), t=(y,x)$.
\end{theorem}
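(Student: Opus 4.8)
The plan is to compute both statements degree by degree, using that $\d$ is homogeneous (the simplicial and coproduct maps preserve the grading of $\lie_n$), and that a Lie series in a single generator is concentrated in degree $1$, so any homogeneous element of degree $\geq 2$ is annihilated by setting all but one generator to zero.

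For $H^2$ I would first note $\dert_1=0$, since its single component $a_1$ lies in $\k x_1$ and is killed by the normalization $p_1(a_1)=0$; hence $H^2=\ker(\d\colon\dert_2\to\dert_3)$. In degree $1$ the space $\dert_2$ is two dimensional, spanned by $(y,0)$ and $(0,x)$, i.e. by $r$ and $t-r$, and a direct evaluation shows $\d r=\d t=0$, so the whole degree-$1$ part is $\k r\oplus\k t$. For a homogeneous cocycle $u=(a,b)$ of degree $k\geq 2$ I write $\d u=(D_1,D_2,D_3)$ and observe that $D_1$ depends only on $a$ and $D_3$ only on $b$. Setting $x=0$ in $D_1=0$ leaves $a(y,z)=0$ (the remaining terms $a(0,\cdot)$ are one-generator series of degree $\geq 2$, hence zero), so $a=0$; symmetrically $z=0$ in $D_3=0$ gives $b=0$. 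Thus $\ker\d$ is exhausted by degree $1$ and equals $\k r\oplus\k t$.

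For $H^3$ I would first verify by direct substitution that $w=(0,[z,x],0)$, which is homogeneous of degree $2$, satisfies $\d w=0$. For a homogeneous cocycle $u=(a,b,c)$ I write $\d u=(D_1,D_2,D_3,D_4)$ and use that the outer components are twisted versions of the differential \eqref{eq:delta}: $D_1=0$ reads $(\delta a)(x_1,\dots,x_4)=a(x_2,x_3,x_4)$ and depends only on $a$, and likewise $D_4=0$ depends only on $c$. Substituting $x_1=0$ in $D_1=0$ expresses $a$ in coboundary form $a(x,y,z)=\bar a(x+y,z)-\bar a(x,y+z)+\bar a(x,y)$ with $\bar a(y,z)=a(0,y,z)$, which is exactly the first component of $\d(-\bar a,0)$; subtracting this coboundary I may assume $a=0$. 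The symmetric manipulation of $D_4=0$ produces a coboundary $\d(0,q)$ removing $c$ without disturbing $a$. There is no residual freedom: a coboundary preserving the shape $(0,\ast,0)$ must have vanishing first and third components, which by the degree-$\geq 2$ argument above forces it to vanish. So in degree $\geq 2$ every class is uniquely represented by some $(0,b,0)$.

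It remains to decide which $b$ give cocycles. With $a=c=0$ the equations $D_1=D_4=0$ hold automatically and $D_2=D_3=0$ become
\begin{align}
&b(x_1,x_2+x_3,x_4)-b(x_1,x_2,x_3+x_4)+b(x_1,x_2,x_3)=0,\label{plan:A}\\
&b(x_2,x_3,x_4)-b(x_1+x_2,x_3,x_4)+b(x_1,x_2+x_3,x_4)=0.\label{plan:B}
\end{align}
Setting $x_3=0$ in \eqref{plan:B} gives $b(x,y,z)=\gamma(x+y,z)-\gamma(y,z)$ with $\gamma(u,s)=b(u,0,s)$, and this form conversely solves \eqref{plan:B}; feeding it into \eqref{plan:A} with $x_2=0$ collapses precisely to $(\delta\gamma)(x_1,x_3,x_4)=0$, and one checks $\delta\gamma=0$ is in turn sufficient for \eqref{plan:A}. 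Thus $b\mapsto\gamma$ identifies the degree-$k$ cocycles of shape $(0,b,0)$ with $\ker(\delta\colon\lie_2\to\lie_3)$, which by Theorem~\ref{th:hdelta} is $\k[x,y]$, concentrated in degree $2$; there $\gamma=\lambda[u,s]$ yields $b=\lambda[x,z]$, a multiple of $w$. Hence $H^3$ vanishes in degrees $\geq 3$ and is the line $\k[(0,[z,x],0)]$ in degree $2$. The degree-$1$ part, a three-term complex of dimensions $2,6,12$, is handled by a direct finite computation giving $H^3=0$ there. The main obstacle I anticipate is the bookkeeping of the coproduct maps in the four components of $\d u$, and above all spotting that after trivializing $a$ and $c$ the residual constraints on $b$ are equivalent to the single $\delta$-cocycle equation for $\gamma$, which is what lets Theorem~\ref{th:hdelta} close the argument.
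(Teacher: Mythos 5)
Your proposal is correct and follows essentially the same route as the paper: both arguments reduce a cocycle, modulo coboundaries in the image of $\d: \dert_2 \to \dert_3$, to the shape $(0,b,0)$, then pass via the substitution $\gamma(u,s)=b(u,0,s)$ (the paper's $h(x,y)=\tilde{b}(x,0,y)$) to the four-term equation \eqref{eq:4term}, whose solution space $\k [x,y]$ from Theorem \ref{th:hdelta} yields the class of $(0,[z,x],0)$ in degree two, while the $H^2$ computation is the same one-generator specialization in both. Your execution is marginally cleaner at two points --- the specializations $x_1=0$ and $x_4=0$ trivialize $a$ and $c$ in one step, avoiding the paper's residual terms $k(x+y,z)$ and $l(x,y+z)$ that must then be killed separately, and your uniqueness-of-representative observation replaces the paper's explicit computation of the degree-two image of $\d$ --- whereas the degree-one part, which you delegate to a finite linear-algebra check (which does give $H^3=0$ there, since $\d r=\d t=0$ makes the degree-one image vanish), is absorbed into the paper's uniform argument.
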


\begin{proof}
Since $\dert_1=0$, we have $H^2(\dert, \d)= {\rm ker}(\d: \dert_2 \to \dert_{3})$. 
Let $u=(a,b) \in \dert_2$, and consider $\d u=u^{2,3}-u^{12,3}+u^{1,23}-u^{1,2}$.
Equation $\d u=0$ reads
$$
\begin{array}{lllllllll}
       & - & a(x+y,z) & + & a(x,y+z) & - & a(x,y) & = & 0, \\
a(y,z) & - & a(x+y,z) & + & b(x,y+z) & - & b(x,y) & = & 0, \\
b(y,z) & - & b(x+y,z) & + & b(x,y+z) &   &        & = & 0.
%
\end{array}
$$
Put $x=0$ in the first equation to get $a(y,z)=a(0,y+z)-a(0,y)=\alpha z$.
In the same way, put $z=0$ in the third equation to obtain
$b(x,y)=b(x+y,0)-b(y)=\beta x$. All three equations are satisfied by
$u=(\alpha y, \beta x) = (\alpha - \beta)r + \beta t$ for all $\alpha, \beta \in \k$. Hence,
${\rm ker}(\d: \dert_2 \to \dert_{3}) = \k r \oplus \k t$

In order to compute $H^3(\dert, \d)$ we put $u=(a,b,c) \in \dert_3$ and
write $\d u= u^{2,3,4}-u^{12,3,4}+u^{1,23,4}-u^{1,2,34}+u^{1,2,3}$. Equation
$\d u=0$ yields
$$
\begin{array}{lllllll}
         & -a(x+y,z,w) & +a(x,y+z,w) & -a(x,y,z+w) & +a(x,y,z) & = & 0, \\
a(y,z,w) & -a(x+y,z,w) & +b(x,y+z,w) & -b(x,y,z+w) & +b(x,y,z) & = & 0, \\
b(y,z,w) & -b(x+y,z,w) & +b(x,y+z,w) & -c(x,y,z+w) & +c(x,y,z) & = & 0, \\
c(y,z,w) & -c(x+y,z,w) & +c(x,y+z,w) & -c(x,y,z+w) &           & = & 0, \\
%
\end{array}
$$
Make a substitution $x \mapsto x, y \mapsto -x, z\mapsto x+y, w \mapsto z$
in the first equation to get
$$
a(x,y,z)=a(x,-x,x+y+z)-a(x,-x,x+y)+a(0,x+y,z).
$$
Let $f(x,y)=-a(x,-x,x+y)$ and $k(x,y)=a(0,x,y)-f(x,y)$ to get the following
expression for $a$,
$$
a(x,y,z)=f(x,y)-f(x,y+z)+f(x+y,z) + k(x+y,z).
$$
In the same fashion, putting $x \mapsto y, y\mapsto z+w, z\mapsto -w, w\mapsto w$
in the forth equation gives
$$
c(y,z,w)=c(y+z+w,-w,w)-c(z+w,-w,w)+c(y,z+w,0).
$$
By letting $g(z,w)=-c(z+w,-w,w)$ and $l(z,w)=c(z,w,0)+g(z,w)$ we obtain
$$
c(y,z,w)=-g(y,z+w)+g(y+z,w)-g(z,w) + l(y,z+w).
$$
Consider $\tilde{u}=(\tilde{a}, \tilde{b}, \tilde{c})=u+\d(f,g)$. It satisfies
$\d \tilde{u}=0$ and it has $\tilde{a}(x,y,z)=k(x+y,z)$ and $\tilde{c}(x,y,z)=l(x,y+z)$.
The first equation (for $\tilde{a}$) implies $k(x+y,z)=k(x+y,z+w)$ which forces
$k=0$ (since $\tilde{a}$ does not contain terms linear in $x$).
In the same way, the forth equation yields $l(x+y,z+w)=l(y,z+w)$ which implies $l=0$.
Hence, $\tilde{u}=(0,\tilde{b},0)$. Denote $h(x,y)=\tilde{b}(x,0,y)$ and
first put $y=0$ in the third equation to get $\tilde{b}(x,z,w)=h(x,z+w)-h(x,z)$,
then put $z=0$ to obtain $\tilde{b}(x,y,w)=h(x+y,w)-h(y,w)$.
These two equations imply
$$
h(x,y)-h(x,y+w)+h(x+y,w)-h(y,w)=0,
$$
and, by Theorem \ref{th:hdelta}, $h(x,y)=\gamma [x,y]$ for some $\gamma \in \k$.
This implies $\tilde{b}(x,y,z)=\gamma [x,y+z]- \gamma [x,y]=\gamma[x,z]$.
It is easy to check that $\tilde{u}=(0, \gamma [x,z], 0)$ verifies $\d \tilde{u}=0$.
Finally, in degree two, ${\rm im}(\d: \dert_2 \to \dert_3)$ is spanned by 
$$
\d (\alpha [x,y], \beta [x,y])=(-\alpha [y,z], (\alpha-\beta) [z,x], \beta [x,y]),
$$
and $(0, \gamma [x,z], 0) \notin {\rm im}(\d: \dert_2 \to \dert_3)$ for $\gamma \neq 0$.
\end{proof}

\subsection{Cocycles in $\tr_n$} \label{subsec:cocycle}
The action of $\der_n$ extends from $\lie_n$ to $\Ass_n$
and descends to the graded vector space $\tr_n$. For $u \in \der_n$ and
$a \in \tr_n$ we denote this action by $u \cdot a \in \tr_n$.

\begin{example}
Let $r=(y,0) \in \dert_2$, and $a=\Tr(xy) \in \tr_2$. We compute
$r\cdot a=\Tr(r(x)y +xr(y))=\Tr([x,y]y)=\Tr((xy-yx)y)=0$.
\end{example}

 We shall be
interested in 1-cocycles on the subalgebra $\dert_n$ with values in 
$\tr_n$. That is, we are looking for linear maps $\alpha: \dert_n \to \tr_n$ such that
$$
u\cdot \alpha(v)- v\cdot \alpha(u)-\alpha([u,v])=0
$$
for all $u,v \in \dert_n$. 

\begin{proposition}
For all $k=1, \dots, n$ the map $\alpha: u=(a_1, \dots, a_n) \mapsto \Tr(a_k)$
is a 1-cocycle.
\end{proposition}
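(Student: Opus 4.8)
The plan is to reduce everything to the explicit bracket formula on $\dert_n$ together with the compatibility of the derivation action with the trace projection; the verification is then purely formal and the only input one really needs is that $\Tr$ kills commutators.

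First I would write $u=(a_1,\dots,a_n)$ and $v=(b_1,\dots,b_n)$ and recall, from the proof that $\dert_n$ is a Lie subalgebra, that the $k$th component of $[u,v]\in\dert_n$ equals $u(b_k)-v(a_k)+[a_k,b_k]$. Consequently $\alpha([u,v])=\Tr\bigl(u(b_k)-v(a_k)+[a_k,b_k]\bigr)$.

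Next I would invoke the fact that the $\der_n$-action on $\tr_n$ is induced by the derivation action on $\Ass_n$ and descends to $\tr_n$, so that $w\cdot\Tr(c)=\Tr(w(c))$ for every $w\in\der_n$ and every $c\in\lie_n$. Taking $w=u$, $c=b_k$ and $w=v$, $c=a_k$ gives $u\cdot\alpha(v)=\Tr(u(b_k))$ and $v\cdot\alpha(u)=\Tr(v(a_k))$.

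Substituting these into the cocycle expression and using the linearity of $\Tr$, the terms $\Tr(u(b_k))$ and $\Tr(v(a_k))$ cancel against the corresponding terms in $\alpha([u,v])$, leaving exactly $-\Tr([a_k,b_k])$. By the very definition of $\tr_n$ as the quotient of $\Ass_n^+$ by commutators, $\Tr([a_k,b_k])=0$, so the whole expression vanishes and $\alpha$ is a $1$-cocycle. There is no real obstacle here: the computation is entirely formal, and the only point to keep straight is that the single surviving term is a commutator and hence annihilated by $\Tr$.
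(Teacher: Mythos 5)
Your proof is correct and takes essentially the same route as the paper: both verifications are formal computations whose only real input is that $\Tr$ annihilates commutators. The sole cosmetic difference is that the paper notes each of the three terms vanishes individually (since $u(b_k)$, $v(a_k)$ and $[a_k,b_k]$ are Lie elements of degree at least two, hence sums of commutators killed by $\Tr$), whereas you cancel $\Tr(u(b_k))$ and $\Tr(v(a_k))$ pairwise and then kill the surviving term $\Tr([a_k,b_k])$.
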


\begin{proof}
Note that $\alpha$ vanishes on all elements of degree
greater or equal to two. Hence, $\alpha([u,v])=0$ for
all $u,v \in \dert_n$. Let $u=(a_1, \dots, a_n)$ and $v=(b_1, \dots, b_n)$.
Then, $u \cdot \alpha(v)=u\cdot \Tr(b_k)=\Tr(u(b_k))=0$
since $u(b_k)$ is of degree at least two, and similarly
$v \cdot \alpha(u)=\Tr(v(a_k))=0$. 
\end{proof}

\begin{proposition}
The map $\dv: u=(a_1, \dots, a_n) \mapsto \sum_{k=1}^n \Tr(x_k (\partial_k a_k))$
is a 1-cocycle.
\end{proposition}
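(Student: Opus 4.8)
The goal is to show that the divergence map $\dv(u) = \sum_{k=1}^n \Tr(x_k(\partial_k a_k))$ satisfies the cocycle identity $u\cdot \dv(v) - v\cdot \dv(u) - \dv([u,v]) = 0$ for all $u = (a_1,\dots,a_n)$ and $v = (b_1,\dots,b_n)$ in $\dert_n$. The plan is to compute each of the three terms directly in $\tr_n$, exploiting the cyclic invariance of the trace and the Leibniz rule for the partial derivatives $\partial_k$ defined in \eqref{eq:partial}. The main tool will be understanding how the derivation action $u\cdot(-)$ interacts with the decomposition $a = a_0 + \sum_k (\partial_k a)x_k$.

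First I would establish a Leibniz-type rule for $\partial_k$ under the action of a derivation. Since $u$ acts on $\Ass_n$ by $u(x_j) = [x_j, a_j]$ and extends as a derivation, I would compute $\partial_k(u(c))$ for $c \in \Ass_n$ in terms of the $\partial_j c$ and the $a_j$. The key identity to extract is a formula expressing $\partial_k(u(c))$ as $u(\partial_k c)$ plus correction terms coming from the places where the derivation hits the trailing generator $x_k$ in the decomposition \eqref{eq:partial}; these corrections will involve $\partial_j a_k$ and products like $(\partial_j c)a_j$. This bookkeeping is the technical heart of the argument.

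Next I would insert this into the definition. The term $u\cdot \dv(v) = \sum_k \Tr(x_k \, \partial_k(u(b_k)) + x_k \, (\partial_k b_k)\text{-type contributions})$ must be expanded carefully, and similarly for $v\cdot \dv(u)$; then $\dv([u,v])$ is computed from the bracket formula $[u,v]_k = u(b_k) - v(a_k) + [a_k,b_k]$ derived earlier in the section. The strategy is to show that when all three expressions are written out and the trace's cyclic property $\Tr(pq)=\Tr(qp)$ is applied, the terms organize into pairs that cancel, so that the net result is zero. I expect that the genuinely derivation-internal pieces (the $u(\partial_k b_k)$ against $\dv$ of $u(b_k)$) match up, while the cross terms generated by hitting $x_k$ recombine precisely with the contribution of $[a_k,b_k]$ in $\dv([u,v])$.

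The main obstacle will be the careful tracking of the $\partial_k$-correction terms: because $\partial_k$ is only defined through the right-most-letter decomposition \eqref{eq:partial}, the derivation action and $\partial_k$ do not commute, and one must account for every position in a word where the trailing $x_k$ can be produced or destroyed. The payoff of the cyclic trace is exactly that these position-dependent terms become symmetric under $\Tr$ and cancel; verifying that no leftover term survives — in particular checking the antisymmetry between the $u\cdot\dv(v)$ and $v\cdot\dv(u)$ contributions — is where the computation must be done with care. Once the Leibniz rule for $\partial_k$ is pinned down cleanly, the cancellation should be a matter of systematic rearrangement rather than any deeper input.
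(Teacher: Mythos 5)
Your plan is essentially the paper's own proof: expand $\dv([u,v])$ via the bracket formula $[u,v]_k = u(b_k)-v(a_k)+[a_k,b_k]$, use the decomposition $b_k=\sum_i(\partial_i b_k)x_i$ to track how $\partial_k$ interacts with the derivation action, and cancel everything by cyclicity of $\Tr$. One small correction to your predicted bookkeeping: the position-dependent cross terms $\Tr\bigl(x_k(\partial_i b_k)x_i(\partial_k a_i)\bigr)$ cancel against their $u\leftrightarrow v$ mirrors after a cyclic rotation and relabeling of indices, while it is the corrections $-(\partial_k b_k)a_k$ and $(\partial_k a_k)b_k$ that combine with $\partial_k[a_k,b_k]=a_k(\partial_k b_k)-b_k(\partial_k a_k)$ to match the commutator pieces $\Tr\bigl([x_k,a_k](\partial_k b_k)\bigr)-\Tr\bigl([x_k,b_k](\partial_k a_k)\bigr)$ in $u\cdot\dv(v)-v\cdot\dv(u)$, rather than the cross terms recombining with $[a_k,b_k]$ as you anticipated.
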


\begin{proof}
On the one hand, we get
$$
\begin{array}{lll}
u\cdot \dv(v) - v\cdot \dv(u) & = &
\sum_{k=1}^n \Tr \left( u(x_k (\partial_k b_k)) - v(x_k (\partial_k a_k))  \right) \\
& = & 
\sum_{k=1}^n \Tr ( [x_k,a_k](\partial_k b_k) + x_k u(\partial_k b_k) \\
& - &
[x_k, b_k] (\partial_k a_k)- x_k v(\partial_k a_k)) .
\end{array}
$$
On the other hand, we obtain,
$$
\begin{array}{lll}
\dv([u,v]) & = & \sum_{k=1}^n \Tr ( x_k \partial_k(u(b_k)-v(a_k)+[a_k,b_k]) ) \\
& = & \sum_{k=0}^n \Tr (x_k \partial_k (u(\sum_{i=1}^n (\partial_i b_k) x_i)
 -  v(\sum_{j=1}^n (\partial_j a_k) x_j) +[a_k, b_k]) ) \\
& = & \sum_{k=0}^n \Tr (x_k \partial_k (\sum_{i=1}^n (u(\partial_i b_k) x_i + 
(\partial_i b_k) [x_i, a_i]) \\
& - & \sum_{j=1}^n (v(\partial_j a_k) x_j + (\partial_j a_k) [x_j, a_j]) +
[a_k, b_k]) ) \\
& = & \sum_{k=0}^n \Tr (x_k (u(\partial_k b_k) - (\partial_k b_k) a_k +
\sum_{i=1}^n (\partial_i b_k) x_i (\partial_k a_i) \\
& - & v(\partial_k a_k) + (\partial_k a_k) b_k - \sum_{j=1}^n (\partial_j a_k) x_j
(\partial_k b_j) + a_k (\partial_k b_k) - b_k(\partial_k a_k) )) \\  
& = & \sum_{k=1}^n \Tr ( x_k (u(\partial_k b_k)-(\partial_k b_k) a_k -
v(\partial_k a_k) \\
& + & (\partial_k a_k) b_k + a_k (\partial_k b_k)- b_k(\partial_k a_k) ) ) \\
& = & u\cdot \dv(v) - v\cdot \dv(u) .
\end{array}
$$
proving the cocycle condition. Here we have used the definition of $\partial_k$ 
operators (see equation \eqref{eq:partial}) and the fact that 
$a_k=\sum_{j=1}^n (\partial_j a_k) x_j$ and $b_k=\sum_{i=1}^n (\partial_i b_k) x_i$.
\end{proof}

The {\em divergence cocycle} transforms in a nice way under simplicial
and coproduct maps. For $u=(a_1,\dots,a_n)\in \dert_n$ we have
$\dv(u^{1,2,\dots,n})=\sum_{i=1}^n \Tr(x_i (\partial_i a_i))=
\dv(u)(x_1,\dots, x_n)$. For $\dv(u^{12,\dots,n+1})$ we compute
$$
\begin{array}{lll}
\dv(u^{12,\dots,n+1}) & = & \Tr(x_1 (\partial_1 a_1(x_1+x_2, \dots))+
                          x_2 (\partial_2 a_1(x_1+x_2, \dots)) ) \\
& + & \sum_{k=3}^{n+1} \Tr(x_k (\partial_k a_{k-1}(x_1+x_2, \dots))) \\
& = & \Tr((x_1+x_2)(\partial_1 a_1)(x_1+x_2, \dots) \\
& + & \sum_{k=2}^n x_{k+1} (\partial_k a_k)(x_1+x_2, \dots) ) \\
& = & (\dv(u))(x_1+x_2, x_3,\dots, x_{n+1}).
\end{array}
$$

\begin{proposition}
$\dv(\d u)=\delta(\dv(u)).$
\end{proposition}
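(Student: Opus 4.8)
The plan is to show that $\dv$ is a chain map intertwining the differential $\d$ on the spaces $\dert_n$ with the differential $\delta$ on the spaces $\tr_n$, by expanding both sides termwise and matching each summand of $\dv(\d u)$ against the corresponding summand of $\delta(\dv u)$. Since $\d u$ is by definition an alternating sum of simplicial and coproduct images of $u$, while the right-hand side of \eqref{eq:delta} applied to $f=\dv u$ is an alternating sum of $\dv u$ evaluated at partially merged or truncated arguments, it suffices to evaluate $\dv$ on each simplicial/coproduct term of $\d u$ and read off the answer.

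First I would record the behaviour of $\dv$ under the two families of maps occurring in $\d u$, extending the two displayed transformation formulas to all the relevant maps. For the simplicial terms, the component attached to an untouched generator contributes nothing to the divergence sum $\sum_k \Tr(x_k(\partial_k a_k))$, so the computation giving $\dv(u^{1,2,\dots,n})=(\dv u)(x_1,\dots,x_n)$ equally yields $\dv(u^{2,3,\dots,n+1})=(\dv u)(x_2,\dots,x_{n+1})$; these match the last and first terms of $\delta(\dv u)$. For the coproduct terms, the same calculation that produced $\dv(u^{12,\dots,n+1})=(\dv u)(x_1+x_2,x_3,\dots,x_{n+1})$, applied after relabeling positions, gives $\dv(u^{1,\dots,i(i+1),\dots,n+1})=(\dv u)(x_1,\dots,x_i+x_{i+1},\dots,x_{n+1})$, which is exactly the $i$-th interior term of $\delta(\dv u)$. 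Equivalently, all the permuted versions follow once one observes that $\dv$ is $S_n$-equivariant, $\dv(u^\sigma)=(\dv u)^\sigma$, which is immediate from the symmetric form of the definition of $\dv$ (the map $u\mapsto u^\sigma$ being conjugation by the relabeling automorphism of $\Ass_n$).

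Finally, I would substitute these identities into $\dv(\d u)$, expanded according to the definition of $\d$, carrying the signs $(-1)^i$ through unchanged, and observe that the result coincides term by term with \eqref{eq:delta} evaluated on $f=\dv u$, which is $\delta(\dv u)$. The only genuine work is bookkeeping: checking that the sign of each simplicial/coproduct summand of $\d u$ agrees with the sign of the matching summand of $\delta$, and that the substitution $x_i+x_{i+1}$ lands in the correct slot of the $n$-variable function $\dv u$. I do not expect any analytic difficulty here; the statement is a purely formal consequence of the two transformation rules for $\dv$ together with the definition of $\delta$.
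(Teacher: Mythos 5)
Your proposal is correct and follows essentially the same route as the paper: the paper's proof likewise expands $\d u$ into its alternating sum of simplicial and coproduct terms and substitutes the transformation formulas $\dv(u^{1,2,\dots,n})=(\dv u)(x_1,\dots,x_n)$ and $\dv(u^{12,\dots,n+1})=(\dv u)(x_1+x_2,x_3,\dots,x_{n+1})$ to match $\delta(\dv u)$ term by term. The only difference is that you spell out the $S_n$-equivariance of $\dv$ needed for the permuted middle terms, a point the paper leaves implicit in the ellipsis of its displayed computation.
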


\begin{proof}
We compute,
$$
\begin{array}{lll}
\dv(\d u) & = & 
\dv(u^{2,\dots,n+1}) - \dv(u^{12,\dots,n+1}) + \dots + (-1)^{n+1} \dv(u^{1,2,\dots,n}) \\
& = & \dv(u)(x_2,\dots,x_{n+1})-\dv(u)(x_1+x_2,\dots,x_{n+1})+\dots  \\
& + &  (-1)^{n+1}\dv(x_1,\dots,x_{n}) \\
& = & \delta(\dv(u)).
\end{array}
$$
\end{proof}

\section{Kashiwara-Vergne Lie algebras}  \label{sec:KVlie}

\subsection{Definitions}
In this section we introduce a family of subalgebras of $\sder_n$
called {\em Kashiwara-Vergne} Lie algebras. 

\begin{definition}
The Kashiwara-Vergne Lie algebra $\kv_n$ is a Lie subalgebra of special
derivations spanned by elements with vanishing divergence.
\end{definition}

Note that $\kv_n$ is indeed a Lie subalgebra of $\sder_n$. For two 
derivations $u,v \in \kv_n$ the cocycle property for divergence
implies $\dv([u,v])=u\cdot \dv(v)-v\cdot \dv(u)=0$, as required.

\begin{example}
The element $t=(y,x)\in \sder_2$ is contained in $\kv_2$. Indeed,
we have $a(x,y)=y, b(x,y)=x$ and $\partial_x a=\partial_y b=0$
which implies $\dv(t)=0$.
\end{example}

Simplicial and coproduct maps restrict to $\kv_n$ subalgebras.
Indeed, for $u \in \sder_n$ the condition $\dv(u)=0$ implies
$\dv(u^{1,2,\dots,n})=0$ and $\dv(u^{12,3,\dots,n+1})=0$.

\begin{example}
Since $t \in \kv_2$, we have $t^{1,2}, t^{1,3}, t^{2,3} \in \kv_3$
and $[t^{1,3},t^{2,3}]=([y,z],[z,x],[x,y]) \in \kv_3$.
\end{example}

In the case of $n=2$ we introduce an extension of $\kv_2$,
$$
\hkv_2:=\{ u \in \sder_2, \dv(u)\in {\rm ker}(\delta)\} .
$$
Recall that ${\rm ker}(\delta: \tr_2 \to \tr_3)={\rm im}(\delta: \tr_1 \to \tr_2)$.
Hence, for $u\in \hkv_2$ there exists an element $f \in \tr_1$ such that
$\dv(u)=\Tr(f(x)-f(x+y)+f(y))$. By Theorem \ref{th:hdelta}, such an element is unique
if we choose it in the form $f(x)=\sum_{k=2}^\infty f_k x^k$. By abuse of notations
we denote by $f$ the map $f: u \mapsto f$, and by $f_k$ the maps $f_k: u \mapsto f_k$.

The subspace $\hkv_2$ is a Lie subalgebra of $\sder_2$. Indeed,
for two derivations $u,v \in \hkv_2$ we compute
$\dv([u,v])=u \cdot \dv(v) - v \cdot \dv(u)$. We have
$\dv(v)=\delta f=\Tr(f(x)-f(x+y)+f(y))$ with $f\in x^2 \k[[x]]$. 
Note that $u \cdot \Tr(f(x+y))=0$ since $u(x+y)=0$
and $u \cdot \Tr(f(x))= \Tr ([x,a] f'(x)) =\Tr([xf'(x),a])=0$, where $u(x)=[x,a]$. Hence,
$u \cdot \dv(v)=0$, and similarly $v\cdot \dv(u)=0$. In fact, we proved
$[\hkv_2, \hkv_2] \subset \kv_2$. 

\begin{proposition}
Let $u \in \hkv_2$. Then, $f(u)$ is odd, and Taylor coefficients 
$f_k, k=3,5,\dots$ are characters of $\hkv_2$. 
\end{proposition}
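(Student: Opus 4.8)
The plan is to treat the two assertions separately: the character property will follow formally from linearity together with the inclusion $[\hkv_2,\hkv_2]\subseteq\kv_2$ established just above, while oddness is the substantive point and will be proved by isolating a single Taylor coefficient of the divergence.

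First I would observe that the assignment $u\mapsto f(u)$ is linear. Indeed $\dv\colon\sder_2\to\tr_2$ is linear, and by Theorem \ref{th:hdelta} the map $\delta\colon\tr_1\to\tr_2$ becomes injective once we normalize $f\in x^2\k[[x]]$ (its kernel is spanned by $\Tr(x)$, which we have excluded). Hence $f=\delta^{-1}\circ\dv$ is a well-defined linear map $\hkv_2\to x^2\k[[x]]$, and every coefficient $f_k\colon\hkv_2\to\k$ is linear. Since $[\hkv_2,\hkv_2]\subseteq\kv_2=\ker(\dv)$, we get $f([u,v])=0$ for all $u,v\in\hkv_2$, so each $f_k$ vanishes on commutators and is therefore a character. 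The content of the proposition is then exactly that the surviving (nonzero) coefficients occur only in odd degrees.

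For the oddness I would argue degree by degree. Because $\dv$ and $\delta$ are homogeneous, it suffices to fix an even $k\ge2$ and show $f_k(u)=0$; for this I may replace $u$ by its degree-$k$ component, a homogeneous special derivation $u=(a,b)\in\sder_2^k$ with $\dv(u)=f_k\,\delta(x^k)$. Now $\delta(x^k)=\Tr(x^k+y^k-(x+y)^k)$, whose coefficient on the basis element $\Tr(x^{k-1}y)$ equals $-k\ne0$. So it is enough to prove that the $\Tr(x^{k-1}y)$-coefficient of $\dv(u)$ vanishes when $k$ is even. Only the content-$(k-1,1)$ parts of $a$ and $b$ (one letter $y$) can feed into this coefficient, and the space of such Lie elements is one-dimensional, spanned by $\ad_x^{k-1}(y)=\sum_{i=0}^{k-1}(-1)^i\binom{k-1}{i}x^{k-1-i}yx^{i}$; write $\lambda_a,\lambda_b$ for the coefficients of $\ad_x^{k-1}(y)$ in $a,b$, equivalently the coefficients of the word $x^{k-1}y$. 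Expanding $\dv(u)=\Tr(x\,\partial_x a)+\Tr(y\,\partial_y b)$ on these components gives, after a short calculation using $\sum_i(-1)^i\binom{k-1}{i}=0$, the value $\lambda_b-\lambda_a$ for the $\Tr(x^{k-1}y)$-coefficient.

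The decisive step is to feed in the special condition $[x,a]+[y,b]=0$, read in $\Ass_2$ as $xa-ax+yb-by=0$, and extract word coefficients. Comparing the coefficient of $x^{k}y$ on both sides yields $\lambda_a=0$ (the only surviving contribution is $xa$, since $b$ has no pure power $x^k$), while comparing the coefficient of $yx^{k-1}y$ yields $\lambda_b-(-1)^{k-1}\lambda_b=\lambda_b\bigl(1-(-1)^{k-1}\bigr)=0$, because the word $yx^{k-1}$ appears in $\ad_x^{k-1}(y)$ with sign $(-1)^{k-1}$. For even $k$ this forces $\lambda_b=0$, so the $\Tr(x^{k-1}y)$-coefficient of $\dv(u)$ is $\lambda_b-\lambda_a=0$, whence $f_k(u)=0$. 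Thus $f(u)$ is odd and only $f_3,f_5,\dots$ can be nonzero. The main obstacle is precisely this bookkeeping: one must check that no other word-content parts of $a,b$ leak into the $\Tr(x^{k-1}y)$-coefficient, and that the two word extractions from $[x,a]+[y,b]=0$ are read off with the correct signs. Once these identifications are pinned down the rest is automatic; the degree-$3$ case, where $\sder_2^3$ is one-dimensional and $\dv$ is a nonzero multiple of $\delta(x^3)$, is a useful consistency check confirming that the odd coefficients genuinely survive.
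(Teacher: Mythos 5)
Your proof is correct and is essentially the paper's own argument in mirror image: where the paper isolates the terms linear in $x$ (the coefficients of $\ad_y^m x$ in $a$ and $b$) and reads off the coefficient of $\Tr(xy^{n-1})$, you isolate the terms linear in $y$ (the coefficients of $\ad_x^{k-1}y$) and the coefficient of $\Tr(x^{k-1}y)$, and your two word-coefficient extractions from $[x,a]+[y,b]=0$ are exactly the explicit form of the paper's observations that $\ad_y^{m+1}(x)\notin {\rm im}(\ad_x)$ and that $[x,\ad_y^m(x)]\notin{\rm im}(\ad_y)$ for $m$ odd. The character part (linearity of $u\mapsto f(u)$ plus vanishing of $\dv$ on $\kv_2$ and $[\hkv_2,\hkv_2]\subset\kv_2$) likewise coincides with the paper's reasoning.
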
 

\begin{proof}
Let $u\in \hkv_2$ with divergence 
$\dv(u)=\Tr(f(x)-f(x+y)+f(y))$, where $f(x)=\sum_{k=2}^\infty f_k x^k$.
Note that the coefficient in front of $\tr(xy^{n-1})$ in $\dv(u)$ is equal
to $-nf_n$. 
Since $u=(a,b)\in \hkv_2$, we have $u(x+y)=[x,a]+[y,b]=0$. Consider terms
linear in $x$ in both $a$ and $b$. First, observe that $b$ does not contain terms
of the form $\ad_y^m(x)$ for $m\geq 1$ since $\ad_y^{m+1}(x) \notin {\rm im}(\ad_x)$.
In particular, this applies to all $m$ odd. 
Next, note that $a$ does not contain terms of the form $\ad_y^m(x)$ for $m$ odd
since in this case $[x,\ad_y^m(x)] \notin {\rm im}(\ad_y)$. Hence, $\dv(u)=\Tr(x \partial_x a
+y \partial_y b)$ does not contain terms of the form $\Tr(xy^m)$ for $m$ odd, and 
$f_k=0$ for all $k=m+1$ even.
Finally, Taylor coefficients of $f$ are characters of $\hkv_2$ since 
they vanish on $\kv_2$, and on $[\hkv_2, \hkv_2] \subset \kv_2$. 
\end{proof}

%

\subsection{The Grothendieck-Teichm\"uller Lie algebra}
Recall that the Grothendieck-Teichm\"uller Lie algebra $\grt$ was
defined by Drinfeld \cite{dr} in the following way. It is spanned
by derivations $(0,\psi) \in \dert_2$ which satisfy the following three relations
\begin{equation} \label{eq:grt1}
\psi(x,y)=-\psi(y,x),
\end{equation}
\begin{equation}   \label{eq:grt2}
\psi(x,y)+\psi(y,z)+\psi(z,x)=0
\end{equation}
for $x+y+z=0$ (that is, one can put $z=-x-y$),
\begin{equation}  \label{eq:grt3}
\psi(t^{1,2},t^{2,34})+\psi(t^{12,3},t^{3,4})=
\psi(t^{2,3},t^{3,4})+\psi(t^{1,23},t^{23,4})+\psi(t^{1,2},t^{2,3}),
\end{equation}
where the last equation takes values in the Lie algebra $\t_4$ and
$t^{1,23}=t^{1,2}+t^{1,3}$ {\em etc.} Note that defining equations of $\grt$
have no solutions in degrees one and two.
The Lie bracket induced on solutions of \eqref{eq:grt1},
\eqref{eq:grt2},\eqref{eq:grt3} is called Ihara bracket,
$$
[\psi_1, \psi_2]_{\rm Ih}=(0,\psi_1)(\psi_2)-(0,\psi_2)(\psi_1)+[\psi_1,\psi_2].
$$

\begin{theorem}  \label{th:grt}
The map $\nu: \psi \mapsto (\psi(-x-y,x),\psi(-x-y,y))$ is an
injective Lie algebra homomorphism mapping $\grt$ to $\hkv_2$.
\end{theorem}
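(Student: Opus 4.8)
The plan is to exhibit $\nu(\psi)$ as the \emph{special} representative of the Grothendieck--Teichm\"uller derivation $(0,\psi)\in\dert_2$ modulo inner derivations, and then to verify in turn that $\nu(\psi)\in\hkv_2$, that $\nu$ intertwines the Ihara bracket with the commutator of derivations, and that $\nu$ is injective. First I would record the basic identity: writing $z=-x-y$ and combining \eqref{eq:grt1} with \eqref{eq:grt2} in the form $\psi(z,y)=\psi(x,y)+\psi(z,x)$, one sees that
\[
\nu(\psi)=(\psi(z,x),\psi(z,y))=(0,\psi)+(c,c),\qquad c=\psi(-x-y,x),
\]
where $(c,c)$ is the inner derivation $w\mapsto[w,c]$. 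Thus $\nu(\psi)$ and $(0,\psi)$ agree modulo inner derivations.

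The main obstacle is membership $\nu(\psi)\in\hkv_2$, which has two parts. The special condition $\nu(\psi)(x+y)=0$ unwinds, via the displayed identity, to $[y,\psi(x,y)]+[x+y,\psi(-x-y,x)]=0$, and I would deduce it from antisymmetry \eqref{eq:grt1}, the hexagon-type relation \eqref{eq:grt2}, and the Jacobi identity (I have checked that this mechanism closes in the first nontrivial degree). For the divergence I would use the cyclicity identity $\Tr(x\,\partial_x c)+\Tr(y\,\partial_y c)=\Tr(c)$; since $c=\psi(-x-y,x)\in\lie_2$ has degree $\geq 3$ we get $\Tr(c)=0$, so the inner part is divergence-free and $\dv(\nu(\psi))=\dv(0,\psi)=\Tr(y\,\partial_y\psi)$. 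Using \eqref{eq:grt1} and \eqref{eq:grt2} I would then show this trace lies in ${\rm ker}(\delta:\tr_2\to\tr_3)$; since $H^2(\tr,\delta)=0$ by Theorem \ref{th:hdelta}, it is automatically a coboundary $\delta(g)$ with $g\in\tr_1$, which is exactly the condition defining $\hkv_2$. Generically $\dv(\nu(\psi))\neq0$, so $\nu(\psi)\notin\kv_2$, in agreement with the Duflo-function picture of the introduction.

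For the homomorphism property I would use that, by the very definition of the Ihara bracket, $[(0,\psi_1),(0,\psi_2)]=(0,[\psi_1,\psi_2]_{\rm Ih})$ as derivations in $\dert_2$. Since inner derivations form an ideal, the reduction above gives $[\nu(\psi_1),\nu(\psi_2)]\equiv\nu([\psi_1,\psi_2]_{\rm Ih})$ modulo inner derivations. Both sides are special --- the left because $\sder_2$ is a subalgebra, the right by the membership step applied to $[\psi_1,\psi_2]_{\rm Ih}\in\grt$ --- so their difference is a special inner derivation $(d,d)$. But $(d,d)(x+y)=[x+y,d]$ vanishes only for $d\in\k(x+y)$, i.e.\ $d$ of degree one, whereas all elements here have degree $\geq6$; hence $d=0$ and $\nu$ is a Lie algebra homomorphism. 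The only input from the pentagon \eqref{eq:grt3} is, through Drinfeld, that $\grt$ is closed under the Ihara bracket.

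Finally, injectivity is immediate: $\nu(\psi)=0$ forces $\psi(-x-y,x)=0$, and the substitution $x\mapsto-x-y,\ y\mapsto x$ is a linear automorphism of $\lie_2$ (its linearization has determinant $1$), so $\psi=0$. The step I expect to be most delicate is showing $\Tr(y\,\partial_y\psi)\in{\rm ker}\,\delta$: antisymmetry alone only forces this element to be invariant under $x\leftrightarrow y$, so the hexagon relation \eqref{eq:grt2} is genuinely needed to land it in the (one-dimensional in each degree) kernel of $\delta$.
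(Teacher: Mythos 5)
Your structural observations are sound: the identity $\nu(\psi)=(0,\psi)+(c,c)$ with $c=\psi(-x-y,x)$ is correct, the divergence of the inner part vanishes by the cyclicity argument, the homomorphism step modulo inner derivations is a legitimate (and slightly slicker) alternative to the paper's direct component computation, and injectivity is as easy as you say. The genuine gap is in the membership step, and it is fatal as stated: you claim that the special condition $\nu(\psi)(x+y)=0$, and likewise $\dv(\nu(\psi))\in{\rm ker}\,\delta$, follow from \eqref{eq:grt1}, \eqref{eq:grt2} and Jacobi alone, with the pentagon \eqref{eq:grt3} entering only through closure of $\grt$ under the Ihara bracket. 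This is false, and your degree-3 check could not detect it, because in degree 3 the solutions of \eqref{eq:grt1}--\eqref{eq:grt2} already coincide with $\k\sigma_3\subset\grt$ (there the vanishing of $\nu(\psi)(x+y)$ is a Jacobi accident). In degree 4 they do not. Set $w=[x,y]$, $z=-x-y$, so $[x,y]=[y,z]=[z,x]=w$; for $\psi=a\,\ad_x^2w+b\,\ad_x\ad_yw+a\,\ad_y^2w$ (the general solution of \eqref{eq:grt1}, using $\ad_x\ad_yw=\ad_y\ad_xw$) the left side of \eqref{eq:grt2} equals $(4a-b)(\ad_x^2+\ad_x\ad_y+\ad_y^2)w$, so
\[
\psi_4(x,y)=\ad_x^2 w+4\,\ad_x\ad_y w+\ad_y^2 w
\]
satisfies both \eqref{eq:grt1} and \eqref{eq:grt2}. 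Yet by your own reduction $\nu(\psi_4)(x+y)=[y,\psi_4(x,y)]-[z,\psi_4(z,x)]$, and since $\psi_4(z,x)=-2\ad_x^2w-2\ad_x\ad_yw+\ad_y^2w$, one computes
\[
\nu(\psi_4)(x+y)=-2\ad_x^3w-2\ad_x^2\ad_yw+\ad_x\ad_y^2w-\ad_y\ad_x^2w+2\ad_y\ad_x\ad_yw+2\ad_y^3w ,
\]
which is nonzero: expanded in $\Ass_2$, the coefficient of the word $x^4y$ is $-2$ (only the first term contains monomials with a single letter $y$). So for hexagonal-but-not-pentagonal $\psi$ the derivation $\nu(\psi)$ need not even lie in $\sder_2$, and there is likewise no reason for $\Tr(y\,\partial_y\psi)$ to lie in ${\rm ker}\,\delta$; your "most delicate step" cannot close with the inputs you allow yourself.

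The paper makes the pentagon do exactly this work, through Proposition \ref{prop:dpsi}: $\d\,\nu(\psi)=\psi(t^{1,2},t^{2,3})$, whose Appendix proof uses several permuted instances of \eqref{eq:grt3} inside $\t_4$. From this single identity both memberships fall out cohomologically: $(\d\Psi)(x+y+z)=0$ gives $\delta(\Psi(x+y))=0$, and since $\Psi(x+y)$ has degree at least $4$ while ${\rm ker}(\delta:\lie_2\to\lie_3)=\k\,[x,y]$ sits in degree $2$ (Theorem \ref{th:hdelta}), this forces $\Psi(x+y)=0$; similarly $\delta(\dv\Psi)=\dv(\d\Psi)=0$ because $t^{1,2},t^{2,3}\in\kv_3$, and $H^2(\tr,\delta)=0$ yields $\dv\Psi\in{\rm im}\,\delta$. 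To repair your proof you must establish some avatar of this identity, i.e.\ use \eqref{eq:grt3} for the individual element $\psi$ and not merely for bracket closure; once that is done, your mod-inner treatment of the homomorphism property and your injectivity argument go through.
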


We split the proof of Theorem \ref{th:grt} into several steps.

\begin{proposition}  \label{prop:dpsi}
Let $\psi \in \grt$. Then, $\Psi=\nu(\psi)$ verifies
\begin{equation} \label{eq:dpsi}
\d \Psi = \psi(t^{1,2}, t^{2,3}).
\end{equation}
\end{proposition}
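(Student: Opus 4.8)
The plan is to prove \eqref{eq:dpsi} by writing both sides as tangential derivations in $\dert_3$ and matching their three components, the matching being forced by the defining relations \eqref{eq:grt1} and \eqref{eq:grt2}. First I would expand $\d\Psi=\Psi^{2,3}-\Psi^{12,3}+\Psi^{1,23}-\Psi^{1,2}$ using the simplicial and coproduct formulas of Section~\ref{subsec:simplicial}. Writing $\Psi=(A,B)$ with $A=\psi(-x-y,x)$, $B=\psi(-x-y,y)$ and $w=x+y+z$, this gives
\begin{equation*}
\begin{array}{lll}
\Psi^{2,3} & = & (0,\ \psi(-y-z,y),\ \psi(-y-z,z)), \\
\Psi^{12,3} & = & (\psi(-w,x+y),\ \psi(-w,x+y),\ \psi(-w,z)), \\
\Psi^{1,23} & = & (\psi(-w,x),\ \psi(-w,y+z),\ \psi(-w,y+z)), \\
\Psi^{1,2} & = & (\psi(-x-y,x),\ \psi(-x-y,y),\ 0),
\end{array}
\end{equation*}
so that $\d\Psi=(c_1,c_2,c_3)$ is a signed sum of values of $\psi$ at pairs of linear forms in $x,y,z$, the first argument always being a partial sum of $-x,-y,-z$.

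Next I would compute the right hand side. Using $t^{1,2}=(y,x,0)$, $t^{2,3}=(0,z,y)$ and the transported bracket $[u,v]_k=u(b_k)-v(a_k)+[a_k,b_k]$ on triples, I would determine the components of $\psi(t^{1,2},t^{2,3})$ recursively on the bracket length of $\psi$. A convenient simplification is that the central element $c=t^{1,2}+t^{1,3}+t^{2,3}$ drops out of every multibracket, so that $\psi(t^{1,2},t^{2,3})=\psi(-t^{12,3},t^{2,3})$ with $t^{12,3}=t^{1,3}+t^{2,3}$. Matching $c_k$ against the $k$th component then reduces, in each internal degree, to a polynomial identity in $\psi$; these identities follow from \eqref{eq:grt1} and \eqref{eq:grt2}. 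Concretely, the two relations combine into $\psi(-x-y,x)-\psi(-x-y,y)=-\psi(x,y)$ (together with its permuted variants), which supplies exactly the cancellations needed. Since both sides lie in $\sder_3$ (speciality of $\Psi$ itself being a consequence of the same two relations), it suffices to match two of the three components, the third being then determined by $[x,c_1]+[y,c_2]+[z,c_3]=0$ together with the normalisation $p_k(c_k)=0$.

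The main obstacle is precisely this matching: obtaining a workable description of the components of $\psi(t^{1,2},t^{2,3})$ and checking term by term that they equal $(c_1,c_2,c_3)$ for every $\psi\in\grt$. I would organise it as an induction on internal degree, with \eqref{eq:grt1}--\eqref{eq:grt2} providing the degreewise cancellations. That these relations are indispensable is already visible in low degree: for the degree-$3$ generator $\psi=[x+y,[x,y]]$ the two sides agree, whereas for an antisymmetric $\psi$ violating \eqref{eq:grt2}, such as $\psi=[x,y]$, the second components differ by the nonzero term $-3[x,z]$ --- the very factor of $3$ by which \eqref{eq:grt2} fails for $[x,y]$.
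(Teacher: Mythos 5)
Your expansion of $\d\Psi$ into the four triples is exactly the paper's starting point, but the heart of your plan --- that the componentwise matching ``follows from \eqref{eq:grt1} and \eqref{eq:grt2}'' --- is not a gap to be filled later: it is false, and the pentagon relation \eqref{eq:grt3}, which your proposal never mentions, is the essential ingredient. Indeed, \eqref{eq:dpsi} \emph{implies} \eqref{eq:grt3}: applying $\d$ to \eqref{eq:dpsi} and using $\d^2=0$ gives $\d\,\psi(t^{1,2},t^{2,3})=0$, and since the simplicial and coproduct maps send the pair $(t^{1,2},t^{2,3})$ to $(t^{2,3},t^{3,4})$, $(t^{12,3},t^{3,4})$, $(t^{1,23},t^{23,4})$, $(t^{1,2},t^{2,34})$, $(t^{1,2},t^{2,3})$, this reads
\begin{equation*}
\psi(t^{2,3},t^{3,4})-\psi(t^{12,3},t^{3,4})+\psi(t^{1,23},t^{23,4})-\psi(t^{1,2},t^{2,34})+\psi(t^{1,2},t^{2,3})=0,
\end{equation*}
which, by injectivity of $\t_4\to\sder_4$ (Proposition \ref{prop:tn}), is precisely \eqref{eq:grt3}. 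And \eqref{eq:grt3} does not follow from \eqref{eq:grt1}--\eqref{eq:grt2}: in degree $4$, where $\grt$ vanishes (by the dimension count for $\hkv_2$ cited in Section \ref{sec:KVlie} together with injectivity of $\nu$), the nonzero element $\psi_0=[x,[x,[x,y]]]+[y,[y,[x,y]]]+4[y,[x,[x,y]]]$ satisfies both \eqref{eq:grt1} and \eqref{eq:grt2}, so \eqref{eq:dpsi} must fail for $\nu(\psi_0)$, and no degreewise argument from those two relations alone can succeed. Your low-degree evidence cannot detect this: $[x+y,[x,y]]=[x,[x,y]]-[y,[y,x]]$ is proportional to $\sigma_3$ and hence already satisfies \eqref{eq:grt3}, while $[x,y]$ only probes \eqref{eq:grt2}. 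For the same reason your parenthetical claim that speciality of $\Psi$ follows from the two relations is unjustified: they yield only $\Psi(x+y)=-[x,\psi(x,y)]+[z,\psi(y,z)]$ with $z=-x-y$, and in the paper $\Psi\in\sder_2$ is deduced \emph{from} \eqref{eq:dpsi}, i.e.\ downstream of the pentagon.

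What your plan is missing, concretely, is the device that lets \eqref{eq:grt3} --- an identity in $\t_4$ --- act on Lie series in $x,y,z$. The paper embeds $\t_4$ into the semidirect sum of $\dert_3$ and $\lie_3$, sending $t^{1,2},t^{1,3},t^{2,3}$ to the derivations $(y,x,0),(z,0,x),(0,z,y)$ and $t^{1,4},t^{2,4},t^{3,4}$ to the generators $x,y,z$. Each component $(\d\Psi)(x_k)=[x_k,c_k]$ is then lifted to an element of $\t_4$, where centrality of sums such as $t^{1,2}+t^{1,4}+t^{2,4}$, the defining relations of $\t_4$, and suitably permuted instances of \eqref{eq:grt3} (the $(3214)$ instance for the $x$-component, the $(1324)$ and $(3124)$ instances together with \eqref{eq:grt2} for the $y$-component, and \eqref{eq:grt3} itself for the $z$-component) reduce it to $[\psi(t^{1,2},t^{2,3}),t^{k,4}]$. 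Your proposed induction on internal degree never formulates the component identities inside $\t_4$ and therefore has no mechanism for invoking the pentagon at all; with the embedding in hand no induction is needed, and without it the matching cannot close.
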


We defer the proof of this proposition to Appendix.

\begin{proposition}
${\rm im}(\nu) \subset \hkv_2$.
\end{proposition}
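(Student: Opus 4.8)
The plan is to verify the two conditions defining $\hkv_2$ for $\Psi = \nu(\psi) = (\psi(-x-y,x), \psi(-x-y,y))$: first that $\Psi$ is a special derivation, i.e. $\Psi(x+y)=0$, and then that $\dv(\Psi) \in {\rm ker}(\delta: \tr_2 \to \tr_3)$. Both will be deduced from Proposition \ref{prop:dpsi}, which identifies $\d\Psi = \psi(t^{1,2}, t^{2,3})$, together with the cohomological computations already available.

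For the specialness, I would write $\Psi = (a,b)$ and introduce the Lie series $\Theta(u,v) = [u, a(u,v)] + [v, b(u,v)] \in \lie_2$, which records the failure of $\Psi$ to be special, since $\Theta(x,y) = \Psi(x+y)$. The first step is to evaluate $(\d\Psi)(x+y+z)$ using the explicit simplicial and coproduct formulas: a direct check gives $\Psi^{2,3}(x+y+z) = \Theta(y,z)$, $\Psi^{12,3}(x+y+z) = \Theta(x+y,z)$, $\Psi^{1,23}(x+y+z) = \Theta(x,y+z)$ and $\Psi^{1,2}(x+y+z) = \Theta(x,y)$, so that $(\d\Psi)(x+y+z) = (\delta\Theta)(x,y,z)$ in the notation of Example \ref{ex:deltalie}. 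On the other hand, by Proposition \ref{prop:dpsi} we have $\d\Psi = \psi(t^{1,2}, t^{2,3})$, which lies in $\sder_3$ and hence annihilates $x+y+z$. Therefore $\delta\Theta = 0$. By Theorem \ref{th:hdelta} the kernel of $\delta: \lie_2 \to \lie_3$ is $\k [x,y]$, concentrated in degree two; but $\psi$ starts in degree three (its defining equations have no solutions in degrees one and two), so $a$ and $b$ have degree $\geq 3$ and $\Theta$ has degree $\geq 4$. Hence $\Theta = 0$, that is $\Psi(x+y)=0$ and $\Psi \in \sder_2$.

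For the divergence condition I would invoke the compatibility $\dv(\d u) = \delta(\dv(u))$ established above, applied to $u = \Psi$. Since $t^{1,2}, t^{2,3} \in \kv_3$ (as $t \in \kv_2$ and simplicial and coproduct maps preserve vanishing divergence) and $\kv_3$ is a Lie subalgebra closed under the degree-completed bracket, the Lie series $\psi(t^{1,2}, t^{2,3})$ again lies in $\kv_3$ and so has vanishing divergence. Thus $\delta(\dv\Psi) = \dv(\d\Psi) = \dv(\psi(t^{1,2}, t^{2,3})) = 0$, i.e. $\dv\Psi \in {\rm ker}(\delta: \tr_2 \to \tr_3)$. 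Combined with $\Psi \in \sder_2$ this yields $\Psi \in \hkv_2$, and therefore ${\rm im}(\nu) \subset \hkv_2$.

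The routine but essential bookkeeping is the evaluation of the four cofaces of $\d\Psi$ on $x+y+z$; the conceptual heart of the argument, and the step I expect to be least transparent, is the recognition that $\d\Psi \in \sder_3$ translates, through this evaluation, into the four-term relation $\delta\Theta = 0$ for the specialness defect $\Theta$. Once this is in hand, Theorem \ref{th:hdelta} and the degree bound coming from $\grt$ starting in degree three finish the specialness, and the identity $\dv\circ\d = \delta\circ\dv$ finishes the divergence claim with no further computation.
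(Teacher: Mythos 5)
Your proof is correct and follows essentially the same route as the paper: both conditions are deduced from Proposition \ref{prop:dpsi} via the identities $(\d\Psi)(x+y+z)=\delta(\Psi(x+y))$ and $\dv(\d\Psi)=\delta(\dv(\Psi))$, together with Theorem \ref{th:hdelta} and the fact that $\grt$ starts in degree three. The only difference is that you spell out the coface bookkeeping behind $(\d\Psi)(x+y+z)=\delta\Theta$ explicitly, which the paper asserts without detail.
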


\begin{proof}
Using equation \eqref{eq:dpsi} we compute
$$
\delta(\Psi(x+y))=(\d \Psi)(x+y+z)=\psi(t^{1,2}, t^{2,3})(x+y+z)=0
$$
because $t^{1,2}, t^{2,3} \in \sder_3$. Since $\Psi \in \dert_2$ is of degree at least
three, $\Psi(x+y)$ is of degree at least four, and by Theorem \ref{th:hdelta}
this implies $\Psi(x+y)=0$ and $\Psi \in \sder_2$.

Similarly, we compute
$$
\delta(\dv(\Psi))=\dv(\d \Psi)=\dv( \psi(t^{1,2}, t^{2,3})) =0
$$
since $t^{1,2}, t^{2,3} \in \kv_3$. By Theorem \ref{th:hdelta}, this implies
$\dv(\Psi) \in {\rm im}(\delta)$ and $\Psi \in \hkv_2$.
\end{proof}

\begin{proposition}
$\nu: \grt \to \hkv_2$ is a Lie algebra homomorphism.
\end{proposition}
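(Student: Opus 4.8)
I want to show that $\nu\colon\grt\to\hkv_2$ intertwines the Ihara bracket on $\grt$ with the derivation bracket on $\hkv_2$. Concretely, writing $\Psi_i=\nu(\psi_i)$, the goal is the identity
\begin{equation*}
\nu\bigl([\psi_1,\psi_2]_{\rm Ih}\bigr)=[\Psi_1,\Psi_2].
\end{equation*}
My strategy is to expand both sides using the explicit formula $\nu(\psi)=(\psi(-x-y,x),\psi(-x-y,y))$ and the definition
$[\psi_1,\psi_2]_{\rm Ih}=(0,\psi_1)(\psi_2)-(0,\psi_2)(\psi_1)+[\psi_1,\psi_2]$, then reconcile the two expressions. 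The bracket of tangential derivations was computed earlier in the excerpt: for $u=(a_1,a_2)$ and $v=(b_1,b_2)$ the components of $[u,v]$ are $u(b_k)-v(a_k)+[a_k,b_k]$. Applying this with $a_k,b_k$ equal to the evaluated $\psi_i$ is the organizing computation.

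First I would set up the translation dictionary. The derivation $(0,\psi)\in\dert_2$ acts by $x\mapsto 0$, $y\mapsto[y,\psi(x,y)]$, so the Ihara bracket term $(0,\psi_1)(\psi_2)$ means applying the derivation $(0,\psi_1)$ to the Lie series $\psi_2(x,y)$, and similarly for the swapped term. The subtle point is that $\nu$ \emph{substitutes} the arguments $x\mapsto -x-y$, $y\mapsto x$ in the first slot and $x\mapsto -x-y$, $y\mapsto y$ in the second slot. So I would first understand how the action of $(0,\psi)$ on $\psi'(x,y)$ behaves under these substitutions. The cyclic/antisymmetry relations \eqref{eq:grt1} and \eqref{eq:grt2} are what make the two substitution patterns compatible: since $\psi(x,y)=-\psi(y,x)$ and $\psi(x,y)+\psi(y,z)+\psi(z,x)=0$ with $z=-x-y$, the values $\psi(-x-y,x)$ and $\psi(-x-y,y)$ are tied together, and one can re-express derivations acting in the $\lie_2$ with generators $x,y$ in terms of the $\Psi$-components. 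I expect to verify that $\nu$ is compatible with the individual pieces: that $\nu$ applied to $(0,\psi_1)(\psi_2)-(0,\psi_2)(\psi_1)$ matches the ``$u(b)-v(a)$'' part of the derivation bracket, while $\nu$ applied to the ordinary bracket $[\psi_1,\psi_2]$ matches the $[a,b]$-part.

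The main obstacle will be bookkeeping the change of variables $z=-x-y$ correctly in the derivation action. The trouble is that $(0,\psi)$ is a derivation of $\lie_2$ acting in the original $(x,y)$ variables, whereas $\Psi=\nu(\psi)$ is a derivation of $\lie_2$ whose components live after the substitution; the two ``$\psi$-slots'' of $\nu(\psi)$ are not independent but are the images of a single antisymmetric, cyclically-closed $\psi$. I would handle this by using the relations \eqref{eq:grt1}--\eqref{eq:grt2} to rewrite everything in terms of a consistent pair of arguments, and by exploiting that $\Psi\in\sder_2$, i.e.\ $\Psi(x+y)=0$ (established in the previous proposition), so that the two components are not arbitrary but constrained by $[x,\Psi_1\text{-part}]+[y,\Psi_2\text{-part}]=0$. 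This special-derivation constraint is what guarantees that the derivation-bracket formula collapses to exactly the Ihara-bracket formula after substitution. I do not anticipate needing the pentagon relation \eqref{eq:grt3} for this homomorphism property; that relation is what ensures $\Psi$ actually lands in $\sder_2$ and satisfies the finer $\hkv_2$ conditions (already proved), but the bracket-compatibility should follow from \eqref{eq:grt1}, \eqref{eq:grt2}, the tangential bracket formula, and careful substitution alone. The final step is to collect terms and read off the claimed equality, at which point injectivity is immediate since $\nu$ recovers $\psi$ from its first component via $\psi(x,y)=\Psi_1(-x-y\mapsto x,\dots)$, so $\nu(\psi)=0$ forces $\psi=0$.
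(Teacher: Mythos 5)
Your plan coincides with the paper's own proof: the paper computes $(a,b)=[\nu(\psi_1),\nu(\psi_2)]$ via the tangential-derivation bracket formula $u(b_k)-v(a_k)+[a_k,b_k]$ and uses exactly the point you single out --- that $\nu(\psi_i)\in\sder_2$, so the derivation annihilates $-x-y$ and acts only through the second slot --- to identify each component with $\bigl((0,\psi_1)(\psi_2)-(0,\psi_2)(\psi_1)+[\psi_1,\psi_2]\bigr)$ evaluated at $(-x-y,x)$ and $(-x-y,y)$ respectively, i.e.\ with $\nu([\psi_1,\psi_2]_{\rm Ih})$. Your one superfluous worry is the role of \eqref{eq:grt1}--\eqref{eq:grt2} in reconciling the two slots: in the paper's argument each component is handled independently and those relations enter only through the previously established fact $\nu(\psi)\in\sder_2$, just as you correctly predict for \eqref{eq:grt3}.
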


\begin{proof}
Let $\psi_1,\psi_2 \in \grt$ and compute $(a,b)=[\nu(\psi_1), \nu(\psi_2)]$,
$$
\begin{array}{lll}
a(x,y) & = & \nu(\psi_1)(\psi_2(-x-y,x))-\nu(\psi_2)(\psi_1(-x-y,x) \\
       & + & [\psi_1(-x-y,x),\psi_2(-x-y,x)] \\
 & = & \left( (0,\psi_1)(\psi_2)-(0,\psi_2)(\psi_1)+[\psi_1,\psi_2] \right)(-x-y,x),
\end{array}
$$
where we used that $\nu(\psi_1), \nu(\psi_2) \in \sder_2$. Similarly, we have
$$
\begin{array}{lll}
b(x,y) & = & \nu(\psi_1)(\psi_2(-x-y,y))-\nu(\psi_2)(\psi_1(-x-y,y) \\
       & +  & [\psi_1(-x-y,y),\psi_2(-x-y,y)] \\
 & = & \left( (0,\psi_1)(\psi_2)-(0,\psi_2)(\psi_1)+[\psi_1,\psi_2]\right)(-x-y,y).
\end{array}
$$
In conclusion, $[\nu(\psi_1), \nu(\psi_2)]=\nu([\psi_1, \psi_2]_{\rm Ih})$, as required.
\end{proof}
This observation completes the proof of Theorem \ref{th:grt}.

It is known \cite{Ihara,dr} that there exit elements $\sigma_{2n+1}\in \grt$ of degree
$2n+1$ for all $n=1,2,\dots$ Modulo the double commutator ideal
$[[\lie_2, \lie_2],[\lie_2, \lie_2]]$, $\sigma_{2n+1}$ has the following form,
\begin{equation}  \label{eq:sigma}
\sigma_{2n+1}=\sum_{k=1}^{2n} \, \frac{(2n+1)!}{k! (2n+1-k)!} \,
\ad_x^{k-1} \ad_y^{2n-k} [x,y].
\end{equation}

\begin{proposition}  \label{prop:fsigma}
$f\circ \nu(\sigma_{2n+1})= - x^{2n+1}.$
\end{proposition}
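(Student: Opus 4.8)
The plan is to compute the divergence $\dv(\Psi)$ of $\Psi:=\nu(\sigma_{2n+1})$ and read off $f$ from it. Since $\sigma_{2n+1}$ is homogeneous of degree $2n+1$ and $\nu$ is degree-preserving, $\Psi=(a,b)$ is homogeneous of degree $2n+1$; hence $\dv(\Psi)\in\tr_2$ is homogeneous of degree $2n+1$, and by the uniqueness statement in Theorem~\ref{th:hdelta} the corresponding $f$ is the homogeneous series $f(x)=c\,x^{2n+1}$. As recorded in the proof that $f$ is odd, the coefficient of $\Tr(xy^{2n})$ in $\dv(\Psi)=\delta f$ equals $-(2n+1)f_{2n+1}=-(2n+1)c$, so it suffices to show that this coefficient equals $2n+1$, which forces $c=-1$.

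First I would isolate that coefficient inside $\dv(\Psi)=\Tr\!\big(x\,\partial_x a+y\,\partial_y b\big)$ by projecting onto the multidegree $(1,2n)$ part. Writing $p(\cdot)$ for the coefficient of the word $y^{2n}x$, the summand $\Tr(x\,\partial_x a)$ contributes exactly $p(a)$, since the only word of $x\,\partial_x a$ lying in the cyclic class of $xy^{2n}$ comes from the $y^{2n}x$ term of $a$. The summand $\Tr(y\,\partial_y b)$ contributes the sum of the coefficients of all multidegree $(1,2n)$ words of $b$ in which $x$ is not in last position; because $b$ is a Lie element of degree $\geq 2$ one has $\Tr(b)=0$, so the full cyclic sum of these $2n+1$ coefficients vanishes and this summand collapses to $-p(b)$. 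Thus the coefficient of $\Tr(xy^{2n})$ in $\dv(\Psi)$ equals $p(a-b)$.

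The key simplification is to rewrite $a-b$ using the defining relations of $\grt$. Applying \eqref{eq:grt2} with $z=-x-y$ gives $\psi(-x-y,x)=-\psi(x,y)-\psi(y,-x-y)$, while the antisymmetry \eqref{eq:grt1} gives $\psi(-x-y,y)=-\psi(y,-x-y)$; subtracting, $a-b=\psi(-x-y,x)-\psi(-x-y,y)=-\psi(x,y)$, with $\psi=\sigma_{2n+1}$. Hence the sought coefficient equals $-p(\sigma_{2n+1})$, the negative of the coefficient of $y^{2n}x$ in $\sigma_{2n+1}$. To evaluate the latter from \eqref{eq:sigma}, I would note that the multidegree $(1,2n)$ component of $\lie_2$ is one-dimensional, spanned by $\ad_y^{2n}(x)$, and that the double commutator ideal $[[\lie_2,\lie_2],[\lie_2,\lie_2]]$ contains no element of multidegree $(1,m)$, since such a bracket would require an $x$ inside each of its two commutator factors. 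Therefore \eqref{eq:sigma} computes this component exactly, even though it only determines $\sigma_{2n+1}$ modulo the double commutator ideal, and only its $k=1$ term has $x$-degree one, contributing $(2n+1)\ad_y^{2n-1}[x,y]=-(2n+1)\ad_y^{2n}(x)$. Since $\ad_y^{2n}(x)=\sum_{j=0}^{2n}\binom{2n}{j}(-1)^j y^{2n-j}xy^{j}$ has coefficient $1$ on $y^{2n}x$, we get $p(\sigma_{2n+1})=-(2n+1)$, hence the coefficient of $\Tr(xy^{2n})$ in $\dv(\Psi)$ is $2n+1$ and $c=-1$.

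The hard part will be the coefficient bookkeeping of the second paragraph: getting the two derivative terms $\partial_x a$ and $\partial_y b$ to assemble cleanly, where the crucial input is the vanishing $\Tr(b)=0$ that collapses the $y\,\partial_y b$ contribution to a single word coefficient. The other delicate point is the dimension and double-commutator argument that legitimizes reading off the $(1,2n)$ component directly from \eqref{eq:sigma}; everything else reduces to the elementary identity $a-b=-\psi$ and a single binomial expansion.
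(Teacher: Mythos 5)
Your proof is correct, and it follows the paper's overall strategy --- identify $f$ via the coefficient of $\Tr(xy^{2n})$ in $\dv(\nu(\sigma_{2n+1}))$, using homogeneity, the uniqueness from Theorem \ref{th:hdelta}, and the relation that this coefficient equals $-(2n+1)f_{2n+1}$, and then show the coefficient is $2n+1$ --- but it executes the key computation by a different route. The paper extracts the linear-in-$x$ parts of $a(x,y)=\psi(-x-y,x)$ and $b(x,y)=\psi(-x-y,y)$ directly from \eqref{eq:sigma}, finding $(2n+1)\ad_y^{2n}x$ and $0$ respectively; you instead collapse the two divergence contributions into the single word coefficient $p(a-b)$ (your bookkeeping is right: $\Tr(x\partial_x a)$ picks out exactly $p(a)$, and $\Tr(b)=0$ turns the cyclic sum for the $y\partial_y b$ term into $-p(b)$), and then use the defining relations \eqref{eq:grt1} and \eqref{eq:grt2} to get $a-b=-\psi(x,y)$, so that only the multidegree-$(1,2n)$ component of $\sigma_{2n+1}$ in its original variables is needed. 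This buys two things. First, you never have to control how \eqref{eq:sigma} behaves under the substitutions $(x,y)\mapsto(-x-y,x)$ and $(x,y)\mapsto(-x-y,y)$; in particular the paper's claim that $b$ has vanishing linear-in-$x$ part tacitly uses that Lie words of degree at least two vanish when evaluated at $(-y,y)$, a point you bypass entirely. Second, your explicit observation that $[[\lie_2,\lie_2],[\lie_2,\lie_2]]$ contains nothing of multidegree $(1,m)$ (each bracket factor must contain an $x$, else it is a Lie word in $y$ alone of degree at least two, hence zero) makes rigorous the step the paper leaves implicit, namely that \eqref{eq:sigma}, valid only modulo the double commutator ideal, nonetheless determines the needed component exactly. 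The paper's version is shorter and also produces the explicit linear parts of $a$ and $b$; yours is more self-contained precisely where \eqref{eq:sigma} is only an approximation. The remaining steps --- the $k=1$ term $(2n+1)\ad_y^{2n-1}[x,y]=-(2n+1)\ad_y^{2n}x$, the coefficient $1$ of $y^{2n}x$ in $\ad_y^{2n}x$, and the final matching $-(2n+1)c=2n+1$ giving $c=-1$ --- all check out.
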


\begin{proof}
Equation \eqref{eq:sigma} implies that the linear in $x$ part of 
$a(x,y)=\sigma(-x-y,x)$ is equal to $(2n+1)\ad_y^{2n}x$, and the linear in
$x$ part of $b(x,y)=\sigma(-x-y,y)$ vanishes. Hence,
the coefficient in front of $\Tr(x y^{2n})$ in $\dv(\nu(\sigma_{2n+1}))$
is equal to $(2n+1)$, and
$$
\dv(\nu(\sigma_{2n+1}))=-\Tr(x^{2n+1}-(x+y)^{2n+1}+y^{2n+1})=-\delta \Tr(x^{2n+1}),
$$
which implies $f(\nu(\sigma_{2n+1}))=-x^{2n+1}$.
\end{proof}

Theorem \ref{th:grt} shows that 
$\hkv_2$ is infinite dimensional, and
Proposition \ref{prop:fsigma} implies that characters $f_k, k=3,5,\dots$
are surjective.
The Lie algebra $\hkv_2$ contains a central one dimensional Lie subalgebra
$\k t$ for $t=(y,x)$, and a Lie subalgebra isomorphic to 
the Lie algebra $\grt$. This observation suggests
the following conjecture on the structure of $\hkv_2$.

\vskip 0.2cm

\textbf{Conjecture.} The Lie algebra $\hkv_2$ is isomorphic to a direct
sum of the Grothendieck-Teichm\"uller Lie algebra $\grt$ and a one dimensional 
Lie algebra with generator in degree one, $\hkv_2 \cong \k t \oplus \grt$.

\vskip 0.2 cm

\begin{remark}
The Deligne-Drinfeld conjecture (see Section 6, \cite{dr}) states that 
$\grt$ is a free Lie algebra with generators $\sigma_{2n+1}$. 
In \cite{racinet}, Racinet introduced a graded Lie algebra
$\dmr_0$ related to combinatorics of multiple zeta values.
A numerical experiment of \cite{ENR} shows that up to degree 19
the Lie algebra $\dmr_0$ is freely generated by $\sigma_{2k+1}$,
and that $\dmr_0 \subset \grt$.
A numerical computation by Albert and the second author \cite{aht}
shows that up to degree 16 the dimensions of graded components
of $\hkv_2$ coincide with those of $\k t \oplus \lie(\sigma_3, \sigma_5, \dots)$
(up to degree 7, the computation has been done by Podkopaeva \cite{mp})
.
Since $\k t \oplus \nu(\grt) \subset \hkv_2$, we conclude that
the Conjecture stated above and the Deligne-Drinfeld conjecture are 
verified up to degree 16.
\end{remark}

\section{The Kashiwara-Vergne problem}  \label{sec:kv}

\subsection{Automorphisms of free Lie algebras}
Recall that one can associate a group $G$ 
to a positively graded Lie algebra $\g=\prod_{k=1}^\infty \g_k$
with all graded components of finite dimension. 
$G$ coincides with $\g$ as a set, and 
the group multiplication is defined by the Campbell-Hausdorff formula. 
If $\g$ is finite dimensional, $G$ is the connected and simply connected 
Lie group with Lie algebra $\g$. Even for $\g$ infinite dimensional we shall 
denote the map identifying $\g$ and $G$ by $\exp: \g \to G$ and its inverse 
by $\ln: G\to \g$. Then, the definition of the group multiplication in $G$
reads: $\exp(u)\exp(v)=\exp(\ch(u,v))$. 

Lie algebras $\dert_n, \sder_n, \kv_n$ and $\hkv_2$ introduced 
in the previous Section are positively graded, and all their graded components
are finite dimensional. Hence, they integrate to groups. We shall denote these
groups by $\Dert_n, \Sder_n, \KV_n$ and $\HKV_2$, respectively. 
The natural actions of $\dert_n, \sder_n, \kv_n$ and $\hkv_2$
on $\lie_n$ and on $\tr_n$ lift to actions of the corresponding
groups given by formula

$$
\exp(u)(a):= \sum_{n=0}^\infty u^n(a),
$$
where $u^n(a)$ is the $n$-tuple action of the derivation $u$ on $a$.
Note that the group $\Dert_n$ consists of automorphisms $g$ of 
$\lie_n$ with the property that for each $i=1,\dots,n$ there is
an inner automorphism $g_i$ such that $g(x_i)=g_i(x_i)$. Furthermore,
the group $\Sder_n$ is a subgroup of $\Dert_n$ singled out by
the condition $g(x)=x$ for $x=\sum_{i=1}^n x_i$.

%

In order to discuss the groups $\KV_n$ and $\HKV_2$ we introduce a
Lie group 1-cocycle $j: \Dert_n \to \tr_n$ which integrates
the Lie algebra 1-cocycle $\dv: \dert_n \to \tr_n$.

\begin{proposition}
There is a unique map $j: \Dert_n \to \tr_n$ which satisfies the group cocycle condition
\begin{equation} \label{eq:j1}
j(gh)=j(g)+g\cdot j(h),
\end{equation}
and has the property
\begin{equation} \label{eq:j2}
\frac{d}{ds} \, j(\exp(su))|_{s=0} = \dv(u).
\end{equation}
\end{proposition}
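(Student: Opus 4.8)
The plan is to first extract the integral formula forced on $j$ by the two conditions, which settles uniqueness, and then to obtain existence by recognizing $j$ as the translation part of an integrated affine action of $\Dert_n$. For uniqueness, setting $g=h=\mathrm{id}$ in \eqref{eq:j1} gives $j(\mathrm{id})=0$. Fix $u\in\dert_n$ and put $\phi(s)=j(\exp(su))$. The identity $\exp(su)\exp(tu)=\exp((s+t)u)$ together with \eqref{eq:j1} yields $\phi(s+t)=\phi(s)+\exp(su)\cdot\phi(t)$; differentiating in $t$ at $t=0$ and using \eqref{eq:j2} gives the linear ODE $\phi'(s)=\exp(su)\cdot\dv(u)$ with $\phi(0)=0$, whose unique solution is $\phi(s)=\int_0^s\exp(\sigma u)\cdot\dv(u)\,d\sigma$. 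Since $\exp:\dert_n\to\Dert_n$ is a bijection, this forces $j(g)=\int_0^1\exp(s\ln g)\cdot\dv(\ln g)\,ds$, proving uniqueness. The integral is well defined because every derivation in $\dert_n$ raises degree by at least one, so in each graded component of $\tr_n$ the integrand $\exp(\sigma u)\cdot\dv(u)$ is a polynomial in $\sigma$ and the integration truncates.

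For existence I take this integral as the definition of $j$ and verify both properties, the conceptual route to \eqref{eq:j1} being to read $\dv$ through the affine Lie algebra $\mathrm{aff}(\tr_n)=\mathfrak{gl}(\tr_n)\ltimes\tr_n$. Writing $\rho(u)$ for the action $a\mapsto u\cdot a$ on $\tr_n$, the fact that $\dv$ is a $1$-cocycle (established above) is precisely what makes $\iota:u\mapsto(\rho(u),\dv(u))$ a homomorphism of Lie algebras $\dert_n\to\mathrm{aff}(\tr_n)$: the bracket $[\iota(u),\iota(v)]=([\rho(u),\rho(v)],\,u\cdot\dv(v)-v\cdot\dv(u))$ equals $\iota([u,v])$ exactly because $\dv([u,v])=u\cdot\dv(v)-v\cdot\dv(u)$. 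Since all $\rho(u)$ raise degree, the Lie subalgebra they generate is pro-nilpotent, so $\iota$ lands in a pro-unipotent subgroup of $\mathrm{Aff}(\tr_n)=GL(\tr_n)\ltimes\tr_n$ on which $\exp$ is bijective and the Campbell--Hausdorff product converges degree by degree. Hence $\iota$ integrates to a group homomorphism $J=\exp_{\mathrm{Aff}}\circ\iota\circ\ln:\Dert_n\to\mathrm{Aff}(\tr_n)$ whose linear part is the action $g\mapsto g\cdot(-)$ and whose translation part is exactly the integral above. Writing $J(g)=(\rho(g),j(g))$, the homomorphism property $J(gh)=J(g)J(h)$ reads $(\rho(gh),j(gh))=(\rho(g)\rho(h),\,j(g)+g\cdot j(h))$, which is \eqref{eq:j1}; and \eqref{eq:j2} holds because $J$ integrates $\iota$, so $\frac{d}{ds}j(\exp(su))|_{s=0}$ is the translation component of $\iota(u)$, namely $\dv(u)$.

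Alternatively one can bypass the affine group and verify \eqref{eq:j1} directly from the integral by an ODE in the second variable: for fixed $g$ set $\Delta(s)=j(g\exp(sv))-j(g)-g\cdot j(\exp(sv))$, note $\Delta(0)=0$, and show $\Delta'(s)=0$ using the right-translation formula $\frac{d}{ds}|_{s=0}\,j(g\exp(sv))=g\cdot\dv(v)$, itself obtained by differentiating the integral together with the Baker--Campbell--Hausdorff expansion of $\ln(g\exp(sv))$. The one delicate step in either route is the passage from a Lie algebra cocycle to a group cocycle with infinite-dimensional target; this is harmless here only because the grading makes $\dert_n$ pro-nilpotent and forces every derivation to raise degree, so that all exponentials, the Campbell--Hausdorff product, and the defining integral converge and truncate in each fixed degree. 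I expect this bookkeeping, rather than any genuinely new idea, to be the main point requiring care.
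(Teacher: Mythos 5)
Your proof is correct and takes essentially the same route as the paper: uniqueness there follows from the same one-parameter-subgroup ODE (the paper writes it as $\frac{d}{ds}\,j(\exp(su))=\dv(u)+u\cdot j(\exp(su))$ with closed-form solution $j(\exp(u))=\frac{e^u-1}{u}\cdot\dv(u)$, which is exactly your integral $\int_0^1 e^{su}\cdot\dv(u)\,ds$). For existence the paper performs the same semidirect-product trick, promoting the cocycle $\dv$ to the Lie algebra homomorphism $u\mapsto u+\dv(u)$ into the semidirect sum of $\dert_n$ and $\tr_n$ and defining $j$ by $\exp(u+\dv(u))=\exp(j(\exp(u)))\exp(u)$; your affine group $\mathrm{Aff}(\tr_n)$ is this same construction pushed forward along the action of $\dert_n$ on $\tr_n$.
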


\begin{proof}
Let $\g$ be a semi-direct sum of $\dert_n$ and $\tr_n$. The cocycle property
of the divergence implies that the map $\dert_n \to \g$ defined by formula
$u\mapsto u+\dv(u)$ is a Lie algebra homomorphism. Define $j(\exp(u))$ by 
formula $\exp(u+\dv(u))=\exp(j(\exp(u)))\exp(u)$. For $g=\exp(u)$ and 
$h=\exp(v)$ we
have
$$
\exp(j(gh))gh=(\exp(j(g))g)(\exp(j(h))h)=\exp(j(g)+g\cdot j(h))gh
$$
which implies \eqref{eq:j1}. 

Equations \eqref{eq:j1} and \eqref{eq:j2} imply the following 
differential equation for $j$:
$$
\frac{d}{ds} \, j(\exp(su)) = \dv(u) + u\cdot j(\exp(su)).
$$
Given the initial condition $j(e)=0$, this equation admits a
unique solution,
$$
j(\exp(u)) = \, \frac{e^u-1}{u} \, \cdot \dv(u)
$$
which proves uniqueness of the cocycle $j$.
\end{proof}

\begin{remark}
Equation \eqref{eq:j1} for $h=g^{-1}$ implies $j(g^{-1})= - g^{-1}\cdot j(g)$.
\end{remark}

\begin{proposition}
The group $\KV_n$ is isomorphic to a subgroup of $\Sder_n$ 
singled out by the condition $j(g)=0$.
\end{proposition}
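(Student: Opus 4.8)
The plan is to show that the two subgroups coincide as subsets of $\Sder_n$, using the explicit formula $j(\exp(u)) = \frac{e^u-1}{u}\cdot \dv(u)$ established in the preceding proposition. First I would record the structural facts that require no real work. Since $\kv_n = \{u \in \sder_n : \dv(u)=0\}$ is a Lie subalgebra of $\sder_n$, the exponential map identifies $\KV_n = \exp(\kv_n)$ with a subgroup of $\Sder_n = \exp(\sder_n)$, the group law being the Campbell-Hausdorff multiplication in both cases. On the other side, the group cocycle property \eqref{eq:j1} shows that $\{g \in \Sder_n : j(g)=0\}$ is itself a subgroup: if $j(g)=j(h)=0$ then $j(gh)=j(g)+g\cdot j(h)=0$, one has $j(e)=0$, and $j(g^{-1}) = -g^{-1}\cdot j(g) = 0$ by the Remark following the construction of $j$.

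It then remains to prove the equality of underlying sets. Writing $g=\exp(u)$ with $u \in \sder_n$ (recall $\exp$ is a bijection onto $\Sder_n$), the forward inclusion is immediate: if $u \in \kv_n$ then $\dv(u)=0$, and the formula for $j$ gives $j(g)=\frac{e^u-1}{u}\cdot \dv(u)=0$. The reverse inclusion is the crux of the statement. Assuming $j(g)=0$, that is $\frac{e^u-1}{u}\cdot \dv(u)=0$, I must deduce $\dv(u)=0$.

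Here I would exploit the positivity of the grading. The operator $\frac{e^u-1}{u} = \mathrm{id} + \sum_{k\geq 1}\frac{u^k}{(k+1)!}$ acts on $\tr_n$ through the action of $u$, and since every nonzero homogeneous component of $u \in \sder_n$ has degree at least one, the action of $u$ strictly raises the degree in $\tr_n$. Hence $\sum_{k\geq 1}\frac{u^k}{(k+1)!}$ is degree-raising, so on the degree completion $\tr_n$ it is topologically nilpotent and $\frac{e^u-1}{u}$ is invertible. Applying the inverse to $\frac{e^u-1}{u}\cdot\dv(u)=0$ forces $\dv(u)=0$, so $u \in \kv_n$ and $g \in \KV_n$. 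Equivalently, one may argue by lowest degree: if $d$ is minimal with the homogeneous component $\dv(u)_d \neq 0$, then the degree-$d$ part of $j(g)$ equals $\dv(u)_d$, because the higher-order terms of $\frac{e^u-1}{u}$ contribute only in degrees exceeding $d$; thus $j(g)=0$ at degree $d$ is a contradiction.

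This establishes $\KV_n = \{g \in \Sder_n : j(g)=0\}$ as subgroups of $\Sder_n$, which is the assertion. The only genuinely substantive point is the invertibility of the operator $\frac{e^u-1}{u}$ on $\tr_n$, and I expect this to be the main (though mild) obstacle; it is ultimately a direct consequence of the grading on $\sder_n$ starting in degree one, so no delicate estimate is needed.
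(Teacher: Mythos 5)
Your proof is correct and takes essentially the same route as the paper: the paper also deduces both inclusions from the formula $j(\exp(u))=\frac{e^u-1}{u}\cdot\dv(u)$, simply writing $\dv(u)=\frac{u}{e^u-1}\cdot j(g)=0$ for the reverse direction. Your explicit justification that $\frac{e^u-1}{u}$ is invertible on $\tr_n$ (via the positive grading of $\sder_n$) merely fills in a step the paper leaves implicit.
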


\begin{proof}
Let $u \in \kv_n$. Then, $\dv(u)=0$ implies $j(\exp(u))=0$ and
$\exp(u) \in \KV_n$. In the other direction, $j(g)=0$ for 
$g=\exp(u)$ implies $\dv(u)=u/(e^u-1)\cdot j(g)=0$, and
$u\in \kv_n$.
\end{proof}

\begin{proposition}   \label{prop:j(g)}
Let $g \in \HKV_2$. Then, $j(g) \in {\rm im}(\delta)$.
\end{proposition}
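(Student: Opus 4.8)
The plan is to pass from the group element $g$ to its logarithm and feed it into the closed formula $j(\exp(u))=\frac{e^u-1}{u}\cdot\dv(u)$ obtained in the preceding proposition; the whole statement then follows once one checks that this expression collapses to its leading term. So I would begin by writing $g=\exp(u)$ with $u\in\hkv_2\subset\sder_2$, and record the two facts that the definition of $\hkv_2$ supplies: first $\dv(u)\in{\rm ker}(\delta)$, and second that $u$ is special, i.e. $u(x+y)=0$.

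The key step is to prove $u\cdot\dv(u)=0$. By Theorem~\ref{th:hdelta} the cohomology $H^2(\tr,\delta)$ vanishes, so ${\rm ker}(\delta:\tr_2\to\tr_3)={\rm im}(\delta:\tr_1\to\tr_2)$ and there is an $h\in\tr_1$ with $\dv(u)=\delta h=\Tr(h(x)-h(x+y)+h(y))$. I would then observe that any $u=(a,b)\in\sder_2$ annihilates every coboundary: writing $u(x)=[x,a]$ one gets $u\cdot\Tr(h(x))=\Tr(h'(x)[x,a])=\Tr([h'(x)x,a])=0$, and likewise $u\cdot\Tr(h(y))=0$, while $u\cdot\Tr(h(x+y))=0$ because $u(x+y)=0$. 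This is exactly the computation already used to establish $[\hkv_2,\hkv_2]\subset\kv_2$, so $u\cdot{\rm im}(\delta)=0$ and in particular $u\cdot\dv(u)=0$.

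Finally I would feed this into the closed formula. Since $\frac{e^u-1}{u}=\sum_{k\geq 0}\frac{u^k}{(k+1)!}$ and $u^k\cdot\dv(u)=u^{k-1}\cdot(u\cdot\dv(u))=0$ for every $k\geq 1$, the series truncates after its first term, giving $j(g)=j(\exp(u))=\dv(u)=\delta h\in{\rm im}(\delta)$, as claimed. Alternatively one can bypass the closed formula entirely: the cocycle differential equation $\frac{d}{ds}j(\exp(su))=\dv(u)+u\cdot j(\exp(su))$ is solved by $s\mapsto s\,\dv(u)$, because $u\cdot(s\,\dv(u))=0$, and uniqueness with $j(e)=0$ forces $j(\exp(u))=\dv(u)$.

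I do not expect a serious obstacle: the only substantive point is the vanishing $u\cdot{\rm im}(\delta)=0$, which reduces to the two elementary facts that $\Tr$ kills commutators and that special derivations preserve $x+y$. The one thing to watch is that $u\cdot\dv(u)=0$ genuinely uses the special-derivation hypothesis, through the term $\Tr(h(x+y))$, so that the argument is really about $\hkv_2\subset\sder_2$ and not merely about $\dert_2$.
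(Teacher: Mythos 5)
Your proof is correct and follows essentially the same route as the paper: both write $\dv(u)=\delta h\in{\rm im}(\delta)$, show $u\cdot\dv(u)=0$ via the three trace vanishings (with $u\cdot\Tr(h(x+y))=0$ coming from speciality), and conclude $j(g)=\frac{e^u-1}{u}\cdot\dv(u)=\dv(u)$. Your explicit computation $u\cdot\Tr(h(x))=\Tr(h'(x)[x,a])=0$ is exactly the justification the paper gives earlier when proving $[\hkv_2,\hkv_2]\subset\kv_2$, so nothing is missing.
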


\begin{proof}
Let $u \in \hkv_2$. Then, $\dv(u)=\Tr(f(x)+f(y)-f(x+y))$ with
$f \in x^2 \k[[x]]$. Note that $u \cdot \Tr(f(x))=u \cdot \Tr(f(y))=0$
since $u$ acts as an inner derivation on $x$ and as a (different) inner
derivation on $y$. Furthermore, $u \cdot \Tr(f(x+y))=0$ because
$u(x+y)=0$. Hence, $u \cdot \dv(u)=0$, and
$j(\exp(u))=(e^u-1)/u \cdot \dv(u)=\dv(u) \in {\rm im}(\delta)$.
\end{proof}

\subsection{Scaling transformations}
For $0 \neq s \in \k$ consider an automorphism $A_s$ of the free Lie
algebra $\lie_n$ such that $A_s: x_i \mapsto sx_i$ for all $i=1,\dots, n$.
We have $A_{s_1} A_{s_2}=A_{s_1+s_2}$, $(A_s)^{-1}=A_{s^{-1}}$,
and $A_1=e$. For example, we compute
$$
A_s(\ch(x,y))=\ch(sx,sy)=s\ch_s(x,y).
$$
Note that for $g \in \Dert_n$ an automorphism $g_s=A_s g A_s^{-1}$ is also
an element of $\Dert_n$. Indeed, $g(x_i)=g_i(x_i)=e^a x_i e^{-a}$, where
$g_i$ is an inner automorphism of $\lie_n$ given by conjugation by $e^a$
for $a \in \lie_n$. Then, 
$$
g_s(x_i)=A_s g A_s^{-1}(x_i)=s^{-1} A_s g(x_i)=e^{A_s(a)} x_i e^{-A_s(a)}
$$
proving $g_s \in \Dert_n$. Moreover, since $a_s=A_s(a)$ is analytic
in $s$ with $a_0=0$, we conclude that $g_s$ is also analytic in $s$ with
$g_0=e$.
We shall denote the derivative of $g_s$ with respect to 
the scaling parameter $s$ by $\dot{g}_s$.

\begin{proposition}
Let $g \in \Dert_n$. Then, $u_s:=\dot{g_s}g_s^{-1}$ has the property 
$u_s=s^{-1} A_s u A_s^{-1}$, where $u=u_1$. 
\end{proposition}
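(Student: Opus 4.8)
The plan is to exploit the fact that the scaling automorphisms form a (multiplicative) one-parameter group, and to rewrite $g_s$ at a rescaled parameter as a conjugate of $g_s$ at the original parameter. Directly from the definition $A_s : x_i \mapsto s x_i$ one has $A_s A_t = A_{st}$, so that
\[
g_{st} = A_{st}\, g\, A_{st}^{-1} = A_s \bigl(A_t\, g\, A_t^{-1}\bigr) A_s^{-1} = A_s\, g_t\, A_s^{-1}.
\]
This single identity is the engine of the whole argument; everything else is differentiation. (The analyticity of $g_s$ in $s$ established just above guarantees $\dot{g}_s$ is well defined, and $u_s=\dot{g}_s g_s^{-1}$, being the logarithmic derivative of a one-parameter family of automorphisms, is genuinely a derivation; no real work is hidden in either point.)

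Next I would differentiate this identity in $t$ at $t=1$. On the right-hand side $A_s$ and $A_s^{-1}$ are constant in $t$, so the derivative is $A_s \dot{g}_1 A_s^{-1}$. On the left-hand side I set $\sigma = st$ and apply the chain rule, $\frac{d}{dt} g_{st}\big|_{t=1} = s\,\dot{g}_\sigma\big|_{\sigma=s} = s\,\dot{g}_s$. The factor $s$ produced here is exactly what inverts to the prefactor $s^{-1}$ in the claim, and getting this bookkeeping right is the only delicate point in the whole proof. Comparing the two sides gives $\dot{g}_s = s^{-1} A_s \dot{g}_1 A_s^{-1}$.

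Finally I would combine this with the identity $g_s^{-1} = A_s g^{-1} A_s^{-1}$, which is immediate from $g_s = A_s g A_s^{-1}$ together with $g = g_1$. Then
\[
u_s = \dot{g}_s\, g_s^{-1} = \bigl(s^{-1} A_s \dot{g}_1 A_s^{-1}\bigr)\bigl(A_s\, g^{-1} A_s^{-1}\bigr) = s^{-1} A_s\, \dot{g}_1 g^{-1}\, A_s^{-1} = s^{-1} A_s\, u\, A_s^{-1},
\]
where the inner pair $A_s^{-1} A_s$ cancels and $u = u_1 = \dot{g}_1 g_1^{-1} = \dot{g}_1 g^{-1}$. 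This is precisely the asserted formula. I expect no genuine obstacle beyond correctly tracking the chain-rule factor $s$; the argument is purely formal once the conjugation identity $g_{st}=A_s g_t A_s^{-1}$ is in hand.
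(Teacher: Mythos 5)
Your proof is correct, but it follows a genuinely different route from the paper's. The paper introduces the Euler derivation $l$ defined by $l(x_i)=x_i$, observes that $\dot{A}_s A_s^{-1}=s^{-1}l$, and computes directly
\[
u_s=\dot{g}_s g_s^{-1}=s^{-1}\bigl(l-g_s l g_s^{-1}\bigr)=s^{-1}A_s\bigl(l-glg^{-1}\bigr)A_s^{-1},
\]
so the scaling identity falls out together with the closed formula $u=l-glg^{-1}$. You instead avoid $l$ entirely: you use only the one-parameter group law $A_sA_t=A_{st}$ (note this silently corrects a typo in the paper, which writes $A_{s_1}A_{s_2}=A_{s_1+s_2}$ even though composing the scalings $x_i\mapsto s_ix_i$ clearly multiplies the parameters), differentiate the conjugation identity $g_{st}=A_s g_t A_s^{-1}$ in $t$ at $t=1$ with the chain rule supplying the factor $s$, and then cancel $A_s^{-1}A_s$ against $g_s^{-1}=A_sg^{-1}A_s^{-1}$. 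Your argument is cleaner and more structural --- it makes transparent that the whole proposition is just equivariance of the logarithmic derivative under a multiplicative reparametrization --- and your bookkeeping of the chain-rule factor is right. What the paper's computation buys in exchange is the explicit expression $u=\kappa(g)=l-glg^{-1}$, which is not a throwaway: it is used immediately afterwards to define the map $\kappa$, and again following equation \eqref{eq:jscaling} to derive $glg^{-1}\cdot j(g)=\dv(u)$, an identity needed in the proof of Theorem \ref{th:old=new}. So if one adopted your proof in the paper, the formula $u=l-glg^{-1}$ would still have to be established separately.
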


\begin{proof}
Let $l$ be a derivation of $\lie_n$ defined by the property
$l(x_i)=x_i$ for all $i$. We have, $\dot{A}_s A_s^{-1}=s^{-1} l$,
and
$$
u_s=\dot{g}_s g_s^{-1}=s^{-1}(l-g_slg_s^{-1})=s^{-1} A_s(l-glg^{-1})A_s^{-1} .
$$
Hence, $u=u_1=l-glg^{-1}$ and $u_s=s^{-1} A_s u A_s^{-1}$ as required.
\end{proof}

Note that $u_s=s^{-1}(a_1(sx_1,sx_2,\dots),\dots)$ is analytic in $s$ with 
$u_0$ given by the degree one component of $u$. For $g \in \Dert_n$ we denote
by $\kappa_s: \Dert_n \to \dert_n$ the map $\kappa_s: g \mapsto
u_s=s^{-1} A_s(l-glg^{-1})A_s^{-1}$, and we put $\kappa=\kappa_1$. 
Similarly, let $u \in \dert_n$, set $u_s=s^{-1} A_s u A_s^{-1}$ and denote
by $E_s: \dert_n \to \Dert_n$ the map $E_s: u \mapsto g_s$ defined as a unique 
solution of the ordinary differential equation $\dot{g}_s g_s^{-1}=u_s$
with initial condition $g_0=e$. We denote $E=E_1$.

\begin{proposition}
The maps $E$ and $\kappa$ are inverse to each other.
\end{proposition}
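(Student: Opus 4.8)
The plan is to prove both compositions $E\circ\kappa$ and $\kappa\circ E$ are the identity, and the entire argument rests on two ingredients. The first is uniqueness of solutions of the defining ODE $\dot g_s g_s^{-1}=s^{-1}A_s u A_s^{-1}$ with $g_0=e$: this holds because $\Dert_n$ integrates the positively graded $\dert_n$ and is pro-unipotent, so the equation is triangular with respect to the grading and can be solved one graded component at a time. The second is the previous proposition, which states that for $g\in\Dert_n$ the particular curve $g_s=A_sgA_s^{-1}$ satisfies exactly this ODE with $u=\kappa(g)=l-glg^{-1}$ and initial value $g_0=e$.

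First I would treat $E\circ\kappa=\mathrm{id}$, which is immediate. Fix $g\in\Dert_n$ and set $u=\kappa(g)$. By the previous proposition the curve $s\mapsto A_sgA_s^{-1}$ solves $\dot g_s g_s^{-1}=s^{-1}A_s u A_s^{-1}$ with $g_0=e$. Since $E(u)$ is, by definition, the value at $s=1$ of the unique solution of this very ODE, uniqueness forces that solution to be $s\mapsto A_sgA_s^{-1}$, so that $E(\kappa(g))=A_1gA_1^{-1}=g$.

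The converse $\kappa\circ E=\mathrm{id}$ is the substantive direction, and its key step is a scaling-covariance property of the flow. Given $u\in\dert_n$, let $\Phi_s=E_s(u)$ solve $\dot\Phi_s\Phi_s^{-1}=s^{-1}A_s u A_s^{-1}$ with $\Phi_0=e$. For a fixed scalar $\lambda$ I set $\Psi_s=A_\lambda^{-1}\Phi_{\lambda s}A_\lambda$ and compute its logarithmic derivative; using $\dot\Phi_{\lambda s}\Phi_{\lambda s}^{-1}=(\lambda s)^{-1}A_{\lambda s}uA_{\lambda s}^{-1}$ together with $A_\lambda^{-1}A_{\lambda s}=A_s$ (immediate from $A_s\colon x_i\mapsto sx_i$) one finds that $\Psi_s$ satisfies the same ODE $\dot\Psi_s\Psi_s^{-1}=s^{-1}A_suA_s^{-1}$ with the same initial value $\Psi_0=A_\lambda^{-1}\Phi_0A_\lambda=e$. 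By uniqueness $\Psi_s=\Phi_s$, that is $\Phi_{\lambda s}=A_\lambda\Phi_sA_\lambda^{-1}$; in particular, writing $g=\Phi_1=E(u)$ and putting $s=1$, the flow has the form $\Phi_\lambda=A_\lambda gA_\lambda^{-1}$. Now I apply the previous proposition to $g$: the curve $A_\lambda gA_\lambda^{-1}=\Phi_\lambda$ has logarithmic derivative $s^{-1}A_s\kappa(g)A_s^{-1}$, while by construction it equals $s^{-1}A_s u A_s^{-1}$; comparing the two at $s=1$ gives $\kappa(E(u))=\kappa(g)=u$.

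I expect the homogeneity identity $\Phi_{\lambda s}=A_\lambda\Phi_sA_\lambda^{-1}$ to be the crux: without it one cannot recognize the abstract ODE solution defining $E(u)$ as a curve of the form $A_sgA_s^{-1}$, and hence cannot invoke the previous proposition. The remaining points — analyticity of $u_s=s^{-1}A_suA_s^{-1}$ at $s=0$, with $u_0$ the degree-one part of $u$, and well-posedness of the ODE in the degree completion — are routine and follow from solving the equation degree by degree.
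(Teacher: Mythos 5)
Your proof is correct and takes essentially the same route as the paper: both directions reduce to uniqueness of solutions of the ODE $\dot g_s g_s^{-1}=u_s$, combined with the preceding proposition identifying $\kappa(g)$ as the logarithmic derivative of $s\mapsto A_s g A_s^{-1}$. The only difference is that you explicitly prove the scaling covariance $E_s(u)=A_s\,E(u)\,A_s^{-1}$ via the substitution $\Psi_s=A_\lambda^{-1}\Phi_{\lambda s}A_\lambda$, a step the paper asserts without justification when it writes $g_s=A_s g A_s^{-1}=E_s(u)$ in the direction $\kappa\circ E=\mathrm{id}$ — a worthwhile addition, since without it that assertion would appear to presuppose $\kappa(g)=u$.
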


\begin{proof}
Let $g \in \Dert_n$ and consider $u=\kappa(g)$. Then, $u_s=s^{-1} A_s u A_s^{-1}=\kappa_s(g)$
and $g_s=A_s g A_s^{-1}$ is a solution of the ordinary differential equation (ODE)
$\dot{g}_s=u_sg_s$ with initial condition $g_0=e$. But so does $E_s(u)$. Hence,
by the uniqueness property for solutions of ODEs, we have $g=E(u)=E(\kappa(g))$. 
In the other direction,
let $u \in \dert_n$ and consider $g=E(u)$. Then, $g_s=A_s g A_s^{-1}=E_s(u)$ and
$\kappa_s(g)=\dot{g}_sg_s^{-1}=u_s$. Hence, $\kappa(E(u))=u$ as required.
\end{proof}

Automorphisms $A_s$ extend from $\lie_n$ to $\Ass_n$ and to $\tr_n$.
Note that for $u \in \dert_n$ and $u_s=s^{-1} A_s u A_s^{-1}$ we have
$\dv(u_s)=s^{-1} A_s \cdot \dv(u)$. Similarly, for $g \in \Dert_n$
and $g_s = A_s g A_s^{-1}$ we obtain $j(g_s)=A_s \cdot j(g)$.

\begin{proposition}
Let $g \in \Dert_n$ and  $u=\kappa(g)$. Then,
\begin{equation}  \label{eq:jscaling}
\frac{d j(g_s)}{ds}=u_s \cdot j(g_s) + \dv(u_s) . 
\end{equation}
\end{proposition}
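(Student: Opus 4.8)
The plan is to obtain the differential equation \eqref{eq:jscaling} by differentiating the group cocycle relation \eqref{eq:j1} along the path $s \mapsto g_s$. The essential input is that, by the very definition of $\kappa_s$, the element $u_s = \kappa_s(g) = \dot{g}_s g_s^{-1}$ is the right logarithmic derivative of the path $g_s$ in the group $\Dert_n$. Since $g_s$ is analytic in $s$ (as established just above) and satisfies $\dot{g}_s g_s^{-1} = u_s$, we have for small $\epsilon$ the first-order expansion
$$
g_{s+\epsilon} = \exp(\epsilon u_s)\, g_s + O(\epsilon^2)
$$
in $\Dert_n$, where $u_s$ is held fixed at its value at the given $s$, and $\exp(\epsilon u_s)$ makes sense because $u_s \in \dert_n$ integrates into $\Dert_n$.

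First I would feed this expansion into the cocycle relation \eqref{eq:j1}. Because $j$ is differentiable in each fixed degree of the grading, perturbing the argument by a term of size $O(\epsilon^2)$ changes the value of $j$ only by $O(\epsilon^2)$, so that
$$
j(g_{s+\epsilon}) = j\bigl(\exp(\epsilon u_s)\, g_s\bigr) + O(\epsilon^2) = j(\exp(\epsilon u_s)) + \exp(\epsilon u_s)\cdot j(g_s) + O(\epsilon^2).
$$
Next I would expand the two surviving terms to first order in $\epsilon$. The infinitesimal property \eqref{eq:j2} gives $j(\exp(\epsilon u_s)) = \epsilon\, \dv(u_s) + O(\epsilon^2)$, while the action expands as $\exp(\epsilon u_s)\cdot j(g_s) = j(g_s) + \epsilon\, u_s\cdot j(g_s) + O(\epsilon^2)$. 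Subtracting $j(g_s)$, dividing by $\epsilon$, and letting $\epsilon \to 0$ then produces exactly $\frac{d j(g_s)}{ds} = \dv(u_s) + u_s\cdot j(g_s)$, which is \eqref{eq:jscaling}.

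The one genuinely delicate point is justifying the first-order expansion $g_{s+\epsilon} = \exp(\epsilon u_s) g_s + O(\epsilon^2)$ and, correlatively, verifying that the $O(\epsilon^2)$ remainders contribute nothing after passing through $j$ and the cocycle relation. This is where the analyticity of $s \mapsto g_s$ (equivalently, of $u_s = s^{-1} A_s u A_s^{-1}$, with $u_0$ its degree-one component) is used, and where one works in each fixed degree so that the expansions are finite sums and the remainder estimates are literal rather than merely formal. As a cross-check, I would also note the purely computational alternative of differentiating the closed formula $j(g_s) = A_s \cdot j(g)$ directly, using $\frac{d}{ds} A_s = s^{-1} l\, A_s$ on $\tr_n$ (with $l$ the Euler derivation $l(x_i)=x_i$) together with $\dv(u_s) = s^{-1} A_s\cdot \dv(u)$; I expect this to reproduce the same identity but to be less transparent, so I would treat it only as a confirmation of the cocycle-differentiation argument above.
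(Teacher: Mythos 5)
Your proof is correct and is essentially the paper's own argument: the paper applies the cocycle identity \eqref{eq:j1} to the factorization $g_q=(g_qg_s^{-1})g_s$ and differentiates at $q=s$, which is precisely your first-order expansion $g_{s+\epsilon}=\exp(\epsilon u_s)\,g_s+O(\epsilon^2)$ combined with \eqref{eq:j2} for the increment factor and the linearization of the action on $j(g_s)$. One caution about your proposed cross-check: differentiating $j(g_s)=A_s\cdot j(g)$ only yields $s^{-1}\,l\cdot j(g_s)$, and since $u_s=s^{-1}(l-g_slg_s^{-1})$, matching this with the right-hand side of \eqref{eq:jscaling} requires the identity $\dv(u)=glg^{-1}\cdot j(g)$, which the paper derives \emph{from} this proposition, so that route would be circular as stated and is rightly kept out of the main argument.
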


\begin{proof}
We compute
$$
j(g_q)=j(g_q g_s^{-1} g_s) = j(g_q g_s^{-1}) + (g_q g_s^{-1})\cdot j(g_s).
$$
Taking a derivative with respect to $q$ and putting $q=s$ yields the equation 
\eqref{eq:jscaling}, as required.
\end{proof}

For $g=E(u)$, equation \eqref{eq:jscaling} at $s=1$ implies the following
relation between $j(g)$ and $\dv(u)$: $l \cdot j(g)= u\cdot j(g) + \dv(u)$.
By using equation $u=l-glg^{-1}$ we obtain $glg^{-1} \cdot j(g)=\dv(u)$.

\subsection{The generalized Kashiwara-Vergne problem}
The generalized Kashiwara-Vergne (KV) problem is the following question:

\vskip 0.2cm
\textbf{Generalized KV problem:} Find an element $F\in \Dert_2$ 
with the properties
\begin{equation}  \label{eq:newkv1}
F(x+y)=\ch(x,y),
\end{equation} 
and
\begin{equation}  \label{eq:newkv2}
j(F) \in {\rm im}(\tilde{\delta}) .
\end{equation}
\vskip 0.2cm
We shall denote the set of solutions of the generalized 
KV problem by $\Sol(\HKV)$. For any $s\in \k$ one can introduce
rescaled versions of equations \eqref{eq:newkv1} and \eqref{eq:newkv2} as
$F(x+y)=\ch_s(x,y)$ and $j(F) \in {\rm im}(\tidelta_s)$. 
We shall denote the corresponding set of solutions by 
$\Sol_s(\HKV)$. For $s=0$, $\Sol_0(\HKV)=\HKV_2$. For
all $s\neq 0$, $\Sol_s(\HKV) \cong \Sol(\HKV)$ with
isomorphism given by the scaling transformation 
$F \mapsto F_s=A_s F A_s^{-1}$.

\begin{proposition}
Let $F\in \Sol(\HKV)$ and $a \in \tr_1$. Then,
$\tilde{\delta} a=F\cdot (\delta a)$.
\end{proposition}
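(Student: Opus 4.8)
The plan is to evaluate both sides explicitly and match them term by term, without ever needing the divergence condition \eqref{eq:newkv2}; only the tangentiality of $F$ and the defining relation \eqref{eq:newkv1} will be used. First I would record what the two differentials do on $\tr_1 \cong x\,\k[[x]]$. Writing $a=\sum_{k\geq 1} a_k \Tr(x^k)$ and setting $a(w):=\sum_{k\geq 1} a_k \Tr(w^k)$ for $w\in\Ass_2$, the $n=1$ instances of \eqref{eq:delta} and \eqref{eq:tidelta} (as computed in Example \ref{ex:deltatr}) give $\delta a = a(x)+a(y)-a(x+y)$ and $\tidelta a = a(x)+a(y)-a(\ch(x,y))$ in $\tr_2$. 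Hence the claim reduces to checking the three identities $F\cdot a(x)=a(x)$, $F\cdot a(y)=a(y)$, and $F\cdot a(x+y)=a(\ch(x,y))$.

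The key observation is that $F\in\Dert_2$ is tangential, so that as an automorphism of $\Ass_2$ it sends $x\mapsto \Phi_1 x\Phi_1^{-1}$ and $y\mapsto \Phi_2 y\Phi_2^{-1}$ for invertible $\Phi_1,\Phi_2\in\Ass_2$ (exponentials of Lie elements). Since the action of $F$ on $\tr_2$ is induced by this algebra automorphism, I would compute $F\cdot \Tr(x^k)=\Tr\!\big((\Phi_1 x\Phi_1^{-1})^k\big)=\Tr(\Phi_1 x^k\Phi_1^{-1})=\Tr(x^k)$, where the last equality uses the defining trace identity $\Tr(uv)=\Tr(vu)$ applied degree by degree. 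Summing over $k$ yields $F\cdot a(x)=a(x)$, and the identical argument gives $F\cdot a(y)=a(y)$. For the remaining term I would invoke linearity of the automorphism $F$ together with \eqref{eq:newkv1}: because $F(x+y)=F(x)+F(y)=\ch(x,y)$, one gets $F\cdot \Tr\!\big((x+y)^k\big)=\Tr\!\big(F(x+y)^k\big)=\Tr(\ch(x,y)^k)$, and therefore $F\cdot a(x+y)=a(\ch(x,y))$.

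Assembling the three pieces gives $F\cdot(\delta a)=a(x)+a(y)-a(\ch(x,y))=\tidelta a$, which is the assertion.

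The only genuinely delicate point is the justification that $F$ acts on the trace space $\tr_2$ through the \emph{algebra} automorphism of $\Ass_2$ and that conjugation by $\Phi_1,\Phi_2$ drops out under $\Tr$; this is precisely where tangentiality of $F$ (as opposed to $F$ being an arbitrary element of $\Sder_2$) is essential, since for a general special automorphism $F(x)$ need not be a conjugate of $x$ in $\Ass_2$. Everything else is bookkeeping with the explicit formulas for $\delta$ and $\tidelta$ on $\tr_1$ and the single relation $F(x+y)=\ch(x,y)$.
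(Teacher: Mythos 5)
Your proof is correct and follows essentially the same route as the paper: there, too, $\delta a$ is split into the three terms $\Tr(f(x))$, $\Tr(f(y))$, $\Tr(f(x+y))$, the first two are fixed because $F$ acts as (two different) inner automorphisms on $x$ and on $y$ so conjugation drops out under $\Tr$, and the third is handled by $F(x+y)=\ch(x,y)$. Your explicit verification $\Tr\bigl((\Phi_1 x \Phi_1^{-1})^k\bigr)=\Tr(\Phi_1 x^k \Phi_1^{-1})=\Tr(x^k)$ merely spells out the one-line justification the paper gives, and your observation that the divergence condition \eqref{eq:newkv2} is never used is likewise consistent with the paper's argument.
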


\begin{proof}
We have, $a=\Tr(f(x))$ for some formal power series $f$.
We compute 
$$
\begin{array}{lll}
F\cdot (\delta a) & = & F\cdot \Tr(f(x)-f(x+y)+f(y)) \\
& = & \Tr(f(x)-f(\ch(x,y))+f(y))=\tilde{\delta} a.
\end{array}
$$
Here we used that 
$F\cdot \Tr(f(x))=\Tr(f(x))$ and  $F\cdot \Tr(f(y))=\Tr(f(y))$ 
since $F$ acts as an inner automorphism on $x$ and as a (different)
inner automorphism on $y$. We also used that $F\cdot \Tr(f(x+y))=
\Tr(f(\ch(x,y)))$ because  $F(x+y)=\ch(x,y)$.
\end{proof}

The fact that $\Sol(\HKV)$ is non empty has been proved in \cite{am}.
We shall give an alternative proof in the end of the paper.
In order to preserve the logic of the presentation, we shall
not be using the existence of solutions of the KV problem until
we prove it.

\begin{theorem}  \label{th:solhkv}
Assume that $\Sol(\HKV)$ is nonempty. Then, the group $\HKV_2$ acts 
on $\Sol(\HKV)$ by multiplications on the right. This action is free 
and transitive.
\end{theorem}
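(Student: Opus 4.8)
The plan is to let $\HKV_2$ act on $\Sol(\HKV)$ by right composition, $F\cdot g:=F\circ g$, and to check the three requirements in turn. Because composition of automorphisms is associative and $\HKV_2\subset\Dert_2$ is a group under composition, the identity $(F\cdot g)\cdot h=F\cdot(gh)$ is automatic, so the only nontrivial point in calling this an action is that $F\circ g$ again solves the KV problem.

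For well-definedness, take $F\in\Sol(\HKV)$ and $g\in\HKV_2\subset\Sder_2$. Then $(F\circ g)(x+y)=F(g(x+y))=F(x+y)=\ch(x,y)$, using $g(x+y)=x+y$, so \eqref{eq:newkv1} holds. For \eqref{eq:newkv2} I would invoke the group cocycle identity \eqref{eq:j1}, $j(F\circ g)=j(F)+F\cdot j(g)$. By Proposition \ref{prop:j(g)}, $j(g)=\delta a$ for some $a\in\tr_1$, and by the Proposition preceding the theorem $F\cdot\delta a=\tidelta a$; hence $F\cdot j(g)\in{\rm im}(\tidelta)$, and since $j(F)\in{\rm im}(\tidelta)$ by hypothesis, $j(F\circ g)\in{\rm im}(\tidelta)$. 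Thus $F\circ g\in\Sol(\HKV)$. Freeness is immediate: if $F\circ g=F$ then, $F$ being invertible in $\Dert_2$, $g$ is the identity.

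Transitivity carries the real content. Given $F_1,F_2\in\Sol(\HKV)$, I set $g:=F_1^{-1}\circ F_2\in\Dert_2$, so that $F_1\cdot g=F_2$ by construction, and I must prove $g\in\HKV_2$. First $g(x+y)=F_1^{-1}(F_2(x+y))=F_1^{-1}(\ch(x,y))=F_1^{-1}(F_1(x+y))=x+y$, so $g\in\Sder_2$. Next, the cocycle identity applied to $F_2=F_1\circ g$ gives $F_1\cdot j(g)=j(F_2)-j(F_1)\in{\rm im}(\tidelta)$; writing this as $\tidelta a$ and using $\tidelta a=F_1\cdot\delta a$ together with the invertibility of the $\Dert_2$-action on $\tr_2$, I conclude $j(g)=\delta a\in{\rm im}(\delta)$.

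The last and most delicate step is to show that $g\in\Sder_2$ together with $j(g)\in{\rm im}(\delta)$ forces $g\in\HKV_2$, i.e. to prove the converse of Proposition \ref{prop:j(g)}; this is where I expect the main obstacle. I would write $g=\exp(u)$ with $u\in\sder_2$ and use $\dv(u)=\frac{u}{e^u-1}\cdot j(g)$, the inverse of the formula $j(\exp(u))=\frac{e^u-1}{u}\cdot\dv(u)$. The key observation, already implicit in the proofs of Proposition \ref{prop:j(g)} and the Proposition before the theorem, is that every $u\in\sder_2$ annihilates ${\rm im}(\delta)$: for $a=\Tr(f(x))$ one has $u\cdot\Tr(f(x))=u\cdot\Tr(f(y))=0$ since $u$ is tangential, and $u\cdot\Tr(f(x+y))=0$ since $u(x+y)=0$, whence $u\cdot\delta a=0$. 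Applied to $j(g)=\delta a$ this kills every term of $\frac{u}{e^u-1}$ beyond the constant one, so $\dv(u)=j(g)=\delta a\in{\rm im}(\delta)$, giving $u\in\hkv_2$ and $g\in\HKV_2$. This yields the characterization $\HKV_2=\{g\in\Sder_2:j(g)\in{\rm im}(\delta)\}$ and completes the proof of transitivity.
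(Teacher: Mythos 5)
Your proof is correct and takes essentially the same route as the paper: the identical cocycle computations appear there verbatim ($j(Fg)=j(F)+F\cdot j(g)$ combined with $F\cdot(\delta a)=\tidelta a$ for well-definedness, freeness from invertibility of right multiplication, and $F_1\cdot j(g)=j(F_2)-j(F_1)$ for transitivity). The one place you go beyond the paper is your final step: the paper passes directly from $g\in\Sder_2$ and $j(g)\in{\rm im}(\delta)$ to $g\in\HKV_2$, tacitly using the converse of Proposition \ref{prop:j(g)}, which you supply explicitly and correctly via $\dv(u)=\frac{u}{e^u-1}\cdot j(g)$ together with the observation that every $u\in\sder_2$ annihilates ${\rm im}(\delta)$ --- thereby making explicit the characterization $\HKV_2=\{g\in\Sder_2:\, j(g)\in{\rm im}(\delta)\}$, the analogue of which the paper proves only for $\KV_n$.
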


\begin{proof}
Let $F \in \Sol(\HKV)$ and $g\in \HKV_2$. Then,
$(Fg)(x+y)=F(g(x+y))=F(x+y)=\ch(x,y)$ and
$j(Fg)=j(F)+F\cdot j(g)$. Note that $j(F) \in {\rm im}(\tilde{\delta})$
and, by Proposition \ref{prop:j(g)}, $j(g) \in {\rm im}(\delta)$. 
Hence, $F\cdot  j(g) \in {\rm im}(\tilde{\delta})$
and $j(Fg) \in {\rm im}(\tilde{\delta})$. In conclusion, $\HKV_2$ acts on
the set $\Sol(\HKV)$ by right multiplications. This action is free since
the multiplication on the right is.

Let $F_1, F_2 \in \Sol(\HKV)$ and put $g=F_1^{-1}F_2$. We have,
$g(x+y)=F_1^{-1}(F_2(x+y))=F_1^{-1}(\ch(x,y))=x+y$
and $j(g)=j(F_1^{-1})+F_1^{-1}\cdot j(F_2)=
F_1^{-1} \cdot (j(F_2)-j(F_1))$. Since $j(F_1),j(F_2) \in {\rm im}(\tilde{\delta})$,
we have $F_1^{-1} \cdot (j(F_2)-j(F_1)) \in {\rm im}(\delta)$ and $g \in \HKV_2$.
Hence, the action of $\HKV_2$ on $\Sol(\HKV)$ is transitive.
\end{proof}

The Kashiwara-Vergne problem was stated in \cite{kv} in somewhat
different terms. We shall now establish a relation between our approach and the original 
formulation of the KV problem (KV conjecture).

\begin{theorem}   \label{th:old=new}
An element $F\in \Dert_2$ is a solution of the generalized KV problem
if and only if $u=\kappa(F)=(A(x,y),B(x,y))$ satisfies the following two properties,
\begin{equation}  \label{eq:oldkv1}
x+y-\ch(y,x)=(1-\exp(-\ad_x))A(x,y)+(\exp(\ad_y)-1)B(x,y),
\end{equation}
and
\begin{equation}  \label{eq:oldkv2}
\dv(u) \in {\rm im}(\tilde{\delta}).
\end{equation}
\end{theorem}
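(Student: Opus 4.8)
The plan is to verify the two defining conditions of the generalized KV problem separately: I will show that \eqref{eq:newkv1} is equivalent to \eqref{eq:oldkv1}, and then, assuming this common condition, that \eqref{eq:newkv2} is equivalent to \eqref{eq:oldkv2}. The main tool is the scaling flow $g_s=A_sFA_s^{-1}$, together with the facts established above that $\kappa$ and $E$ are mutually inverse (so $F=E(u)$ for $u=\kappa(F)=(A,B)$) and that $\dot g_s g_s^{-1}=u_s$ with $u_s=s^{-1}A_suA_s^{-1}$.

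For the first equivalence I would set $P_s=g_s(x+y)$. Since $\dot g_s=u_sg_s$, the curve $P_s$ solves the linear ODE $\dot P_s=u_s(P_s)$ with $P_0=x+y$ and $P_1=F(x+y)$; this holds regardless of whether $F$ solves the KV problem. The series $\ch_s(x,y)$ has the same initial value $\ch_0=x+y$, and the identity $g_s(x+y)=s^{-1}A_sF(x+y)$ shows that $P_s=\ch_s$ for all $s$ precisely when $F(x+y)=\ch(x,y)$. Hence, by uniqueness of solutions (applied degree by degree), \eqref{eq:newkv1} is equivalent to the assertion that $\ch_s$ itself solves $\tfrac{d}{ds}\ch_s=u_s(\ch_s)$. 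Writing $\ch_s=s^{-1}A_s\ch$ and $u_s=s^{-1}A_suA_s^{-1}$, one checks that the two sides of this ODE equal $s^{-2}$ times $[(l-1)\ch]$ and $[u(\ch)]$ respectively, both evaluated at $(sx,sy)$, where $l$ is the Euler derivation. Thus the ODE collapses to the single $s$-independent identity
\begin{equation*}
u(\ch)=(l-1)\ch.
\end{equation*}

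The remaining step is to transform $u(\ch)=(l-1)\ch$ into \eqref{eq:oldkv1}, and I expect this chain of exponential-operator manipulations to be the main computational obstacle. I would compute right logarithmic derivatives of $W=e^xe^y=e^{\ch}$: for a tangential derivation $u=(A,B)$ one gets $u(W)W^{-1}=(e^{\ad_x}-1)A+e^{\ad_x}(e^{\ad_y}-1)B$, while $l(W)W^{-1}=x+e^{\ad_x}y$; on the other hand $D(W)W^{-1}=\tfrac{e^{\ad_{\ch}}-1}{\ad_{\ch}}D(\ch)$ for any derivation $D$. Since $\tfrac{e^{\ad_{\ch}}-1}{\ad_{\ch}}\ch=\ch$, substituting $u(\ch)=(l-1)\ch$ gives
\begin{equation*}
x+e^{\ad_x}y-\ch(x,y)=(e^{\ad_x}-1)A+e^{\ad_x}(e^{\ad_y}-1)B.
\end{equation*}
Applying the invertible operator $e^{-\ad_x}$ and using $\ch(y,x)=e^{-\ad_x}\ch(x,y)$ (which follows from $e^{\ch(y,x)}=e^{-x}e^{\ch}e^{x}$) produces exactly the left-hand side $x+y-\ch(y,x)$ of \eqref{eq:oldkv1}. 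As every step is reversible, the equivalence \eqref{eq:newkv1}$\Leftrightarrow$\eqref{eq:oldkv1} follows.

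For the second equivalence I would assume \eqref{eq:oldkv1} and invoke two earlier facts: the relation $FlF^{-1}\cdot j(F)=\dv(u)$ valid for $F=E(u)$, and the Proposition $\tidelta a=F\cdot\delta a$ for $a\in\tr_1$ (whose proof uses only $F(x+y)=\ch(x,y)$), which identifies ${\rm im}(\tidelta)$ with $F\cdot{\rm im}(\delta)$ for the maps $\tr_1\to\tr_2$. Setting $k=F^{-1}\cdot j(F)\in\tr_2$, the first relation gives $F^{-1}\cdot\dv(u)=l\cdot k$. Consequently \eqref{eq:newkv2} is equivalent to $k\in{\rm im}(\delta)$, and \eqref{eq:oldkv2} is equivalent to $l\cdot k\in{\rm im}(\delta)$. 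Finally, since $\delta\colon\tr_1\to\tr_2$ is degree preserving and $l$ acts as multiplication by the degree, $l$ commutes with $\delta$ and is invertible on each graded component, so $l\,{\rm im}(\delta)={\rm im}(\delta)$; hence $k\in{\rm im}(\delta)$ if and only if $l\cdot k\in{\rm im}(\delta)$. This establishes \eqref{eq:newkv2}$\Leftrightarrow$\eqref{eq:oldkv2} and completes the proof.
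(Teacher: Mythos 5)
Your proof is correct. It shares the paper's scaffolding --- the scaling flow $g_s=A_sFA_s^{-1}$, the reduction of the first KV equation to the ODE $(d/ds-u_s)\,\ch_s(x,y)=0$, and the relation $FlF^{-1}\cdot j(F)=\dv(u)$ --- but it diverges at exactly the two places where real work is needed, and in both cases your route is more self-contained. For the first equivalence, the paper stops at the ODE and cites Lemma 3.2 of \cite{kv} for the ``straightforward calculation'' identifying it with \eqref{eq:oldkv1}; you instead collapse the ODE to the $s$-independent identity $u(\ch)=(l-1)\ch$ and derive \eqref{eq:oldkv1} by computing right logarithmic derivatives of $W=e^xe^y=e^{\ch}$. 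Your formulas $u(W)W^{-1}=(e^{\ad_x}-1)A+e^{\ad_x}(e^{\ad_y}-1)B$, $l(W)W^{-1}=x+e^{\ad_x}y$, and $\frac{e^{\ad_{\ch}}-1}{\ad_{\ch}}\,\ch=\ch$ all check out, and every step is reversible since $(e^{\ad_{\ch}}-1)/\ad_{\ch}$ and $e^{-\ad_x}$ are invertible operators, so your argument genuinely replaces the external citation. For the second equivalence, the paper proves \eqref{eq:oldkv2}$\Rightarrow$\eqref{eq:newkv2} by integrating the flow equation $\frac{d}{ds}\bigl(F_s^{-1}\cdot j(F_s)\bigr)=F_s^{-1}\cdot\dv(u_s)\in{\rm im}(\delta)$, whereas you argue algebraically: with $k=F^{-1}\cdot j(F)$ the two conditions become $k\in{\rm im}(\delta)$ and $l\cdot k\in{\rm im}(\delta)$, and since $\delta:\tr_1\to\tr_2$ preserves degree while $l$ multiplies the degree-$n$ component by $n\geq 1$ (char.\ $0$), $l$ commutes with $\delta$ and is invertible on $\tr_2$, so the two conditions are equivalent with no integration. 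Your two auxiliary inputs are legitimately available at that stage: the relation $FlF^{-1}\cdot j(F)=\dv(u)$ holds for any $F\in\Dert_2$ with $u=\kappa(F)$, and, as you rightly note, the proposition $\tidelta a=F\cdot(\delta a)$ uses only $F(x+y)=\ch(x,y)$ even though it is stated for $F\in\Sol(\HKV)$ --- a point the paper also relies on implicitly, since its own backward argument needs $F_s^{-1}\cdot\dv(u_s)\in{\rm im}(\delta)$. The one spot where you (like the paper) are terse is uniqueness for the ODE $\dot P_s=u_s(P_s)$: a phrase about degree-by-degree induction, using that $u_s$ is analytic at $s=0$ and that tangential derivations raise degree, would make it airtight, but this is cosmetic rather than a gap.
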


\begin{proof}
First, we show that equation $F(x+y)=\ch(x,y)$ is equivalent to
equation $(d/ds - u_s) \ch_s(x,y)=0$. Indeed, we have
$$
F_s(x+y)=A_s F A_s^{-1}(x+y)=s^{-1} A_sF(x+y)=s^{-1}A_s\ch(x,y)=\ch_s(x,y)
$$
and
$$
u_s (\ch_s(x,y))=\dot{F}_s F_s^{-1} (\ch_s(x,y))=
\dot{F}_s (x+y)=\frac{d}{ds} \, \left( F_s(x+y) \right)= \frac{d \ch_s(x,y)}{ds} .
$$
In the other direction,
$$
\frac{d}{ds} \, F_s^{-1}(\ch_s(x,y)) = F_s^{-1} \left( \frac{d}{ds} - u_s \right)\ch_s(x,y) =0
$$
implies that $F_s^{-1}(\ch_s(x,y))$ is independent of $s$, and 
comparison with the value at $s=0$ gives
$F_s^{-1}(\ch_s(x,y))=x+y$ or $F_s(x+y)=\ch_s(x,y)$.

A straightforward calculation (see Lemma 3.2 of \cite{kv})
shows that equation $(d/ds - u_s) \ch_s(x,y)=0$ 
is equivalent to \eqref{eq:oldkv1}.

Finally, we compare equations \eqref{eq:newkv2} and \eqref{eq:oldkv2}.
Let $F\in \Sol(\HKV)$, $j(F)=\tidelta(\Tr(f(x)))$. We compute,
$$
\begin{array}{lll}
\dv(u)=FlF^{-1}\cdot j(F) & = & FlF^{-1} \cdot \Tr(f(x)-f(\ch(x,y))+f(y)) \\
& = & Fl\cdot \Tr(f(x)-f(x+y)+f(y)) \\
& = & F\cdot \Tr(\phi(x)-\phi(x+y)+\phi(y)) \\
& = & \Tr(\phi(x)-\phi(\ch(x,y))+\phi(y)) \in {\rm im}(\tidelta),
\end{array}
$$
where $\phi=xf'(x)$ results from the action of the derivation 
$l: x^n \mapsto n x^n$. In the other direction, assume $\dv(u) \in {\rm im}(\tidelta)$.
Then, for $u_s=s^{-1} A_s u A_s^{-1}$ we have $\dv(u_s) \in {\rm im}(\tidelta_s)$.
Equation $(d/ds-u_s) j(F_s)=\dv(u_s)$ implies $d/ds (F_s^{-1} \cdot j(F_s))= 
F_s^{-1} \cdot \dv(u_s) \in {\rm im}(\delta)$. Hence, 
$ F_s^{-1} \cdot j(F_s) \in {\rm im}(\delta)$ and $j(F_s) \in {\rm im}(\tidelta_s)$.
\end{proof}

\begin{remark}
Let $\g$ be a finite dimensional Lie algebra over $\k$. Then,
$A, B \in \lie_2$ define a pair of formal power series 
on $\g \times \g$ with values in $\g$ which satisfy equation \eqref{eq:oldkv1}.
By applying the adjoint representation to the equation $\dv(u)=\tilde{\delta}(\phi)$
we obtain an equality in formal power series on $\g \times \g$ with values in $\k$,
\begin{equation} \label{eq:oldkv2'}
{\rm Tr}(\ad_x \circ d_x A + \ad_y \circ d_y B)=
{\rm Tr}(\phi(x)+\phi(y)-\phi(\ch(x,y))).
\end{equation}
Here $(d_x A)(z)=d A(x+tz,y)/dt |_{t=0}$ and 
$(d_y B)(z)=d B(x,y+tz)/dt |_{t=0}$. Indeed, for $A\in \lie_2$ consider 
$U(x,y,z)=d A(x+tz,y)/dt |_{t=0} \in \lie_3$. It has the form
$U=\ad_a(z)$ for some $a \in \Ass_2$. We compute (see equation \eqref{eq:partial}),
$$
a=\partial_z U(x,y,z)=\left( \frac{d}{dt} \, \partial_z A(x+tz,y) \right)|_{t=0}=
\partial_x A
$$
showing $\ad(\partial_x A)=d_x A$. Similarly, $\ad(\partial_y B)=d_y B$.
\end{remark}

\section{Duflo functions}  \label{sec:duflo}

Let $F \in \Sol(\HKV)$. Then, $j(F)=\Tr(f(x)-f(\ch(x,y))+f(y))$, 
and $\dv(\kappa(F))=\Tr(\phi(x)-\phi(\ch(x,y))+\phi(y))$
for $f, \phi \in x^2\k[[x]]$. We shall call $f(x)$ a Duflo function of $F$.
In this Section, we describe the set of formal power series which may arise
as Duflo functions associated to solutions of the KV problem.

\begin{proposition}  \label{prop:bernoulli}
Let $u \in \dert_2$ and assume that it satisfies equations \eqref{eq:oldkv1} and
\eqref{eq:oldkv2} with $\dv(u)=\tidelta(\Tr(\phi(x)))$. Then, the even 
part of the formal power series $\phi$ is given by the following formula
$$
\phi_{even}(x)=\frac{1}{2} \, (\phi(x)+\phi(-x)) =
\frac{1}{2} \, \sum_{k=2}^\infty \frac{B_n}{n!}\, x^n = \frac{1}{2}
\left( \frac{x}{e^x-1} - 1 + \frac{x}{2} \right) \, ,
$$
where $B_n$ are Bernoulli numbers.
\end{proposition}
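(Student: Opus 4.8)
The plan is to reduce the statement to the part of the divergence that is linear in $x$, where a single clean identity isolates each Taylor coefficient of $\phi$, and then to feed in the linearization of \eqref{eq:oldkv1}, which is precisely where the generating function $\tfrac{z}{e^z-1}$ (hence the Bernoulli numbers) enters. First I would record the extraction formula. Writing $\dv(u)=\Tr(\phi(x)+\phi(y)-\phi(\ch(x,y)))$, only the middle term contributes monomials with exactly one letter $x$, and the part of $\ch(x,y)$ linear in $x$ is $\frac{\ad_y}{e^{\ad_y}-1}\,x=\sum_{m\ge 0}\frac{B_m}{m!}\ad_y^m x$. Expanding $\Tr(\phi(\ch(x,y)))$, using cyclicity together with the identity $\sum_{i}\binom{m}{i}(-1)^i=(1-1)^m=0$ for $m\ge 1$, all contributions with $m\ge 1$ cancel, and $\Tr(xy^{n-1})$ survives with coefficient $n\phi_n$. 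Hence the coefficient of $\Tr(xy^{n-1})$ in $\dv(u)$ equals $-n\phi_n$, and it suffices to compute the linear-in-$x$ part of $\dv(u)=\Tr(x(\partial_x A)+y(\partial_y B))$.

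Next I would extract that linear part from \eqref{eq:oldkv1}. Linearizing \eqref{eq:oldkv1} in $x$ (with $y$ finite), and using that the $x$-independent parts are $A(0,y)=\alpha y$ and $B(0,y)=0$, the first-order equation reads $(e^{\ad_y}-1)B^x_1=\big(1-\tfrac{\ad_y}{1-e^{-\ad_y}}+\alpha\,\ad_y\big)x$, where $B^x_1$ is the part of $B$ linear in $x$. Since $e^{\ad_y}-1=\ad_y\cdot(\text{invertible})$ and $\ad_y$ is injective on the span of the $\ad_y^m x$, this determines $B^x_1=g(\ad_y)x$ in closed form, with $g$ built out of $\tfrac{z}{e^z-1}$. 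The same binomial cancellation as above collapses the $B$-contribution to the coefficient of $\Tr(xy^{n-1})$ to $-g_{n-1}$, while the $A$-contribution is the single coefficient $h_{n-1}:=[y^{n-1}x]A$. Thus $\phi_n=\tfrac1n(g_{n-1}-h_{n-1})$.

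The main obstacle is controlling the $A$-contribution $h_{n-1}$, which is not fixed by the first-order equation. Here I would invoke gauge invariance: by Theorem~\ref{th:solhkv} any two solutions differ by the right action of an element of $\HKV_2$, and by the Proposition that the Duflo function is odd on $\hkv_2$ (combined with $\tidelta a=F\cdot\delta a$) such an action changes the Duflo function only by an \emph{odd} series. Consequently $\phi_{even}$ is an invariant of the whole solution set, and the gauge freedom only moves the coefficients $h_m$ with $m$ even. For $n$ even the index $m=n-1$ is odd, so $h_{n-1}$ is rigid and computable from \eqref{eq:oldkv1} — equivalently from the word-reversal/flip symmetry $u=(A,B)\mapsto(-\sigma B(y,x),-\sigma A(y,x))$ which preserves the solution set and relates the $A$-data to the already-computed $B$-data.

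Assembling $\phi_{even}(x)=\sum_{n\ \mathrm{even}}\tfrac1n(g_{n-1}-h_{n-1})\,x^n$ and recognizing the result as the expansion governed by $\tfrac{z}{e^z-1}$ then yields $\phi_{even}(x)=\tfrac12\big(\tfrac{x}{e^x-1}-1+\tfrac{x}{2}\big)$. I expect the delicate point to be the bookkeeping that pins down the odd-index coefficients $h_m$ and collapses the summation to the Bernoulli generating function; the appearance of $\tfrac{z}{e^z-1}$ already in the linearized equation is what makes the Bernoulli numbers inevitable.
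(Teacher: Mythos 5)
Your first half is correct, and it is in fact the mirror image (roles of $x$ and $y$ exchanged) of what the paper does: the extraction $n\phi_n=g_{n-1}-h_{n-1}$ from the coefficient of $\Tr(xy^{n-1})$ in $\dv(u)=\tidelta(\Tr(\phi(x)))$ is right (the same binomial cancellation underlies the paper's remark that this coefficient equals $-n\phi_n$), and the first-order-in-$x$ linearization of \eqref{eq:oldkv1} does determine the $x$-linear part $B_1^x=g(\ad_y)x$ of $B$ in closed Bernoulli form, exactly as the paper determines $\alpha(\ad_x)$ from the first $s$-derivative after $y\mapsto sy$.

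The genuine gap is at the decisive step: producing the \emph{value} of $h_{odd}$. Gauge invariance (Theorem \ref{th:solhkv} together with oddness of the Duflo function on $\hkv_2$) shows at best that $\phi_{even}$, hence $h_{odd}$, is the same for all solutions; ``rigid'' is not ``computable'', and no step of your argument produces the number $h_{2k+1}$. Note that the linear-in-$x$ part of \eqref{eq:oldkv1} does not see $h$ at all --- it involves only $A(0,y)$ and $B_1^x$ --- so ``computable from \eqref{eq:oldkv1}'' tacitly means going to second order in $x$, which you never do. The proposed shortcut also fails: the actual involution is not $(A,B)\mapsto(-\sigma B(y,x),-\sigma A(y,x))$ but $\kappa(\tau(F))$ of Proposition \ref{prop:kappatauF}, with the factors $e^{\ad_x}$, $e^{-\ad_y}$ and the corrections $\pm\frac{1}{2}(\ch(x,y)-\,\cdot\,)$ (which shift the linear data by nontrivial constants); and even granting it, after symmetrizing, the swap $x\leftrightarrow y$ relates the $x$-linear data of $A$ to the \emph{$y$-linear} data of $B$ --- the paper's series $\beta$ --- not to your already-computed $g$. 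One finds $h_{odd}$ expressed through $\beta_{odd}$ up to elementary corrections, so the unknown is merely renamed. What is missing is precisely the heart of the paper's proof: expand \eqref{eq:oldkv1} to second order (in your variables, second order in $x$), encode the unknown quadratic part by a skew-symmetric generating series $b_2(u,v)$ on which $e^{\ad_y}-1$ acts as multiplication by $e^{u+v}-1$, and set $v=-u$ to annihilate it; this isolates $h_{odd}$ (in the paper, $\beta_{odd}$) and supplies the remaining Bernoulli-type terms that, combined with $\phi'_{even}$-type bookkeeping, give $\phi_{even}(x)=\frac{1}{2}\left(\frac{x}{e^x-1}-1+\frac{x}{2}\right)$. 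Without that second-order computation your scheme leaves $\phi_{even}$ undetermined.
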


\begin{proof}
We follow \cite{ap} (see Remark 4.3). Write
$A(x,y)=\alpha(\ad_x) y + \dots, B(x,y)=bx + \beta(\ad_x)y + \dots$,
where $b \in \k, \alpha, \beta \in \k[[x]]$, and $\dots$ stand for 
the terms containing at least two $y$'s.
Replace $y \mapsto sy$ in equation \eqref{eq:oldkv1}, and compute the 
first and second derivatives in $s$ at $s=0$. The first derivative yields
$$
y - \, \frac{\ad_x}{e^{\ad_x} -1 } \, y = (1-e^{-\ad_x}) \alpha(\ad_x) y
- b [x,y],
$$
and we obtain
$$
\alpha(t)=b \, \frac{t}{1-e^{-t}} \, - \, \frac{t}{(e^t-1)(1-e^{-t})} + \, \frac{1}{1-e^{-t}} .
$$
Note that elements of $\lie_2$ 
quadratic in the generator $y$ are 
in bijection with skew-symmetric formal power series in two variables,
$$ 
a(u,v)=\sum_{i,j=0}^\infty a_{i,j} u^i v^j \mapsto
\sum_{i,j=0}^\infty a_{i,j} [\ad_x^i y, \ad_x^j y]
$$
The second derivative of \eqref{eq:oldkv1} gives the following equality 
in formal power series,
$$
\frac{1}{2} \, \frac{(u+v)(e^u-e^v) - (u-v)(e^{u+v}-1)}{(e^{u+v}-1)(e^u-1)(e^v-1)} =
(1-e^{-(u+v)}) a_2(u,v) + \frac{b}{2} (u-v) + (\beta(v)-\beta(u)),
$$
where the left hand side corresponds to the second derivative of the
Campbell-Hausdorff series $-\ch(sy,x)$, and
$a_2(u,v)$ represents the second derivative of $A(x,sy)$. By putting
$v=-u$ in the last equation we obtain,
$$
\beta_{odd}(t)= \frac{b}{2} \, t - \frac{1}{2} \, \frac{t}{(e^t-1)(1-e^{-t})}
+ \frac{1}{4} \, \frac{e^t+1}{e^t-1} \, .
$$
Here $\beta_{odd}(t)=(\beta(t)-\beta(-t))/2$.

Finally, consider equation \eqref{eq:oldkv2'} and compute the contribution
linear in $y$ (that is, of the form  $\Tr(f(x) y)$) on the left hand side and 
on the right hand side.
Since we only control the odd part of the function $\beta(t)$, we obtain
an equation in odd formal power series,
$$
\beta_{odd}(t)-\alpha_{odd}(t)=-(\phi'(t))_{odd}=-(\phi_{even})'(t)
$$
which implies
$$
\phi_{even}(t)= \frac{1}{2} \left(\frac{t}{e^t-1} -1 + \frac{t}{2}\right),
$$
as required.
\end{proof}

\begin{proposition}  \label{prop:oddpowers}
Let $F\in \Sol(\HKV)$ and $f \in x^2\k[[x]]$ such that 
$j(F)=\tidelta(\Tr(f(x)))$. Then, the even part of $f(x)$ 
coincides with the function $f_{even}(x)=\frac{1}{2}\, \ln(e^{x/2}-e^{-x/2})/x)$, 
and for every odd formal power series 
$f_{odd}(x)=\sum_{k=1}^\infty f_{2k+1} x^{2k+1}$ there is an element
$F \in \Sol(\HKV)$ such that $j(F)=\tidelta(\Tr(f_{even}(x)+f_{odd}(x)))$.
\end{proposition}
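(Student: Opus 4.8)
The plan is to establish the two assertions -- rigidity of the even part and attainability of any odd part -- separately.

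\emph{The even part.} For the first statement I would pass from $F$ to the derivation $u=\kappa(F)=(A,B)$ via Theorem~\ref{th:old=new}, so that $\dv(u)\in{\rm im}(\tidelta)$. The computation carried out in the proof of that theorem identifies $\dv(u)=\tidelta(\Tr(\phi(x)))$ with $\phi(x)=xf'(x)$, the series obtained from $f$ by applying the Euler derivation $l\colon x^n\mapsto nx^n$. Since multiplication by $x$ reverses parity, the even part of $\phi$ equals $x\,(f_{even})'(x)$, so Proposition~\ref{prop:bernoulli} yields
$$(f_{even})'(x)=\frac{1}{2}\left(\frac{1}{e^x-1}-\frac{1}{x}+\frac{1}{2}\right).$$
It then remains only to integrate: using $\frac{1}{e^x-1}+\frac{1}{2}=\frac{1}{2}\coth\!\big(\tfrac{x}{2}\big)$ one recognizes the right-hand side as the derivative of $\frac{1}{2}\ln\!\big((e^{x/2}-e^{-x/2})/x\big)$, and because $f_{even}\in x^2\k[[x]]$ the integration constant vanishes, giving the claimed closed form.

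\emph{The odd part.} The strategy here is to exploit the free and transitive action of $\HKV_2$ on $\Sol(\HKV)$ from Theorem~\ref{th:solhkv}, and to track how the Duflo function changes under $F\mapsto Fg$. From $j(Fg)=j(F)+F\cdot j(g)$, together with Proposition~\ref{prop:j(g)} (which for $g=\exp(u)$ gives $j(g)=\dv(u)=\delta(\Tr(f_u))$ with $f_u$ the odd character series of $u$) and the identity $\tidelta a=F\cdot(\delta a)$, I would deduce that the Duflo function of $Fg$ is $f+f_u$. Moreover, the assignment sending $g$ to its character series (equal to $f_u$ when $g=\exp(u)$) is a homomorphism from $\HKV_2$ to the additive group of odd series: since $g(x+y)=x+y$ and $g$ acts by inner automorphisms on $x$ and on $y$, one has $g\cdot\delta(\Tr(a))=\delta(\Tr(a))$ for $a\in\tr_1$, whence the characters add under multiplication.

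Surjectivity of this character map onto $x^3\k[[x]]_{odd}$ is then supplied by the Grothendieck-Teichm\"uller generators. By Theorem~\ref{th:grt} we have $\nu(\grt)\subset\hkv_2$, and Proposition~\ref{prop:fsigma} computes the character of $\nu(\sigma_{2n+1})$ to be $-x^{2n+1}$. Since the character map is linear on $\hkv_2$, the convergent sum $u=-\sum_{n\ge1}c_{2n+1}\,\nu(\sigma_{2n+1})\in\hkv_2$ has character $\sum_{n\ge1}c_{2n+1}x^{2n+1}$. Thus, starting from any fixed $F_0\in\Sol(\HKV)$ -- whose even part is already forced to be $f_{even}$ by the first part -- with odd part $f_{odd}^0$, the element $g=\exp(u)$ with character $f_{odd}-f_{odd}^0$ carries $F_0$ to a solution whose Duflo function is exactly $f_{even}+f_{odd}$. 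The even-part integration is a mechanical consequence of Proposition~\ref{prop:bernoulli}; the part demanding the most care is checking that the characters genuinely add under the group law and that the family $\{\nu(\sigma_{2n+1})\}$ exhausts every odd degree $\ge 3$. Given Proposition~\ref{prop:fsigma}, this last point reduces to the observation that $\nu(\sigma_{2n+1})$ contributes precisely in degree $2n+1$, so that the degree completion lets us assemble the required infinite combination.
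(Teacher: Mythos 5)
Your proof is correct and takes essentially the same route as the paper: the even part is determined via $\phi(x)=xf'(x)$ and Proposition \ref{prop:bernoulli} followed by integration, and the odd part is reached by twisting a given solution with $g=\exp\bigl(-\sum_{k\ge 1}c_{2k+1}\,\nu(\sigma_{2k+1})\bigr)$, using $j(Fg)=j(F)+F\cdot j(g)$, $j(\exp(u))=\dv(u)$ for $u\in\hkv_2$, and Proposition \ref{prop:fsigma}. Your added details (the explicit antiderivative check and the additivity of characters under the group law, where the paper only needs linearity of $\dv$ on a single Lie algebra element) are harmless refinements of the same argument.
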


\begin{proof}
Let $f$ and $\phi$ be the power series in 
$j(F)=\tidelta(\Tr(f(x)))$ and $\dv(u)=\tidelta(\Tr(\phi(x)))$
for $u=\kappa(F)$. Then, we have (see the proof of Theorem \ref{th:old=new})
$\phi(s)=sf'(s)$. By Proposition  \ref{prop:bernoulli},
we obtain 
$$
f_{even}=\int \, \frac{\phi_{even}(s)}{s} \, ds = 
\frac{1}{2} \, \sum_{k=2}^\infty \, \frac{B_k}{k \cdot k!} \, s^k
=\frac{1}{2} \, \ln\left(\frac{e^{s/2}-e^{-s/2}}{s}\right) .
$$

Let $F \in \Sol(\HKV)$ with $j(F)=\tidelta(\Tr(f(x)))$, and 
$g \in \HKV_2$ with $j(g)=\delta(\Tr(h(x)))$. Then, $Fg \in \Sol(\HKV)$
and
$$
j(Fg)=j(F) + F\cdot j(g)=\tidelta(\Tr(f(x)+h(x))).
$$
Put $g=\exp(u)$ for $u\in \hkv_2$, and compute
$j(g)=(e^u-1)/u \cdot \dv(u)=\dv(u)$. By choosing
$u=-\sum_{k=1}^\infty h_{2k+1} \nu(\sigma_{2k+1})$ we obtain
$j(g)=\dv(u)=\delta(\Tr(h(x)))$ for $h(x)= \sum_{k=1}^\infty h_{2k+1} x^{2k+1}$.
Hence, by an appropriate choice of $g \in \HKV_2$, 
one can make the odd part of the linear combination
$f(x)+h(x)$ equal to any given odd power series without linear term.  
\end{proof}

\begin{remark}
The group $\HKV_2$ acts on $\Sol(\HKV)$, and this action descends
to the space of formal power series $x^2 \k[[x]]$ along the map 
$f: \Sol(\HKV) \to x^2 \k[[x]]$.
In Proposition \ref{prop:oddpowers} we have used this action to change
the odd part of $f(F)$. Previously, this action (for the
Grothendieck-Teichm\"uller subgroup $\GRT \subset \HKV_2$)
on the Duflo functions has been described in \cite{kont'} (see Theorem 7).
\end{remark}

\begin{proposition}
Let $F=\exp(u) \in \Sol(\HKV)$ with $u=(a,b) \in \dert_2$ such that
$$
\begin{array}{lll}
a(x,y) & = & a_0 y + \alpha(\ad_y) x + \dots \\
b(x,y) & = & b_0 x + \beta(\ad_y) x+ \dots  \, ,
\end{array}
$$
where $a_0,b_0 \in \k$, $\alpha,\beta \in s\k[[s]]$, and $\dots$ stand for
terms which contain at least two~$x$. Then, the Duflo function associated
to $F$ satisfies equation $f'=\beta-\alpha$.
\end{proposition}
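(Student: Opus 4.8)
The plan is to extract the derivative $f'$ by projecting the identity $j(F)=\frac{e^u-1}{u}\cdot\dv(u)$ (from the cocycle proposition) onto the ``one $x$'' part of $\tr_2$. Concretely, let $W\subset\tr_2$ be the subspace spanned by the elements $\Tr(xy^m)$, $m\ge 0$ (traces of words containing exactly one letter $x$), and identify $W$ with power series in one variable by $\Tr(xy^m)\leftrightarrow t^m$; since every word with one $x$ is cyclically of the form $xy^m$, this is a graded isomorphism. Write $P\colon\tr_2\to W$ for the projection. I will compute $P(j(F))$ in two ways and match the results.

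On the Duflo side, recall $j(F)=\tidelta(\Tr f(x))=\Tr(f(x)-f(\ch(x,y))+f(y))$. The terms $\Tr f(x)$ and $\Tr f(y)$ lie outside $W$ (they carry $0$ or $\ge 2$ letters $x$), so $P(j(F))=-P(\Tr f(\ch(x,y)))$. The linear-in-$x$ part of $\ch(x,y)$ equals $\frac{\ad_y}{e^{\ad_y}-1}x$; since $\Tr(\ad_y^n(x)y^m)=0$ for all $n\ge 1$ (by cyclicity $\Tr([y,z]y^m)=0$), only the constant term of $\frac{t}{e^t-1}$ survives and the one-$x$ part of $\Tr f(\ch(x,y))$ is $\sum_k kf_k\,\Tr(xy^{k-1})$. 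Hence $P(j(F))$ corresponds to the series $-f'(t)$.

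On the divergence side I first record three facts about the $x$-degree filtration of $\tr_2$. (i) $\dv(u)=\Tr(x\,\partial_x a+y\,\partial_y b)$ has $x$-degree $\ge 1$, since both summands carry at least one $x$ (all terms of $b$ carry an $x$, by the stated form of $b$). (ii) The derivation $u$ never lowers the $x$-degree: $u(x)=[x,a]$ always contains a letter $x$, and $u(y)=[y,b]$ does too. (iii) The \emph{crux}: the degree-preserving part of $u$ acts by zero on $W$. Indeed $P(u\cdot\Tr(xy^m))$ comes only from replacing $x$ by the exactly-one-$x$ part of $u(x)$, namely $a_0[x,y]$, giving $a_0\,\Tr([x,y]y^m)=a_0(\Tr(xy^{m+1})-\Tr(xy^{m+1}))=0$. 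Because $u$ is block upper-triangular for this filtration with vanishing $(1,1)$-block on $W$, every term $\tfrac{1}{(n+1)!}u^n\dv(u)$ with $n\ge 1$ contributes nothing to $W$, and therefore $P(j(F))=P(\dv(u))$.

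It remains to compute $P(\dv(u))$ directly from $a=a_0y+\alpha(\ad_y)x+\cdots$ and $b=b_0x+\beta(\ad_y)x+\cdots$. The zero-$x$ part of $\partial_x a$ is $\sum_m\alpha_m y^m$, giving $P(\Tr(x\,\partial_x a))=\sum_m\alpha_m\Tr(xy^m)$; and computing $\partial_y\ad_y^m(x)$ together with the identity $\sum_{j=1}^m\binom{m}{j}(-1)^j=-1$ (for $m\ge1$) yields $P(\Tr(y\,\partial_y b))=-\sum_m\beta_m\Tr(xy^m)$. Thus $P(\dv(u))$ corresponds to $\alpha(t)-\beta(t)$, and matching with the Duflo side gives $-f'=\alpha-\beta$, i.e. $f'=\beta-\alpha$. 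The only delicate point is step (iii): one must genuinely invoke the trace relation (and the $x$-degree filtration) to annihilate the infinitely many higher-order contributions of $\frac{e^u-1}{u}$, after which the rest is a bookkeeping of coefficients.
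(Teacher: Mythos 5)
Your proof is correct and takes essentially the same route as the paper's: both arguments extract the linear-in-$x$ part of $j(F)=\frac{e^u-1}{u}\cdot \dv(u)$, use the cyclicity identity $\Tr(h(y)[x,y])=0$ to show that the higher terms $u^n\cdot\dv(u)$, $n\geq 1$, contribute nothing to that part, and then match $\dv(u)_{x-lin}=\Tr((\alpha(y)-\beta(y))x)$ against the Duflo-side value $j(F)_{x-lin}=-\Tr(f'(y)x)$. Your projection $P$ onto $W$ and the block-triangularity of $u$ for the $x$-degree filtration are just a more explicit rendering of the paper's one-line observation that linear-in-$x$ terms cannot arise under the action of $\dert_2$ on $\tr_2$.
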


\begin{proof}  \label{prop:extra1}
Consider the part of $j(F)=\Tr(f(x)-f(\ch(x,y))+f(y))$ linear in the 
generator $x$. On the one hand, we have
$$
j(F)_{x-lin}=\Tr(f(x)-f(\ch(x,y))+f(y))_{x-lin}= -\Tr( f'(y) x).
$$
On the other hand, we obtain
$$
j(F)_{x-lin}=\left( \frac{e^u -1}{u} \cdot \dv(u) \right)_{x-lin}=
\dv(u)_{x-lin} .
$$
Here we used the fact that linear in $x$ terms cannot arise under the
action of elements of $\dert_2$ on $\tr_2$. Indeed, such a term would
be of the form $\Tr (h(y) [x,y])$ for some formal power series $h$, and
$\Tr (h(y) [x,y])=\Tr(h(y)yx-h(y)xy)=0$. 

Finally, we compute
$$
\dv(u)_{x-lin}=\Tr(x(\partial_x a)+y(\partial_y b))_{x-lin}=
\Tr(x\alpha(y) - \beta(y) x)=\Tr((\alpha(y)-\beta(y))x).
$$
Comparison with the first equation yields $f'(y)=\beta(y)-\alpha(y)$,
as required.
\end{proof}

In the original formulation of the Kashiwara-Vergne problem 
the Duflo function $f$ was assumed to be even.

\vskip 0.2cm
\textbf{KV problem:} Find an element $F\in \Dert_2$ such that
$F(x+y)=\ch(x,y)$ and 
$j(F)=\frac{1}{2} \, \sum_{k=2}^\infty \frac{B_k x^k}{k\cdot k!}=
\frac{1}{2} \,\ln((e^{x/2}-e^{-x/2})/x)$.
\vskip 0.2cm

We shall denote the set of solutions of the KV problem by $\Sol(\KV)$.
Note that the KV problem is equivalent to finding an element $u=(A,B) \in \dert_2$
which satisfies equation \eqref{eq:oldkv1} and the identity 
$\dv(u)=\tilde{\delta}\left( \frac{1}{2} \, \Tr \sum_{k=2}^\infty \frac{B_k x^k}{k!}\right)$.

\begin{remark}
The group $\KV_2$ acts on $\Sol(\KV)$ by right multiplications.
This action is free and transitive. The proof of this statement
is completely analogous to the proof of Theorem \ref{th:solhkv}.
\end{remark}

\section{Pentagon equation}  \label{sec:pent}

In this Section we establish a relation between the Kashiwara-Vergne
problem and the pentagon equation introduced in \cite{dr}.
Let $\Phi \in \Dert_3$. We say that $\Phi$ satisfies 
the pentagon equation if
\begin{equation} \label{eq:pent}
\Phi^{12,3,4} \Phi^{1,2,34}=\Phi^{1,2,3}\Phi^{1,23,4}\Phi^{2,3,4} .
\end{equation}

\begin{proposition}  
Let $F \in \Sol(\HKV)$. Then,
\begin{equation}  \label{eq:phiffff}
\Phi=(F^{12,3})^{-1}(F^{1,2})^{-1}F^{2,3}F^{1,23}
\end{equation}
is an element of $\KV_3$, and it satisfies the pentagon equation.
\end{proposition}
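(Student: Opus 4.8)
The plan is to verify the two assertions in turn: first that $\Phi\in\KV_3$, then that $\Phi$ satisfies the pentagon equation \eqref{eq:pent}. Since $\KV_3$ is the subgroup of $\Sder_3$ cut out by $j(g)=0$, the first assertion amounts to checking both $\Phi(x+y+z)=x+y+z$ and $j(\Phi)=0$. At the outset $\Phi\in\Dert_3$, being a product of simplicial and coproduct images of $F$ and their inverses. For the special derivation property I would reuse the computation of the Introduction: since $F(x+y)=\ch(x,y)$ and each $F$-map is an automorphism intertwining the merged slot with $\ch$, one gets $F^{1,2}F^{12,3}(x+y+z)=\ch(\ch(x,y),z)$ and $F^{2,3}F^{1,23}(x+y+z)=\ch(x,\ch(y,z))$; associativity \eqref{eq:assocch} identifies these, and applying $(F^{1,2}F^{12,3})^{-1}$ yields $\Phi(x+y+z)=x+y+z$, so $\Phi\in\Sder_3$.

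For $j(\Phi)=0$ I would write $\Phi=G^{-1}H$ with $G=F^{1,2}F^{12,3}$ and $H=F^{2,3}F^{1,23}$. The group cocycle property \eqref{eq:j1} together with $j(g^{-1})=-g^{-1}\cdot j(g)$ gives $j(\Phi)=G^{-1}\cdot(j(H)-j(G))$, so it suffices to prove $j(G)=j(H)$. Starting from $j(F)=\Tr(f(x)-f(\ch(x,y))+f(y))$ and using that $j$ transforms under the simplicial and coproduct maps exactly as $\dv$ does (so $j(F^{1,2})$ is $j(F)$ and $j(F^{12,3})$ is $j(F)$ with $x$ replaced by $x+y$, $y$ by $z$), I expand $j(G)=j(F^{1,2})+F^{1,2}\cdot j(F^{12,3})$. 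Here $F^{1,2}$ acts as inner automorphisms on $x$ and $y$, hence fixes each of $\Tr(f(x))$, $\Tr(f(y))$, $\Tr(f(z))$, sends $\Tr(f(x+y))$ to $\Tr(f(\ch(x,y)))$, and, being an automorphism, sends $\ch(x+y,z)$ to $\ch(\ch(x,y),z)$. The $f(\ch(x,y))$ terms cancel, leaving $j(G)=\Tr(f(x)+f(y)+f(z)-f(\ch(\ch(x,y),z)))$; the symmetric computation gives $j(H)=\Tr(f(x)+f(y)+f(z)-f(\ch(x,\ch(y,z))))$. Associativity of $\ch$ forces $j(G)=j(H)$, whence $j(\Phi)=0$ and $\Phi\in\KV_3$.

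For the pentagon I would organize the argument around binary trees on the leaves $1,2,3,4$. To each of the five bracketings $t$ I associate $F_t\in\Dert_4$, an ordered product of coproduct images of $F$ characterized by $F_t(x_1+x_2+x_3+x_4)=w_t$, the correspondingly nested $\ch$; for instance $F_{((12)3)4}=F^{1,2}F^{12,3}F^{123,4}$ and $F_{1(2(34))}=F^{3,4}F^{2,34}F^{1,234}$. The core is the elementary edge relation $F_t\,\Phi_{\mathrm{edge}}=F_{t'}$ for each of the five associativity moves $t\to t'$, where $\Phi_{\mathrm{edge}}$ is the matching one of $\Phi^{1,2,3},\Phi^{1,23,4},\Phi^{2,3,4},\Phi^{12,3,4},\Phi^{1,2,34}$. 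Each such relation is a telescoping of the defining expression \eqref{eq:phiffff} for $\Phi$, promoted to $\Dert_4$ by the appropriate simplicial or coproduct homomorphism, possibly after moving the spectator block-merging map past $\Phi_{\mathrm{edge}}$. Composing the three edges of the long path $((12)3)4\to(1(23))4\to1((23)4)\to1(2(34))$ and the two edges of the short path $((12)3)4\to(12)(34)\to1(2(34))$ both produce $F_{((12)3)4}^{-1}F_{1(2(34))}$; cancelling $F_{((12)3)4}$ gives $\Phi^{1,2,3}\Phi^{1,23,4}\Phi^{2,3,4}=\Phi^{12,3,4}\Phi^{1,2,34}$, which is \eqref{eq:pent}.

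I expect two main obstacles. In the $j(\Phi)=0$ step the delicate point is the behaviour of the cocycle $j$ under the simplicial and coproduct maps; I would justify $j(g^{A,\dots})=j(g)^{A,\dots}$ from the reconstruction formula $j(\exp u)=((e^u-1)/u)\cdot\dv(u)$ and the transformation rules for $\dv$ already established in Section \ref{subsec:cocycle}. In the pentagon argument the technical content is the commutation lemmas required for the telescoping, namely that $\Phi^{1,2,3}$ commutes with $F^{123,4}$, that $\Phi^{2,3,4}$ commutes with $F^{1,234}$, and that $F^{1,2}$ commutes with $F^{3,4}$. These are the group-level exponentials of the identity $[u^{1,\dots,n},v^{12\cdots n,n+1,\dots}]=0$ for $u\in\sder_n$ recorded at the end of Section \ref{subsec:simplicial}, together with disjoint-support commutativity; promoting them from $\sder$ to the groups and keeping the tree bookkeeping straight is where the care lies.
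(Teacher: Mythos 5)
Your proposal is correct and follows essentially the same route as the paper: the same computation of $\Phi(x+y+z)$ via associativity of $\ch$, the same cocycle expansion showing $j(G)=j(H)=\Tr(f(x)+f(y)+f(z)-f(\ch(\ch(x,y),z)))$ (the paper applies $j$ to $F^{1,2}F^{12,3}\Phi=F^{2,3}F^{1,23}$ rather than writing $\Phi=G^{-1}H$, a trivial rearrangement), and the same pentagon verification by substitution and telescoping using the commutations $F^{123,4}\Phi^{1,2,3}=\Phi^{1,2,3}F^{123,4}$ and $F^{1,234}\Phi^{2,3,4}=\Phi^{2,3,4}F^{1,234}$. Your binary-tree bookkeeping and the explicit justification of $j$'s behaviour under simplicial and coproduct maps merely spell out details the paper leaves implicit in its one-sentence pentagon argument.
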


\begin{proof}
First, we compute
$$
\begin{array}{lll}
\Phi(x+y+z) & = & (F^{12,3})^{-1}(F^{1,2})^{-1}F^{2,3}F^{1,23} (x+y+z) \\
& = & (F^{12,3})^{-1}(F^{1,2})^{-1}F^{2,3}(\ch(x,y+z)) \\
& = & (F^{12,3})^{-1}(F^{1,2})^{-1} (\ch(x,\ch(y,z)) ) \\
& = & (F^{12,3})^{-1} (\ch(x+y,z)) \\
& = & x+y+z .
\end{array}
$$
Hence, $\Phi \in \Sder_3$. Next, we rewrite the defining equation 
for $\Phi$ as $F^{1,2}F^{12,3} \Phi=F^{2,3}F^{1,23}$ and apply 
the cocycle $j$ to both sides to get
$$
j(F^{1,2})+F^{1,2}\cdot j(F^{12,3}) + (F^{1,2}F^{12,3})\cdot j(\Phi)
=j(F^{2,3})+F^{2,3}\cdot j(F^{1,23}).
$$
Since $j(F)=\Tr(f(x)-f(\ch(x,y))+f(y))$, we have
$$
\begin{array}{lll}
j(F^{1,2})+F^{1,2}\cdot j(F^{12,3}) & = & \Tr(f(x)+f(y)-f(\ch(x,y))) \\
& +& F^{1,2} \cdot \Tr(f(x+y)-f(\ch(x+y),z)+f(z)) \\
& = & \Tr(f(x)+f(y)+f(z) - f(\ch(\ch(x,y),z)) )
\end{array}
$$
Similarly, we obtain
$$
\begin{array}{lll}
j(F^{2,3})+F^{2,3}\cdot j(F^{1,23}) & = &
\Tr(f(y)-f(\ch(y,z))+f(z) ) \\
& + & F^{2,3} \cdot \Tr(f(x)-f(\ch(x,y+z))+f(y+z) ) \\
& = & \Tr(f(x)+f(y)+f(z)-f(\ch(x,\ch(y,z))) ).
\end{array}
$$
We conclude $(F^{1,2}F^{12,3})\cdot j(\Phi)=0$, $j(\Phi)=0$
and $\Phi \in \KV_3$.

The pentagon equation is satisfied by substituting the expression
for $\Phi$ into the equation, and by using that for $\Phi \in \KV_3 \subset \Sder_3$
we have
$F^{123,4}\Phi^{1,2,3}=\Phi^{1,2,3}F^{123,4}$ and $F^{1,234}\Phi^{2,3,4}=
\Phi^{2,3,4}F^{1,234}$.
\end{proof}

Let $F_1 \in \Sol(\HKV)$ and $\Phi_1$ be the corresponding solution of 
the pentagon equation. Consider another element $F_2\in \Sol(\HKV)$.
By Theorem \ref{th:solhkv}, $F_2=F_1g$ for some $g \in \HKV_2$. The corresponding
solution of the pentagon equation reads
\begin{equation} \label{eq:twist}
\begin{array}{lll}
\Phi_2 & = & (F_2^{12,3})^{-1}(F_2^{1,2})^{-1}F_2^{2,3}F_2^{1,23}\\
& = & (g^{12,3})^{-1}(F_1^{12,3})^{-1}(g^{1,2})^{-1}(F_1^{1,2})^{-1}
F_1^{2,3}g^{2,3}F_1^{1.23}g^{1,23} \\
& = & (g^{12,3})^{-1}(g^{1,2})^{-1}\Phi_1 g^{2,3}g^{1,23} .
\end{array}
\end{equation}
Equation \eqref{eq:twist} defines an action of $\HKV_2$ 
on solutions of the pentagon equation with values in $\KV_3$. 
Actions of this type are called {\em Drinfeld twists}.

\begin{proposition} \label{prop:F1F2}
Let $F_1, F_2 \in \Sol(\HKV)$ and assume that they give rise to the 
same solution $\Phi$ of the pentagon equation. Then, $F_2=F_1 \exp(\lambda t)$
for some $\lambda \in \k$. 
\end{proposition}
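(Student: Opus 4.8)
The plan is to translate the hypothesis into a statement about the stabilizer of $\Phi$ in $\HKV_2$ and then to pin down that stabilizer using the cohomology computation of Theorem~\ref{th:hd}. Since $F_1,F_2\in\Sol(\HKV)$, Theorem~\ref{th:solhkv} provides a unique $g\in\HKV_2$ with $F_2=F_1g$. Substituting this into the Drinfeld twist formula~\eqref{eq:twist} and using that $F_1$ and $F_2$ yield the same $\Phi$ gives, after left multiplication by $g^{1,2}g^{12,3}$, the stabilizer equation
\[
g^{1,2}g^{12,3}\,\Phi=\Phi\,g^{2,3}g^{1,23}.
\]
Thus it suffices to show that the $g\in\HKV_2$ solving this equation are precisely those of the form $\exp(\lambda t)$, $\lambda\in\k$.

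First I would record that $\exp(\lambda t)$ solves the stabilizer equation. With $g=\exp(\lambda t)$ the coproduct and simplicial images satisfy $t^{1,2}+t^{12,3}=c$ and $t^{2,3}+t^{1,23}=c$ for $c=\sum_{i<j}t^{i,j}$, while the relations~\eqref{eq:t2} give $[t^{1,2},t^{12,3}]=0=[t^{2,3},t^{1,23}]$. Hence $g^{1,2}g^{12,3}=\exp(\lambda c)=g^{2,3}g^{1,23}$, and since $c$ is central in $\sder_3$ and $\Phi\in\KV_3\subset\Sder_3$, the element $\exp(\lambda c)$ commutes with $\Phi$, so the equation holds. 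In particular these solutions form the one-parameter subgroup $\exp(\k t)$, which I will use as the model answer.

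For the converse, the heart of the argument, suppose $g$ solves the stabilizer equation and set $w=\ln g\in\hkv_2\subset\sder_2$. Its degree-one component lies in the degree-one part of $\sder_2$, which is $\k t$, so $w_1=\lambda t$ for some $\lambda$. Because $t$ is central in $\sder_2$, the element $h=g\exp(-\lambda t)=\exp(w-\lambda t)$ still solves the stabilizer equation (the solutions form a group containing $\exp(\k t)$), and $\ln h$ has no term in degree one. Rewriting the equation as $\Phi^{-1}\bigl(h^{1,2}h^{12,3}\bigr)\Phi=h^{2,3}h^{1,23}$, I would extract its lowest-degree component: if $\ln h\neq0$ with leading term $(\ln h)_m$ in some degree $m\ge2$, then, since the simplicial and coproduct maps preserve the grading and conjugation by $\Phi$ alters a homogeneous degree-$m$ element only in degrees strictly greater than $m$, the degree-$m$ part of the equation reduces to
\[
(\ln h)_m^{1,2}+(\ln h)_m^{12,3}=(\ln h)_m^{2,3}+(\ln h)_m^{1,23},
\]
which is exactly $\d(\ln h)_m=0$. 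By Theorem~\ref{th:hd} one has $\ker(\d:\dert_2\to\dert_3)=\k r\oplus\k t$, concentrated in degree one, so $(\ln h)_m=0$ for $m\ge2$, a contradiction. Therefore $\ln h=0$, whence $g=\exp(\lambda t)$ and $F_2=F_1\exp(\lambda t)$.

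The step I expect to require the most care is this last leading-order extraction: one must verify that conjugation by the fixed element $\Phi$ contributes only strictly higher-degree corrections and that the coproduct maps are degree-preserving, so that the degree-$m$ truncation of the stabilizer equation is genuinely the linear cocycle relation $\d(\ln h)_m=0$ and not a relation carrying nonlinear remainders. Once this is confirmed, Theorem~\ref{th:hd} closes the argument at once, as it forbids nonzero $\d$-cocycles in $\dert_2$ of degree $\ge2$.
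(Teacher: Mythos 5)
Your proof is correct and takes essentially the same route as the paper's: factor $g=F_1^{-1}F_2$ as $\exp(\lambda t)$ times a higher-order element (using that $t$ is central and that $g^{1,2}g^{12,3}=e^{\lambda c}=g^{2,3}g^{1,23}$ with $c$ central in $\sder_3$), then extract the lowest-degree component of the stabilizer equation to get $\d (\ln h)_m=0$ and invoke Theorem~\ref{th:hd} to force $\ln h=0$. Your explicit verification that conjugation by $\Phi$ and the Campbell--Hausdorff corrections contribute only in degrees strictly above $m$ is precisely the step the paper's proof leaves implicit in asserting $\d u_{k_0}=0$.
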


\begin{proof}
First, note that for $g=\exp(\lambda t)$ we have for all $\Phi \in \KV_3$
$$
(g^{12,3})^{-1}(g^{1,2})^{-1}\Phi g^{2,3}g^{1,23} = 
e^{-\lambda c}\Phi e^{\lambda c}= \Phi,
$$
where $c=t^{1,2}+t^{1,3}+t^{2,3}$ is a central element in $\sder_3$ and in
$\kv_3$. 

The degree one component of $\hkv_2$ is spanned by $t$, and $t$ is central in 
$\hkv_2$. Hence, one can represent $g=F_1^{-1}F_2$ in the form
$g=\exp(\lambda t) \exp(u)$, where $u=\sum_{k=2}^\infty u_k \in \hkv_2$. 
Let $\Phi$ be a solution of the pentagon equation which corresponds
to both $F_1$ and $F_2$. Let $k_0$ be the lowest degree  such that 
$u_{k_0} \neq 0$. Then, equation
$\Phi=(g_2^{12,3})^{-1}(g_2^{1,2})^{-1}\Phi g^{2,3}g^{1,23}$
implies $\d u_{k_0}=0$, and by Theorem \ref{th:hd} we have $u_{k_0}=0$
which implies $u=0$ and $g=\exp(\lambda t)$, as required.
\end{proof}

\begin{proposition}
Let $\Phi=\exp(\phi) \in \Dert_2$ be a solution of the pentagon equation,
where $\phi=\sum_{k=1}^\infty \phi_k$ with $\phi_k \in \dert_3$ homogeneous
of degree $k$. Then, $\phi_1=0$ and $\phi_2=(\alpha [y,z], \beta [z,x], \gamma [x,y])$.
\end{proposition}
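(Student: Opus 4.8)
The plan is to expand the pentagon equation \eqref{eq:pent} degree by degree. Since the assignments $I,J,K \mapsto \Phi^{I,J,K} = \exp(\phi^{I,J,K})$ are group homomorphisms, both sides of \eqref{eq:pent} are Campbell--Hausdorff products of exponentials, and their low degree parts are controlled by the differential $\d : \dert_3 \to \dert_4$ together with the cohomology computation of Theorem \ref{th:hd}.

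First I would extract the degree one component. All the $\phi^{I,J,K}$ have degree at least one, so in the lowest degree the group law is abelian and \eqref{eq:pent} becomes
$$\phi_1^{12,3,4} + \phi_1^{1,2,34} = \phi_1^{1,2,3} + \phi_1^{1,23,4} + \phi_1^{2,3,4},$$
which is exactly the cocycle condition $\d \phi_1 = 0$. To conclude $\phi_1 = 0$ I would invoke Theorem \ref{th:hd}: the group $H^3(\dert, \d)$ is one dimensional and concentrated in degree two, its generator $(0,[z,x],0)$ being a degree two cocycle. At the same time, the degree one part of $\dert_2$ is spanned by $r = (y,0)$ and $t = (y,x)$, which are both $\d$-closed, so $\d$ vanishes on $\dert_2$ in degree one. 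Hence the degree one part of $\ker(\d : \dert_3 \to \dert_4)$ coincides with $H^3(\dert, \d)$ in degree one, which is zero; therefore $\d \phi_1 = 0$ forces $\phi_1 = 0$.

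With $\phi_1 = 0$, all the Campbell--Hausdorff bracket corrections vanish in degree two, so the degree two part of \eqref{eq:pent} reduces to
$$\phi_2^{12,3,4} + \phi_2^{1,2,34} = \phi_2^{1,2,3} + \phi_2^{1,23,4} + \phi_2^{2,3,4},$$
that is $\d \phi_2 = 0$. It then remains to describe $\ker(\d : \dert_3 \to \dert_4)$ in degree two. By Theorem \ref{th:hd} this kernel is the sum of the two dimensional image of $\d$, namely $\d(\alpha[x,y], \beta[x,y]) = (-\alpha[y,z], (\alpha-\beta)[z,x], \beta[x,y])$, and the one dimensional space spanned by $(0,[z,x],0)$. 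A short linear algebra check shows that this three dimensional space is precisely the set of triples $(\alpha[y,z], \beta[z,x], \gamma[x,y])$ with $\alpha, \beta, \gamma \in \k$, which is the asserted form of $\phi_2$.

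The main obstacle is the vanishing of $\phi_1$. Rather than solving by hand the several scalar equations encoded in $\d \phi_1 = 0$, the efficient route is to read off from Theorem \ref{th:hd} that the relevant kernel has no degree one part, using the auxiliary observation that there are no nonzero degree one coboundaries. Once $\phi_1 = 0$ eliminates the bracket terms, the determination of $\phi_2$ is an immediate application of the same cohomology computation.
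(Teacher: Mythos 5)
Your proposal is correct and takes essentially the same route as the paper's proof: both extract the degree one and degree two components of the pentagon equation as the cocycle conditions $\d \phi_1 = 0$ and $\d \phi_2 = 0$, then use Theorem \ref{th:hd} --- that $H^3(\dert,\d)$ is one dimensional and concentrated in degree two, while every degree one element of $\dert_2$ is $\d$-closed so there are no degree one coboundaries --- to force $\phi_1=0$ and to identify the degree two kernel. The only (harmless) difference is that you make explicit the linear algebra showing that the image of $\d$ in degree two together with the representative $(0,[z,x],0)$ spans exactly the triples $(\alpha[y,z],\beta[z,x],\gamma[x,y])$, a verification the paper delegates to the proof of Theorem \ref{th:hd}.
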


\begin{proof}
The degree one component of the pentagon equation reads $\d \phi_1=0$.
Since the degree one component of $H^3(\dert, \d)$ vanishes, we have
$\phi_1=\d f$ for a degree one element $f \in \dert_2$. However, the degree
one component of $\dert_2$ is spanned by $r=(0,x)$ and $t=(y,x)$, and both
$r$ and $t$ are in the kernel of $\d$. Hence, $\phi_1=0$. This implies
that the degree two component of the pentagon equation is of the form,
$\d \phi_2=0$. Then (see the proof of Theorem \ref{th:hd}), $\phi_2$ is expressed as
$(\alpha [y,z], \beta [z,x], \gamma [x,y])$ for some $\alpha, \beta, \gamma \in \k$.
\end{proof}

Note that $H^3(\dert, \d)$ is one-dimensional, and the cohomology lies in degree
two. One can choose the isomorphism $H^3(\dert, \d) \cong \k$ in such a way that
it is represented by the map $\pi: \phi_2=(\alpha [y,z], \beta [z,x], \gamma [x,y])
\mapsto \alpha + \beta + \gamma$.

\begin{proposition} \label{prop:kv=>pent}
Let $F=\exp(u)\exp(sr/2)\exp(\alpha t) \in \Dert_2$, where $u$ is
an element of $\dert_2$ of degree greater of equal to two. Assume that 
the expression 
$\Phi=(F^{12,3})^{-1}(F^{1,2})^{-1}F^{2,3}F^{1,23}$
is an element of $\KV_3$, and denote $\pi(\phi_2)=\lambda$. Then,
$\lambda=s^2/8$ and $F \in \Sol_{s}(\HKV)$.
\end{proposition}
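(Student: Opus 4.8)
The plan is to read off two defining features of $\KV_3$ from the hypothesis $\Phi\in\KV_3$ and feed them separately into the two assertions: the condition $\Phi\in\Sder_3$ (that is, $\Phi(x+y+z)=x+y+z$) will give $F(x+y)=\ch_s(x,y)$, while the condition $j(\Phi)=0$ will give the divergence condition $j(F)\in{\rm im}(\tidelta_s)$; together these say $F\in\Sol_s(\HKV)$. The value $\lambda=s^2/8$ will then come from an independent second-order computation of $\phi_2=(\ln\Phi)_2$.

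First I would treat the associativity half. Set $\chi(x,y):=F(x+y)$. Rewriting $\Phi(x+y+z)=x+y+z$ as $F^{1,2}F^{12,3}(x+y+z)=F^{2,3}F^{1,23}(x+y+z)$ and using that the coproduct maps act by $F^{12,3}(x+y+z)=\chi(x+y,z)$, $F^{1,2}(\chi(x+y,z))=\chi(\chi(x,y),z)$, and symmetrically on the right, I obtain the associativity identity $\chi(\chi(x,y),z)=\chi(x,\chi(y,z))$. Since $u$ has degree $\ge 2$ and $t\in\sder_2$ satisfies $t(x+y)=0$, neither $\exp(u)$ nor $\exp(\alpha t)$ affects $F(x+y)$ below degree three, whereas $\exp(\tfrac{s}{2}r)(x+y)=(x+y)+\tfrac{s}{2}[x,y]+\dots$; hence $\chi(x,y)=x+y+\tfrac{s}{2}[x,y]+\dots$. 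Proposition~\ref{prop:chunique} then forces $\chi=\ch_s$, i.e. $F(x+y)=\ch_s(x,y)$.

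Next I would establish the divergence condition. Writing the defining relation as $F^{1,2}F^{12,3}\Phi=F^{2,3}F^{1,23}$ and applying the group $1$-cocycle $j$, the term $(F^{1,2}F^{12,3})\cdot j(\Phi)$ drops out because $j(\Phi)=0$, leaving
$$j(F^{1,2})+F^{1,2}\cdot j(F^{12,3})=j(F^{2,3})+F^{2,3}\cdot j(F^{1,23}).$$
The key input is that $j$ transforms under simplicial and coproduct maps exactly as its infinitesimal counterpart $\dv$ does, namely $j(F^{1,2})=j(F)(x,y)$, $j(F^{2,3})=j(F)(y,z)$, $j(F^{12,3})=j(F)(x+y,z)$ and $j(F^{1,23})=j(F)(x,y+z)$; this I would derive from the closed formula $j(\exp w)=\tfrac{e^{w}-1}{w}\cdot\dv(w)$ together with the transformation rules for $\dv$ proved in Section~\ref{subsec:cocycle}. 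Combined with $F(x+y)=\ch_s(x,y)$, the actions of $F^{1,2}$ and $F^{2,3}$ reduce to the substitutions $x+y\mapsto\ch_s(x,y)$ and $y+z\mapsto\ch_s(y,z)$, so the displayed identity reads
$$j(F)(x,y)-j(F)(y,z)+j(F)(\ch_s(x,y),z)-j(F)(x,\ch_s(y,z))=0,$$
which is precisely $-\tidelta_s(j(F))=0$. Since $H^2(\tr,\tidelta_s)=0$ (the Remark after Proposition~\ref{prop:htidelta}), one has ${\rm ker}(\tidelta_s:\tr_2\to\tr_3)={\rm im}(\tidelta_s:\tr_1\to\tr_2)$, whence $j(F)\in{\rm im}(\tidelta_s)$ and $F\in\Sol_s(\HKV)$.

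Finally, for $\lambda=s^2/8$ I would compute $\phi_2$ directly. By the twist invariance recorded just before Proposition~\ref{prop:F1F2}, removing the right factor $\exp(\alpha t)$ leaves $\Phi$ unchanged, so I may set $\alpha=0$; then the degree-one part of $\ln F$ is $\tfrac{s}{2}r$, and the degree-$\ge 2$ part $w$ of $\ln F$ contributes to $\phi_2$ only through the coboundary $\d w_2$. Expanding $\phi=\ln\big((F^{12,3})^{-1}(F^{1,2})^{-1}F^{2,3}F^{1,23}\big)$ by Baker--Campbell--Hausdorff, the degree-two part is $\phi_2=\d w_2+\tfrac12\sum_{i<j}[X_i,X_j]$, where $\d w_2$ is a coboundary (hence killed by $\pi$) and the $X_i$ are the degree-one images $-\tfrac{s}{2}(r^{1,3}+r^{2,3})$, $-\tfrac{s}{2}r^{1,2}$, $\tfrac{s}{2}r^{2,3}$, $\tfrac{s}{2}(r^{1,2}+r^{1,3})$. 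Using the explicit brackets of Proposition~\ref{prop:cybe} (so that $[r^{1,2},r^{1,3}]=([y,z],0,0)=-[r^{1,2},r^{2,3}]$ and $[r^{1,3},r^{2,3}]=0$), the double sum collapses to $\tfrac12\sum_{i<j}[X_i,X_j]=\tfrac{s^2}{8}([y,z],0,0)$, and applying $\pi$ gives $\lambda=\pi(\phi_2)=s^2/8$. I expect the main obstacle to be this last computation: tracking the Baker--Campbell--Hausdorff cross-terms and checking that, after the cancellations forced by the classical Yang--Baxter equation, exactly the coefficient $s^2/8$ survives; a secondary care point is justifying the coproduct transformation law for the group cocycle $j$ at the integrated (group) level rather than only for $\dv$.
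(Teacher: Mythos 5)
Your proposal is correct and follows essentially the same three-step route as the paper's own proof: the associativity of $\chi(x,y)=F(x+y)$ combined with Proposition~\ref{prop:chunique} gives $F(x+y)=\ch_s(x,y)$, applying the cocycle $j$ with $j(\Phi)=0$ gives $\tidelta_s(j(F))=0$ and hence $j(F)\in{\rm im}(\tidelta_s)$ by Proposition~\ref{prop:htidelta}, and the degree-two Baker--Campbell--Hausdorff term collapses via the classical Yang--Baxter equation to $\d u_2+\tfrac{s^2}{8}([y,z],0,0)$, yielding $\pi(\phi_2)=s^2/8$. Your only deviations are cosmetic improvements: you first remove the factor $\exp(\alpha t)$ by the conjugation-by-$e^{\alpha c}$ argument (the paper instead leaves it in, where the $\alpha$-dependent cross terms happen to cancel against the coboundary), and you make explicit the transformation law of $j$ under simplicial and coproduct maps, which the paper uses tacitly.
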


\begin{proof}
Note that the degree two component of $\phi=\ln(\Phi)$ is given by
$$
\phi_2=\d u_2 + \frac{s^2}{8}([r^{2,3},r^{1,23}]+[r^{12,3},r^{1,2}])=
\d u_2 + \frac{s^2}{8} [r^{2,3},r^{1,2}]=\d u_2 + \frac{s^2}{8}([y,z],0,0).
$$
Here we used the classical Yang-Baxter equation of Proposition
\ref{prop:cybe}. In conclusion, $\lambda=\pi(\phi_2)=s^2/8$.

Denote $\chi(x,y)=F(x+y)=x+y+\frac{s}{2}[x,y]+\dots$, where $\dots$
stand for elements of degree greater or equal to three. Since 
$\Phi(x+y+z)=x+y+z$, we have
$$
\chi(x,\chi(y,z))=F^{2,3}F^{1,23}(x+y+z)=F^{1,2}F^{12,3}(x+y+z)=\chi(\chi(x,y),z).
$$
By Proposition \ref{prop:chunique}, this implies $\chi(x,y)=\ch_{s}(x,y)$. 
Denote $b(x,y)=j(F) \in \tr_2$. By applying $j$ to the equality
$F^{2,3}F^{1,23}=F^{1,2}F^{12,3}\Phi$ we obtain,
$$
b(y,z)+F^{2,3} \cdot b(x,y+z)=b(x,y)+F^{1,2} \cdot b(x+y,z).
$$
Equivalently, $\tilde{\delta}_{s}(b)=0$ which implies, 
by Proposition \ref{prop:htidelta}, $b \in {\rm im}(\tilde{\delta}_{s})$
and $F \in \Sol_{s}(\HKV)$.
\end{proof}

\begin{theorem}  \label{th:pent=>kv}
Let $\Phi \in \KV_3$ be a solution of the pentagon equation
with $\pi(\phi_2)=\lambda$ and let $s \in \k$ be a square root of $8\lambda$,
$s^2/8=\lambda$. Then, there is a unique element $F \in \Sol_s(\HKV)$
such that $F=\exp(u)\exp(sr/2) \in \Dert_2$, where $u$ is 
an element of $\dert_2$ of degree greater of equal to two, and 
$\Phi=(F^{12,3})^{-1}(F^{1,2})^{-1}F^{2,3}F^{1,23}$.
\end{theorem}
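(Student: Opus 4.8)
The plan is to invert the assignment $F \mapsto \Phi$ of \eqref{eq:phiffff} recursively in the degree. Writing $F = \exp(u)\exp(sr/2)$ with $u = \sum_{k\ge 2} u_k$ and $u_k \in \dert_2$ homogeneous of degree $k$, I determine the components $u_k$ one at a time so that the combination $(F^{12,3})^{-1}(F^{1,2})^{-1}F^{2,3}F^{1,23}$ reproduces $\Phi$ degree by degree. The linear-algebraic backbone is that the linearization of this combination at the identity is exactly the differential $\d\colon \dert_2 \to \dert_3$ of Section~\ref{subsec:dcoh}: adding a homogeneous degree-$n$ term $u_n$ to $\ln F$ changes $\ln\Phi$ by $\d u_n$ plus terms of degree $>n$. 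Hence, once the lower-degree components have been fixed, the degree-$n$ step reduces to a linear equation $\d u_n = c_n$ in $\dert_2$, whose right-hand side $c_n \in \dert_3$ is the degree-$n$ part of $\ln\Phi$ minus the degree-$n$ part of the combination assembled from the data already constructed.

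First I would fix the degree-one term. The combination \eqref{eq:phiffff} is insensitive to it, since the whole degree-one part of $\dert_2$ lies in ${\rm ker}(\d)$ by Theorem~\ref{th:hd}; this matches the vanishing $\phi_1 = 0$. I choose the coefficient of $r$ to be $s/2$ and that of $t$ to be $0$, which both installs the prescribed normalization $\exp(sr/2)$ and removes the $\exp(\alpha t)$ freedom of Proposition~\ref{prop:F1F2}. For the inductive step I solve $\d u_n = c_n$. When $n \ge 3$ this is solvable as soon as $c_n$ is a cocycle, because $H^3(\dert,\d)$ is concentrated in degree two (Theorem~\ref{th:hd}), so that ${\rm ker}(\d) = {\rm im}(\d)$ in every degree $n \ge 3$; the solution $u_n$ is moreover unique, because ${\rm ker}(\d\colon\dert_2\to\dert_3)$ is entirely of degree one and hence $\d$ is injective on homogeneous elements of degree $\ge 2$. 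The degree-two step is the one place where the cohomology is nonzero: there $c_2 = \phi_2 - \tfrac{s^2}{8}([y,z],0,0)$ by the computation in Proposition~\ref{prop:kv=>pent}, and $c_2 \in {\rm im}(\d)$ precisely because $\pi(c_2) = \lambda - s^2/8 = 0$ by the choice $s^2/8 = \lambda$.

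The hard part will be the cocycle identity $\d c_n = 0$, and this is exactly where the pentagon equation for $\Phi$ enters. Let $F^{(n-1)}$ be the partial solution and let $G$ be the combination \eqref{eq:phiffff} formed from $F^{(n-1)}$, so that $\ln G - \ln\Phi$ is a derivation of degree $\ge n$ whose degree-$n$ part is $-c_n$. Since $\Phi \in \Sder_3$ and a derivation of degree $\ge n$ moves the degree-one element $x+y+z$ only in degrees $\ge n+1$, we get $G(x+y+z) = x+y+z$ modulo degree $n+1$; that is, $G$ is special up to degree $n+1$. Plugging this into the telescoping computation from the proof of the proposition containing \eqref{eq:phiffff} — which uses only that the combination is special, through the commutations $F^{123,4}G^{1,2,3}=G^{1,2,3}F^{123,4}$ and $F^{1,234}G^{2,3,4}=G^{2,3,4}F^{1,234}$ — shows that $G$ satisfies the pentagon equation \eqref{eq:pent} modulo degree $n+1$. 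On the other hand, writing the pentagon map as $P(\Phi)=(\Phi^{12,3,4}\Phi^{1,2,34})^{-1}\Phi^{1,2,3}\Phi^{1,23,4}\Phi^{2,3,4}$ and linearizing around $\Phi$ (where $P(\Phi)=e$ exactly), I get $(\ln P(G))_n = -\d c_n$, the nonlinear corrections entering only in degrees $>n$ because $\ln\Phi$ starts in degree two. Comparing the two computations gives $\d c_n = 0$, which is what the inductive step needs.

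Running this recursion yields a unique $u = \sum_{k \ge 2} u_k$ and hence a unique $F = \exp(u)\exp(sr/2) \in \Dert_2$ realizing $\Phi$ through \eqref{eq:phiffff}; uniqueness is immediate from the injectivity of $\d$ in degrees $\ge 2$ recorded above. It then remains to identify $F$ as an element of $\Sol_s(\HKV)$, and for this I would simply invoke Proposition~\ref{prop:kv=>pent} for the constructed $F$ (the case $\alpha = 0$): since $\Phi$ lies in $\KV_3$ by hypothesis, that proposition yields both $F(x+y)=\ch_s(x,y)$ — here Proposition~\ref{prop:chunique} identifying the associative series $F(x+y)$ with $\ch_s$ — and $j(F)\in{\rm im}(\tidelta_s)$, i.e.\ $F \in \Sol_s(\HKV)$. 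I expect the verification of $\d c_n = 0$ to be the only delicate point; everything else is the cohomological bookkeeping of Theorem~\ref{th:hd} together with a direct appeal to Propositions~\ref{prop:chunique} and~\ref{prop:kv=>pent}.
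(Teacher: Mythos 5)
Your proposal is correct and takes essentially the same route as the paper's own proof: the same degree-by-degree recursion whose linearization is the differential $\d$, with the pentagon equation for $\Phi$ (compared against the truncated pentagon satisfied by the partial solution, which is special modulo one extra degree) supplying the cocycle condition, Theorem \ref{th:hd} supplying both solvability in degrees $\geq 3$ and uniqueness via injectivity of $\d$ on $\dert_2$ in degrees $\geq 2$, the normalization $\pi(\phi_2)=\lambda=s^2/8$ killing the degree-two obstruction, and a final appeal to Proposition \ref{prop:kv=>pent} to conclude $F\in \Sol_s(\HKV)$. The only deviations are cosmetic: an index shift in the induction and your explicit linearization of the pentagon map around $\Phi$, which the paper phrases equivalently as comparing the exact pentagon equation with the one satisfied modulo higher-degree terms by $\Phi_n$.
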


\begin{proof}
Our task is to find $f = \sum_{k=1}^\infty  f_k \in \dert_2$ with the degree
one component $f_1=sr/2$ such that $F=\exp(f)$ solves equation
$\Phi=(F^{12,3})^{-1}(F^{1,2})^{-1}F^{2,3}F^{1,23}$. In degree 
two, it implies,
$$
\d f_2 + \frac{s^2}{8}([y,z],0,0) = \phi_2.
$$
Recall that $\d \phi_2=0$ and $\pi(\phi_2)=\lambda=s^2/8$. Hence,
this equation admits a solution, and it is unique since $\d : \dert_2 \to \dert_3$
has no kernel in degrees greater than one.

Assume that we found $F_n \in \Dert_2$ such that 
$\Phi_n=(F_n^{12,3})^{-1}(F_n^{1,2})^{-1}F_n^{2,3}F_n^{1,23}$
is equal to $\Phi$ modulo terms of degree greater than $n$.
Then,  
$F_n^{2,3}F_n^{1,23}(x+y+z)=F_n^{1,2}F_n^{12,3}(x+y+z)$ modulo terms of degree
greater than $n+1$, and $F_n(x,y)=\ch_s(x,y)$ modulo terms of degree greater than
$n+1$. Since $F_n^{123,4}\Phi_n^{1,2,3}=\Phi_n^{1,2,3}F_n^{123,4}$ and 
$F_n^{1,234}\Phi_n^{2,3,4}= \Phi_n^{2,3,4}F_n^{1,234}$ modulo terms of 
degree greater than $n+1$, $\Phi_n$ satisfies the pentagon equation
modulo terms of degree greater than $n+1$.
Write $\Phi_n=\exp(\sum_{k=2}^\infty \psi_k)$, where $\psi_k=\phi_k$ for $k\leq n$ and
denote $\varphi=\phi_{n+1}-\psi_{n+1}$. The pentagon equation for $\Phi$
and the pentagon equation modulo terms of degree greater then $n+1$ for $\Phi_n$
imply $\d \varphi=0$. Hence, by Theorem \ref{th:hd}, $\varphi=\d u$ for 
a unique element $u \in \dert_2$ of degree $n+1$. Put $F_{n+1}=F_n \exp(u)$. It satisfies equation
$\Phi=(F_{n+1}^{12,3})^{-1}(F_{n+1}^{1,2})^{-1}F_{n+1}^{2,3}F_{n+1}^{1,23}$
modulo terms of degree greater than $n+1$. By induction, we construct a unique
$F$ which solves equation $\Phi=(F^{12,3})^{-1}(F^{1,2})^{-1}F^{2,3}F^{1,23}$
and has $f_1=sr/2$, as required.
By Proposition \ref{prop:kv=>pent}, the element $F$ solves the KV problem,
$F \in \Sol_s(\HKV)$. 
\end{proof}

Theorem \ref{th:pent=>kv} implies that the Kashiwara-Vergne problem has solutions 
if an only if the pentagon equation has solutions $\Phi \in \KV_3$ with 
$\pi(\phi_2)=1/8$. The next proposition provides a tool extracting the Duflo
function of an element $F \in \Sol(\HKV)$ from the corresponding solution
of the pentagon equation.

\begin{proposition}  \label{prop:extra2}
Let $\Phi=\exp(\phi) \in \KV_3$ be a solution of the pentagon equation with
$\pi(\phi_2)=1/8$, and let $F \in \Sol(\HKV)$ be a solution of 
equation \eqref{eq:phiffff}. Denote $\phi=(A,B,C)$, and
$B(x,0,z)_{x-lin}=h(\ad_z) x$ for $h\in x \k[[x]]$. Then, the Duflo
function of $F$ satisfies equation $f'(x)=h(x)$. 
\end{proposition}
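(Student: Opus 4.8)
The plan is to reduce the claim to the preceding Proposition, which computes the Duflo function of $F=\exp(u)$, $u=(a,b)\in\dert_2$, as $f'=\beta-\alpha$, where $a(x,y)=a_0y+\alpha(\ad_y)x+\dots$ and $b(x,y)=b_0x+\beta(\ad_y)x+\dots$ (dots denoting terms with at least two $x$). Thus it suffices to prove $h=\beta-\alpha$. First I would record how $h$ sits inside $\phi=\ln\Phi=(A,B,C)$: by definition $\phi(y)=[y,B(x,y,z)]$, and the only contribution to $\phi(y)$ that is linear in both $x$ and $y$ comes from the $y$-free, $x$-linear part of $B$, namely $[y,B(x,0,z)_{x-lin}]=[y,h(\ad_z)x]$. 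Since $[y,\cdot]$ is injective on the span of the $\ad_z^kx$, the function $h$ is determined by the bidegree-$(1,1)$ (one $x$, one $y$, arbitrary $z$) component of $\phi(y)$.

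The core computation is to evaluate this component from $\Phi=(F^{12,3})^{-1}(F^{1,2})^{-1}F^{2,3}F^{1,23}$ of \eqref{eq:phiffff}. To lowest order $\phi=\ln\Phi$ equals the linear combination $\d u=u^{2,3}-u^{12,3}+u^{1,23}-u^{1,2}$. Reading off the second ($B$-)slot of each summand gives $a(y,z)-a(x+y,z)+b(x,y+z)-b(x,y)$; setting $y=0$ and keeping the $x$-linear part, the $x$-free term $a(0,z)$ drops, the two $b_0x$ contributions cancel, and one is left with $-\alpha(\ad_z)x+\beta(\ad_z)x=(\beta-\alpha)(\ad_z)x$. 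Hence at linear order $B(x,0,z)_{x-lin}=(\beta-\alpha)(\ad_z)x$, which is exactly the desired identity $h=\beta-\alpha=f'$.

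The main obstacle is that $\phi=\ln\Phi$ is not simply $\d u$: the Campbell--Hausdorff corrections contribute brackets $[u^\bullet,u^{\bullet'}]$, and these do reach the relevant component (for instance the $B$-slot of $[u^{1,2},u^{2,3}]$ at $y=0$, linear in $x$, equals $a_0b_0[x,z]$). So the real content is to show that the \emph{total} higher-order contribution to $B(x,0,z)_{x-lin}$ vanishes. I would organise this by linearising the defining relation $F^{1,2}F^{12,3}\Phi=F^{2,3}F^{1,23}$ in $y$: modulo the ideal generated by $y$ one has $F^{1,2}\equiv F^{2,3}\equiv\mathrm{id}$ and $F^{12,3}\equiv F^{1,23}\equiv F^{1,3}$, so $\Phi\equiv\mathrm{id}$ and the entire component lives in the first-order-in-$y$ part. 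Expanding to that order turns the product into a sum of the first-order-in-$y$ pieces of the four factors, conjugated by $F^{1,3}$, and the delicate step is to verify that these conjugations together with the $x$-free constants $a_0,b_0$ cancel, leaving precisely $(\beta-\alpha)(\ad_z)x$; the constraint $\Phi\in\KV_3$, i.e. $\phi(x+y+z)=0$, is the natural tool to force these cancellations. Once $h=\beta-\alpha$ is established, the preceding Proposition yields $f'(x)=h(x)$.
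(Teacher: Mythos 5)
Your reduction of the claim to the identity $h=\beta-\alpha$ via the preceding Proposition, and your computation of the contribution of $\d u$ to $B(x,0,z)_{x\text{-lin}}$ (including the cancellation of the two $b_0x$ terms), are correct and coincide with the first half of the paper's proof. But there is a genuine gap at exactly the point where the proposition has content: you never prove that the total higher-order Campbell--Hausdorff contribution to $B(x,0,z)_{x\text{-lin}}$ vanishes. You rightly observe that individual brackets do reach this component (your example $a_0b_0[x,z]$ inside $[u^{1,2},u^{2,3}]$ is correct), so a nontrivial cancellation must be established, and your text explicitly defers it (``the delicate step is to verify\dots''). The framework you propose is also weaker than it looks: reducing modulo the ideal generated by $y$ indeed gives $\Phi\equiv {\rm id}$, but that only forces $A(x,0,z)=0$ and $C(x,0,z)=0$ and puts no constraint whatsoever on the second slot $B(x,0,z)$; and the first-order-in-$y$ expansion is more delicate than your sketch suggests, because the $y$-degree-zero operator parts of the four factors (for instance $y\mapsto b_0[y,x]$ inside $F^{1,2}$, coming from $b(x,0)=b_0x$) act nontrivially on the $y$-slot, so the four first-order pieces are conjugated by different, factor-dependent automorphisms rather than by a common $F^{1,3}$, and it is not at all clear that $\phi(x+y+z)=0$ alone forces the needed cancellations.

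The paper closes this gap with a short multidegree argument that your proposal is missing. Setting $u^l=u^{1,2}+u^{12,3}$ and $u^r=u^{2,3}+u^{1,23}$, one writes $\phi=\ch(-u^l,u^r)$ and tracks the component of multidegree one in $x$, zero in $y$, arbitrary in $z$. In that multidegree the only letters available from $u^l$ and $u^r$ are their $x,y$-free, $z$-linear parts, which are \emph{both} equal to $a_0(z,z,0)$, together with a single $x$-linear, $y$-free letter; the difference $u^r-u^l$ contributes exactly $(0,(\beta-\alpha)(\ad_z)x,0)$, and the commutation identity $[(z,z,0),(0,g(\ad_z)x,0)]=0$, valid for any $g\in\k[[x]]$, annihilates every nonlinear term of the Campbell--Hausdorff series in the relevant component, leaving $h=\beta-\alpha$ and hence $f'=h$ by the preceding Proposition. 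This commutation identity (rather than the $\KV_3$ constraint you invoke) is the missing ingredient; without it, or an equivalent cancellation argument, your proposal establishes only the linear-order formula and does not prove the proposition.
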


\begin{proof}
Let $F=\exp(u)$ with $u=(a,b)$. Put  $a(x,y)= a_0 y + \alpha(\ad_y) x + \dots$ and
$b(x,y)=b_0 y + \beta(\ad_y) x + \dots$. Then, by Proposition \ref{prop:extra1},
the Duflo function associated to $F$ is a solution of equation $f'=\beta-\alpha$.

Denote 
$$
\begin{array}{lll}
u^l=u^{1,2}+u^{12,3} & = & (a(x,y)+a(x+y,z),b(x,y)+a(x+y,z), b(x+y,z)) \\
u^r=u^{2,3}+u^{1,23} & = & (a(x,y+z), a(y,z)+b(x,y+z), b(y,z)+b(x,y+z)) ,
\end{array}
$$
and observe that $\phi=\ch(-u^l, u^r)$. The contribution of 
$u^r-u^l$ in $B(x,0,z)_{x-lin}$ is equal to $\beta(\ad_z)x-\alpha(\ad_z)x$.
Note that the linear in $z$ contributions
in both $u^l$ and $u^r$ are of the form $(z,z,0)$. Since
$$
[(z,z,0), (0, h(\ad_z)x, 0)]=(0, h(\ad_z)[x,z]+[z, h(\ad_z)x],0)=0,
$$
we conclude that the nonlinear terms in the Campbell-Hausdorff
series $\ch(-u^l,u^r)$ do not contribute in $B(x,0,z)_{x-lin}$,
and $h(x)=\beta(x)-\alpha(x)$. Hence, $f'(x)=h(x)$, as required.
\end{proof}

\section{$\Z_2$-symmetry of the KV problem and hexagon equations}  \label{sec:sym}

In this Section we introduce an involution on $\tau$ the set of 
solutions of the generalized KV problem, and show that the corresponding
solutions of the pentagon equation verify a pair of hexagon equations.

\subsection{The automorphism $R$ and the Yang-Baxter equation} 
Let $R \in \Dert_2$ be an automorphism of $\lie_2$
defined on generators by $R(x)=e^{-\ad_y} x, R(y)=y$. Note that $R=\exp(r)$ for
$r=(y,0) \in \dert_2$, and
$$
R\, (\ch(y,x))=\ch(y, \exp(-\ad_y)x)=\ch(x,y).
$$
Denote by $\theta$ the inner derivation of $\lie_2$ with generator
$\ch(x,y)$. That is, for $a \in \lie_2$ we have $\theta(a)=[a, \ch(x,y)]$.
Note that the derivation $t=(y,x) \in \dert_2$ is an inner derivation
of $\lie_2$ with generator $x+y$. Indeed, $t(x)=[x,y]=[x,x+y]$ and
$t(y)=[y,x]=[y,x+y]$. Let $F\in \Dert_2$ be a solution of the first KV equation,
$F(x+y)=\ch(x,y)$. Then, $F tF^{-1}=\theta$. Indeed, for $a \in \lie_2$ we have
$$
FtF^{-1} (a)=F([F^{-1}(a), x+y])=[a, F(x+y)]=[a,\ch(x,y)]=\theta(a).
$$ 

\begin{proposition}
$R R^{2,1}=\exp(\theta)$.
\end{proposition}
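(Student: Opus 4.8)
The plan is to show that both $R R^{2,1}$ and $\exp(\theta)$ are automorphisms of $\lie_2$ which agree on the two generators $x$ and $y$; since $\lie_2$ is free on $x,y$, a Lie algebra automorphism is determined by its values on generators, so this equality on generators suffices.

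First I would make $R^{2,1}$ explicit. As $R=\exp(r)$ with $r=(y,0)$, and the $S_2$-action sends $u=(a(x,y),b(x,y))$ to $u^{2,1}=(b(y,x),a(y,x))$, we get $r^{2,1}=(0,x)$, so that $R^{2,1}=\exp((0,x))$ acts by $R^{2,1}(x)=x$ and $R^{2,1}(y)=e^{-\ad_x}y$. Reading the composite as $(R R^{2,1})(a)=R(R^{2,1}(a))$, the action on $x$ is immediate: $(R R^{2,1})(x)=R(x)=e^{-\ad_y}x$. For the action on $y$ I would use that $R$, being a Lie algebra homomorphism, intertwines the adjoint action, giving $R(e^{-\ad_x}y)=e^{-\ad_{R(x)}}R(y)=e^{-\ad_{R(x)}}y$. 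Writing $R(x)=e^{-\ad_y}x=e^{-y}xe^{y}$ and using the operator identity $\ad_{R(x)}=e^{-\ad_y}\,\ad_x\,e^{\ad_y}$, conjugation-invariance of the exponential yields $e^{-\ad_{R(x)}}=e^{-\ad_y}e^{-\ad_x}e^{\ad_y}$, and since $e^{\ad_y}y=y$ this gives $(R R^{2,1})(y)=e^{-\ad_y}e^{-\ad_x}y$.

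Next I would compute $\exp(\theta)$ directly. Since $\theta(a)=[a,\ch(x,y)]=-\ad_{\ch(x,y)}a$, we have $\exp(\theta)=\Ad_{e^{-\ch(x,y)}}$, that is $\exp(\theta)(a)=e^{-\ch(x,y)}\,a\,e^{\ch(x,y)}$, and by definition $e^{\ch(x,y)}=e^{x}e^{y}$. Then $\exp(\theta)(x)=e^{-y}e^{-x}\,x\,e^{x}e^{y}=e^{-\ad_y}x$ and $\exp(\theta)(y)=e^{-y}e^{-x}\,y\,e^{x}e^{y}=e^{-\ad_y}\bigl(e^{-\ad_x}y\bigr)$. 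Comparing with the previous step, the two automorphisms coincide on $x$ and on $y$, hence $R R^{2,1}=\exp(\theta)$.

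The main obstacle is the single subtle step, the operator-level conjugation $e^{-\ad_{R(x)}}=e^{-\ad_y}e^{-\ad_x}e^{\ad_y}$: one must use carefully that $R$ intertwines $\ad$ (so that $\ad$ of a conjugate is the conjugate of $\ad$) and track both the signs and the order of the exponentials. Everything else is a routine unwinding of the inner-automorphism formula $\exp(-\ad_c)=\Ad_{e^{-c}}$ together with the definition $e^{\ch(x,y)}=e^{x}e^{y}$.
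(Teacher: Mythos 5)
Your proof is correct and follows essentially the same route as the paper: both compute $R R^{2,1}$ and $\exp(\theta)$ on the generators $x$ and $y$ and match them, using $e^{\ch(x,y)}=e^x e^y$ and the identity $\ad_{R(x)}=e^{-\ad_y}\,\ad_x\,e^{\ad_y}$ (which the paper applies implicitly in the step $R(e^{-\ad_x}y)=\exp(-\ad(e^{-\ad_y}x))\,y$). Your version merely makes the operator-conjugation step explicit where the paper compresses it into the shorthand $\exp(-\ad(\ch(x,y)))$.
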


\begin{proof}
Note that $R^{2,1}(x)=x$ and $R^{2,1}(y)=e^{-\ad_x} y$.
We compute,
$$
R R^{2,1} (x)=R(x)=\exp(-\ad_y) x=\exp(-\ad(\ch(x,y))) x,
$$
and
$$
R R^{2,1} (y)=R(\exp(-\ad_x)y)=\exp(-\ad(\exp(-\ad_y)x)) y=\exp(-\ch(x,y)) y,
$$
as required.
\end{proof}

\begin{proposition} \label{prop:YB}
The element $R$ satisfies the Yang-Baxter equation,
$$
R^{1,2}R^{1,3}R^{2,3}=R^{2,3}R^{1,3}R^{1,2}.
$$
\end{proposition}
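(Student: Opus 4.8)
The plan is to verify the identity directly on the generators $x,y,z$ of $\lie_3$, since both sides are automorphisms of $\lie_3$ and an automorphism is determined by its values on generators. Recall that $R=\exp(r)$ with $r=(y,0)$, so $R^{1,2}=\exp(r^{1,2})$, $R^{1,3}=\exp(r^{1,3})$, $R^{2,3}=\exp(r^{2,3})$ with $r^{1,2}=(y,0,0)$, $r^{1,3}=(z,0,0)$, $r^{2,3}=(0,z,0)$. Exponentiating, $R^{1,2}$ sends $x\mapsto e^{-\ad_y}x$ and fixes $y,z$; $R^{1,3}$ sends $x\mapsto e^{-\ad_z}x$ and fixes $y,z$; and $R^{2,3}$ sends $y\mapsto e^{-\ad_z}y$ and fixes $x,z$. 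Following the convention of the paper, I read the products as composition of maps applied rightmost first, $(gh)(a)=g(h(a))$.

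On the generator $z$ every factor acts as the identity, so both sides return $z$. On $y$, the only factor moving $y$ is $R^{2,3}$, while on each side the remaining two factors fix both $y$ and $e^{-\ad_z}y$; hence both sides return $e^{-\ad_z}y$. These two cases are immediate.

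The only substantive case is the generator $x$. Using that each $R^{i,j}$ is an algebra automorphism, so that it conjugates $e^{-\ad_a}$ into $e^{-\ad_{R^{i,j}(a)}}$, I would compute for the left-hand side
\begin{equation*}
R^{1,2}R^{1,3}R^{2,3}(x)=R^{1,2}R^{1,3}(x)=R^{1,2}(e^{-\ad_z}x)=e^{-\ad_z}e^{-\ad_y}x,
\end{equation*}
and for the right-hand side
\begin{equation*}
R^{2,3}R^{1,3}R^{1,2}(x)=R^{2,3}R^{1,3}(e^{-\ad_y}x)=R^{2,3}(e^{-\ad_y}e^{-\ad_z}x)=e^{-\ad_{e^{-\ad_z}y}}e^{-\ad_z}x.
\end{equation*}

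It thus remains to prove the operator identity $e^{-\ad_z}e^{-\ad_y}=e^{-\ad_{e^{-\ad_z}y}}e^{-\ad_z}$ on $\lie_3$, which is the heart of the matter. This I would deduce from the conjugation rule $\phi\,\ad_a\,\phi^{-1}=\ad_{\phi(a)}$, valid for any automorphism $\phi$: taking $\phi=e^{-\ad_z}$ and $a=y$ gives $e^{-\ad_z}\ad_y\,e^{\ad_z}=\ad_{e^{-\ad_z}y}$, and exponentiating yields $e^{-\ad_z}e^{-\ad_y}e^{\ad_z}=e^{-\ad_{e^{-\ad_z}y}}$, which rearranges to the desired identity. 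The expected obstacle is purely bookkeeping: keeping the composition order and the signs of the adjoint exponentials consistent so that the two expressions for the action on $x$ genuinely agree; once the conjugation identity is in place the computation closes. One may also note that the infinitesimal shadow of this group equation is precisely the classical Yang--Baxter equation of Proposition \ref{prop:cybe}.
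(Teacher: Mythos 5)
Your proof is correct and follows essentially the same route as the paper: both verify the identity directly on the generators $x,y,z$ using the component formulas $R^{1,2}=(\exp(-\ad_y),1,1)$, $R^{1,3}=(\exp(-\ad_z),1,1)$, $R^{2,3}=(1,\exp(-\ad_z),1)$, with the same composition convention and with the generator $x$ as the only nontrivial case. The sole difference is one of explicitness: the paper passes from $R^{2,3}\bigl(\exp(-\ad_y)\exp(-\ad_z)x\bigr)$ to $\exp(-\ad_z)\exp(-\ad_y)x$ in a single step, silently using the conjugation identity $\exp(-\ad_{e^{-\ad_z}y})=e^{-\ad_z}e^{-\ad_y}e^{\ad_z}$ that you isolate and prove.
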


\begin{proof}
In components, we have $R^{1,2}=(\exp(-\ad_y),1,1), R^{1,3}=(\exp(-\ad_z),1,1)$
and $R^{2,3}=(1,\exp(-\ad_z),1)$. One easily computes both the left
hand side and the right hand side of the Yang-Baxter equation on
generators $y$ and $z$,
$ z\mapsto z$ and $y  \mapsto \exp(-\ad_z)y$. We compute the action of the 
left hand side on $x$:
$$
R^{1,2}R^{1,3}R^{2,3}(x)=R^{1,2}R^{1,3}(x)=R^{1,2}(\exp(-\ad_z) x)=
\exp(-\ad_z)\exp (-\ad_y) x,
$$
and the action of the right hand side,
$$
\begin{array}{lll}
R^{2,3}R^{1,3}R^{1,2}(x) & = & R^{2,3}R^{1,3}(\exp(-\ad_y)x) \\
 & = & R^{2,3}(\exp(-\ad_y) \exp(-\ad_z) x) \\
 & = & \exp(-\ad_z)\exp (-\ad_y) x 
\end{array}
$$
which completes the proof.
\end{proof}

\begin{proposition} \label{prop:R=RR}
$R^{12,3}=R^{1,3}R^{2,3}$.
Let $F \in \Dert_2$ be a solution of equation $F(x+y)=\ch(x,y)$. Then,
$F^{2,3}R^{1,23}(F^{2,3})^{-1}=R^{1,2}R^{1,3}$.
\end{proposition}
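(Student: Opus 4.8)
The plan is to prove the two identities separately, in each case reducing everything to the action on the generators $x,y,z$ of $\lie_3$.

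For $R^{12,3}=R^{1,3}R^{2,3}$, I would start from the relations recorded in the Example of Section~\ref{subsec:simplicial}, namely $r^{12,3}=r^{1,3}+r^{2,3}$, together with the commutativity $[r^{1,3},r^{2,3}]=0$ established in the proof of Proposition~\ref{prop:cybe} (the two derivations act on disjoint generators). Since $R=\exp(r)$ and the coproduct and simplicial maps are Lie algebra homomorphisms, these give $R^{12,3}=\exp(r^{12,3})=\exp(r^{1,3}+r^{2,3})=\exp(r^{1,3})\exp(r^{2,3})=R^{1,3}R^{2,3}$, where splitting the exponential is legitimate precisely because the two exponents commute. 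As a consistency check one can evaluate both sides on generators: each sends $x\mapsto e^{-\ad_z}x$, $y\mapsto e^{-\ad_z}y$ and $z\mapsto z$.

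For the second identity I would check that $F^{2,3}R^{1,23}(F^{2,3})^{-1}$ and $R^{1,2}R^{1,3}$ agree on each generator. Using $r^{1,23}=r^{1,2}+r^{1,3}$ (same Example), $R^{1,23}$ fixes $y$ and $z$ and sends $x\mapsto e^{-\ad_{y+z}}x$. Because $F\in\Dert_2$ satisfies $F(x+y)=\ch(x,y)$, the map $F^{2,3}$ fixes $x$ and satisfies $F^{2,3}(y+z)=\ch(y,z)$, while both $F^{2,3}$ and $(F^{2,3})^{-1}$ preserve the Lie subalgebra $\lie(y,z)$. On $y$ and $z$ this is immediate: $(F^{2,3})^{-1}$ carries them into $\lie(y,z)$, on which $R^{1,23}$ acts trivially, and $F^{2,3}$ then restores $y$ and $z$; this matches $R^{1,2}R^{1,3}(y)=y$ and $R^{1,2}R^{1,3}(z)=z$, since $R^{1,2}$ and $R^{1,3}$ fix both $y$ and $z$.

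The crux, and the only nonformal step, is the action on $x$. Here $(F^{2,3})^{-1}(x)=x$, so the left-hand side gives $F^{2,3}(e^{-\ad_{y+z}}x)$. Rewriting $e^{-\ad_{y+z}}x=\Ad_{e^{-(y+z)}}x$ and using that $F^{2,3}$ is an algebra automorphism with $F^{2,3}(x)=x$ and $F^{2,3}(y+z)=\ch(y,z)$, this equals $\Ad_{e^{-\ch(y,z)}}x=e^{-\ad_{\ch(y,z)}}x$. I expect the main obstacle to be the identity $e^{-\ad_{\ch(y,z)}}=e^{-\ad_z}e^{-\ad_y}$, which I would derive from the very definition $e^{\ch(y,z)}=e^{y}e^{z}$ together with the homomorphism property $\Ad_{ab}=\Ad_a\Ad_b$: indeed $e^{-\ch(y,z)}=(e^{y}e^{z})^{-1}=e^{-z}e^{-y}$, whence $\Ad_{e^{-\ch(y,z)}}=\Ad_{e^{-z}}\Ad_{e^{-y}}=e^{-\ad_z}e^{-\ad_y}$. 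Since $R^{1,2}R^{1,3}(x)=e^{-\ad_z}e^{-\ad_y}x$, the two sides agree on $x$ as well, completing the verification.
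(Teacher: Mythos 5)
Your proposal is correct and follows essentially the same route as the paper: both identities are verified on the generators $x,y,z$, with the action on $x$ reduced to $e^{-\ad_{\ch(y,z)}}=e^{-\ad_z}e^{-\ad_y}$ via $e^{\ch(y,z)}=e^ye^z$ and the homomorphism property of $\Ad$. Your only deviation is cosmetic: for $R^{12,3}=R^{1,3}R^{2,3}$ you pass through the Lie-algebra identity $r^{12,3}=r^{1,3}+r^{2,3}$ with commuting exponents, whereas the paper simply observes that both sides equal the automorphism $(\exp(-\ad_z),\exp(-\ad_z),1)\in\Dert_3$ --- which is exactly your consistency check.
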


\begin{proof}
For the first equation, note that both sides
are represented by the automorphism $(\exp(-\ad_z), \exp(-\ad_z), 1) \in \Dert_3$.

For the second equation, both the left hand side and the right
hand side preserve generators $y$ and $z$, $y \mapsto y, z \mapsto z$. 
It remains to compute the action on $x$:
$$
F^{2,3}R^{1,23}(F^{2,3})^{-1}(x)=F^{2,3}R^{1,23}(x)=
F^{2,3}(\exp(-\ad_{y+z}) x)=\exp(-\ch(y,z))x,
$$
and the same for the right hand side
$$
R^{1,2}R^{1,3}(x)=R^{1,2}(\exp(-\ad_z)x)=\exp(-\ad_z)\exp(-\ad_y)x=
\exp(-\ch(y,z))x,
$$
as required.
\end{proof}

\subsection{Involution on $\Sol(\HKV)$}
In this Section we introduce and study a certain involution
on the set of solutions of the KV problem.

\begin{proposition}
Let $F \in \Sol(\HKV)$. Then, $\tau(F)=R F^{2,1} e^{-t/2}$ is a solution of the 
KV problem, $\tau(F) \in \Sol(\HKV)$. The map $\tau$ is an involution, $\tau^2=1$.
\end{proposition}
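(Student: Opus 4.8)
The plan is to verify the three assertions in turn: that $\tau(F)$ again satisfies the two defining conditions \eqref{eq:newkv1} and \eqref{eq:newkv2} of the generalized KV problem, and that $\tau$ squares to the identity. Since $R$, $F^{2,1}$ and $e^{-t/2}$ all lie in the group $\Dert_2$, their product $\tau(F)$ lies in $\Dert_2$ automatically, so only the two KV conditions and the relation $\tau^2=1$ require work.

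For the first KV equation I would evaluate $\tau(F)$ on $x+y$ from the inside out. Because $t\in\sder_2$ we have $e^{-t/2}(x+y)=x+y$. Next, since the $S_2$-action on automorphisms is induced by relabelling the generators, $F^{2,1}$ is conjugate to $F$ by the swap $x\leftrightarrow y$, so $F^{2,1}(x+y)=\ch(y,x)$ using $F(x+y)=\ch(x,y)$. Finally, the identity $R(\ch(y,x))=\ch(x,y)$ established just before the proposition gives $\tau(F)(x+y)=R(\ch(y,x))=\ch(x,y)$, so \eqref{eq:newkv1} holds.

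For the cocycle condition I would expand $j(\tau(F))$ using the group cocycle identity \eqref{eq:j1} twice. The two outer factors contribute nothing: $j(R)=0$ since $\dv(r)=0$ for $r=(y,0)$, and $j(e^{-t/2})=0$ since $t\in\kv_2$, so $e^{-t/2}\in\KV_2$. This reduces the computation to $j(\tau(F))=R\cdot j(F^{2,1})$. The key point is the equivariance $j(F^{2,1})=(j(F))^{2,1}$, which I would deduce from the $S_2$-equivariance of $\dv$ on its defining formula together with the expression of $j$ in terms of $\dv$. Writing $j(F)=\tidelta(\Tr(f(x)))=\Tr(f(x)-f(\ch(x,y))+f(y))$ with $f\in x^2\k[[x]]$, applying $\sigma$ swaps $x\leftrightarrow y$, and then applying $R$ and using $R(\ch(y,x))=\ch(x,y)$ together with cyclicity of the trace (which gives $\Tr(f(e^{-\ad_y}x))=\Tr(f(x))$ and $\Tr(f(y))$ unchanged) returns exactly $j(F)$. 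Hence $j(\tau(F))=j(F)\in{\rm im}(\tidelta)$, so $\tau(F)\in\Sol(\HKV)$, and as a bonus the Duflo function is preserved. I expect this trace bookkeeping, and in particular the equivariance of $j$, to be the main obstacle.

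Finally, for $\tau^2=1$ I would compute $\tau(\tau(F))=R\,(\tau(F))^{2,1}\,e^{-t/2}$. Using $(F^{2,1})^{2,1}=F$ together with $t^{2,1}=t$ (so that $(e^{-t/2})^{2,1}=e^{-t/2}$) gives $(\tau(F))^{2,1}=R^{2,1}\,F\,e^{-t/2}$, whence $\tau^2(F)=R\,R^{2,1}\,F\,e^{-t}$. Now the proposition $R\,R^{2,1}=\exp(\theta)$ and the relation $FtF^{-1}=\theta$, valid for any solution of the first KV equation, give $\exp(\theta)=F\,e^{t}\,F^{-1}$, so that $\tau^2(F)=F\,e^{t}\,F^{-1}\,F\,e^{-t}=F$, as required.
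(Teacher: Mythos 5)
Your proposal is correct and follows essentially the same route as the paper: the same inside-out evaluation on $x+y$ using $R(\ch(y,x))=\ch(x,y)$, the same reduction $j(\tau(F))=R\cdot j(F^{2,1})$ via $j(R)=j(e^{-t/2})=0$ followed by the trace computation showing $R\cdot j(F^{2,1})=j(F)$, and the same identities $t^{2,1}=t$, $RR^{2,1}=\exp(\theta)$ and $FtF^{-1}=\theta$ for $\tau^2=1$. The only difference is cosmetic: you spell out the $S_2$-equivariance of $j$ and the cyclicity argument for $\Tr(f(e^{-\ad_y}x))=\Tr(f(x))$, which the paper uses implicitly.
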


\begin{proof}
We compute, 
$$
\tau(F) (x+y)= R F^{2,1} e^{-t/2} (x+y) = R F^{2,1} (x+y)=
R (\ch(y,x))=\ch(x,y).
$$
Furthermore,
$$
j(\tau(F))=j(R F^{2,1} e^{-t/2})=R \cdot j(F^{2,1}).
$$
Here we used that $\dv(r)=\dv(t)=0$ and $j(R)=j(\exp(-t/2))=0$.
Let $f \in x^2\k[[x]]$ such that $j(F)=\Tr(f(x)-f(\ch(x,y))+f(y))$.
Then, $j(F^{2,1})=\Tr(f(x)-f(\ch(y,x))+f(y))$ and
$R \cdot j(F^{2,1})=\Tr(f(x)-f(\ch(x,y))+f(y))=j(F)$.
Hence, $\tau(F)$ is a solution of the KV problem.

Finally,
$$
\tau^2(F)=R \tau(F)^{2,1} e^{-t/2}=
R R^{2,1} F e^{-t} =e^\theta F e^{-t}=F,
$$
where we used $t^{2,1}=t$, $RR^{2,1}=\exp(\theta)$ and $FtF^{-1}=\theta$.
We conclude that $\tau^2=1$, and $\tau$ defines an involution on
$\Sol(\HKV)$.
\end{proof}

\begin{proposition}   \label{prop:tauphi}
Let $F\in \Sol(\HKV)$ and let $\Phi_F$ be the corresponding solution
of the pentagon equation. Then,
$$
\Phi_{\tau(F)} = (\Phi_F^{3,2,1})^{-1} .
$$ 
\end{proposition}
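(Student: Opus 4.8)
The plan is first to strip the factor $e^{-t/2}$ from $\tau(F)=RF^{2,1}e^{-t/2}$ by a twist argument, and only then to reduce the remaining expression to a product of copies of $F$ using the $R$-matrix relations. I would begin by observing that $H:=RF^{2,1}$ already lies in $\Sol(\HKV)$: indeed $H(x+y)=R(\ch(y,x))=\ch(x,y)$, and $j(H)=R\cdot j(F^{2,1})=\Tr(f(x)-f(\ch(x,y))+f(y))\in{\rm im}(\tidelta)$, exactly as in the proof that $\tau(F)\in\Sol(\HKV)$. Thus $\tau(F)=Hg$ with $g=e^{-t/2}\in\HKV_2$, and the Drinfeld twist formula \eqref{eq:twist} gives
\[
\Phi_{\tau(F)}=(g^{12,3})^{-1}(g^{1,2})^{-1}\Phi_{H}\,g^{2,3}g^{1,23}.
\]
Since $g$ is a power of $t$, the computation in the proof of Proposition \ref{prop:F1F2} applies verbatim: the two factors collapse to conjugation by a central power of $e^{c}$ (with $c=t^{1,2}+t^{1,3}+t^{2,3}$), which acts trivially on $\Phi_{H}\in\KV_3$, so that $\Phi_{\tau(F)}=\Phi_{H}=\Phi_{RF^{2,1}}$. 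This single step disposes of all the $t$-bookkeeping.

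It then remains to compute $\Phi_{RF^{2,1}}$ and match it with $(\Phi_F^{3,2,1})^{-1}$. Applying the permutation $3,2,1$ (the transposition $1\leftrightarrow3$) to the four factors of $\Phi_F$ sends $F^{12,3}\mapsto F^{23,1}$, $F^{1,2}\mapsto F^{3,2}$, $F^{2,3}\mapsto F^{2,1}$ and $F^{1,23}\mapsto F^{3,12}$, whence
\[
(\Phi_F^{3,2,1})^{-1}=(F^{3,12})^{-1}(F^{2,1})^{-1}F^{3,2}F^{23,1}.
\]
On the other side, since the simplicial and coproduct maps are homomorphisms and $(F^{2,1})^{2,3}=F^{3,2}$, $(F^{2,1})^{1,23}=F^{23,1}$, $(F^{2,1})^{12,3}=F^{3,12}$ (each checked on components), the blocks of $H=RF^{2,1}$ read $H^{1,2}=R^{1,2}F^{2,1}$, $H^{2,3}=R^{2,3}F^{3,2}$, $H^{1,23}=R^{1,23}F^{23,1}$ and $H^{12,3}=R^{12,3}F^{3,12}$. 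Substituting into the definition of $\Phi_H$, the two target factors $(F^{3,12})^{-1}$ and $F^{23,1}$ already sit at the ends, and the whole claim reduces to the purely $R$-and-$F$ identity
\[
(R^{12,3})^{-1}(F^{2,1})^{-1}(R^{1,2})^{-1}R^{2,3}F^{3,2}R^{1,23}=(F^{2,1})^{-1}F^{3,2}.
\]

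To establish this identity I would work from the right using four ingredients. First, the $(23)$-conjugate of the second relation of Proposition \ref{prop:R=RR}, namely $F^{3,2}R^{1,23}(F^{3,2})^{-1}=R^{1,3}R^{1,2}$, moves $R^{1,23}$ across $F^{3,2}$. Next, the Yang-Baxter equation (Proposition \ref{prop:YB}) rewrites $R^{2,3}R^{1,3}R^{1,2}=R^{1,2}R^{1,3}R^{2,3}$, cancelling $(R^{1,2})^{-1}R^{1,2}$. Then the first relation of Proposition \ref{prop:R=RR}, $R^{1,3}R^{2,3}=R^{12,3}$, recombines the surviving $R$'s. Finally I would use that $F^{2,1}$ commutes with $R^{12,3}$: this holds because $F^{2,1}$ fixes $x_3$ and acts inside $\lie(x_1,x_2)$, while $R^{12,3}$ acts on $\lie(x_1,x_2)$ as the automorphism $e^{-\ad_{x_3}}$ and fixes $x_3$, so the two agree on every generator. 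After these four moves the left-hand side collapses to $(F^{2,1})^{-1}F^{3,2}$, which finishes the proof.

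The computation is short once the twist reduction is in place, so the only genuine subtleties are bookkeeping: getting the relabellings $(F^{2,1})^{i,\cdots}$ right, and verifying the two auxiliary facts, the $(23)$-image of Proposition \ref{prop:R=RR} and the commutation $[F^{2,1},R^{12,3}]=0$. I expect the latter commutation, though elementary, to be the step most easily missed, since it is precisely what makes the leftover $(R^{12,3})^{-1}$ and $R^{12,3}$ annihilate rather than leaving behind a stray inner automorphism.
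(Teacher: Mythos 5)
Your proof is correct and takes essentially the same route as the paper's: you strip the $e^{-t/2}$ factor using the centrality of $c$ (packaged via the twist formula \eqref{eq:twist} and the computation from Proposition \ref{prop:F1F2}, where the paper instead absorbs the $t$-exponentials into $e^{\pm c/2}$ inline), and then cancel the $R$-factors by exactly the same three ingredients the paper uses --- both relations of Proposition \ref{prop:R=RR} (the second in its $(23)$-conjugated form), the Yang--Baxter equation of Proposition \ref{prop:YB}, and the commutation $[F^{2,1},R^{12,3}]=0$. Your explicit check of that last commutation, via $R^{12,3}=\exp(-\ad_{x_3})$ being an inner automorphism and $F^{2,1}$ fixing $x_3$, is a step the paper performs silently in this proof and only records in the later hexagon proposition, so flagging it was the right instinct.
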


\begin{proof}
We compute,
$$
\begin{array}{lll}
\Phi_{\tau(F)} & = & e^{t^{12,3}/2} (F^{3,21})^{-1}(R^{12,3})^{-1}
e^{t^{1,2}/2} (F^{2,1})^{-1} (R^{1,2})^{-1} R^{2,3} F^{3,2}
e^{-t^{2,3}/2} R^{1,23} F^{32,1} e^{-t^{1,23}/2} \\
& = & e^{c/2} (F^{3,21})^{-1}(R^{12,3})^{-1}
(F^{2,1})^{-1} (R^{1,2})^{-1} R^{2,3} F^{3,2} R^{1,23} F^{32,1} e^{-c/2} \\
& = & e^{c/2} (F^{3,21})^{-1} (F^{2,1})^{-1} 
(R^{2,3})^{-1} (R^{1,3})^{-1} (R^{1,2})^{-1} R^{2,3} R^{1,3} R^{1,2}
F^{3,2} F^{32,1} e^{-c/2} \\
& = & e^{c/2} (F^{3,21})^{-1} (F^{2,1})^{-1} F^{3,2} F^{32,1} e^{-c/2} = 
e^{c/2} (\Phi^{3,2,1})^{-1} e^{-c/2} = (\Phi^{3,2,1})^{-1} .
\end{array}
$$
Here in passing from the first to the second line we used that
$g^{1,2}h^{12,3}=h^{12,3}g^{1,2}$ for $g \in \Sder_2, h\in \Dert_2$,
and the definition of the element $c=t^{1,2}+t^{1,3}+t^{2,3} \in \t_3$;
Proposition \ref{prop:R=RR} in the passage from the second to the third line;
and finally the Yang-Baxter equation (Proposition \ref{prop:YB}) and the fact that $c$ is
central in $\kv_3$ in the passage from the third to the fourth line.
\end{proof}

\begin{proposition}  \label{prop:kappatauF}
Let $F \in \Sol(\HKV)$ and $\kappa(F)=(A(x,y),B(x,y)) \in \dert_2$.
Then,
\begin{equation}  \label{eq:kappatauF}
\kappa(\tau(F))=\left( e^{\ad_x} B(y,x) +\frac{1}{2}(\ch(x,y)-x),
e^{-\ad_y} A(y,x) - \frac{1}{2}(\ch(x,y)-y) \right) .
\end{equation}
\end{proposition}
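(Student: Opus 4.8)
The plan is to compute $\kappa(\tau(F))=l-\tau(F)\,l\,\tau(F)^{-1}$ directly, where $l$ is the Euler derivation ($l(x_i)=x_i$), by conjugating $l$ inward through the three factors of $\tau(F)=RF^{2,1}e^{-t/2}$. The basic rule is $h\,l\,h^{-1}=l-\kappa(h)$, so I first record the three elementary values. Since $r$ and $t$ are of degree one with $[l,r]=r$ and $[l,t]=t$, a one-line exponential computation gives $\kappa(R)=r$ and $\kappa(e^{-t/2})=-\tfrac12 t$. Because $l$ is $S_2$-invariant, $\kappa$ intertwines the $S_2$-action, whence $\kappa(F^{2,1})=\kappa(F)^{2,1}=(B(y,x),A(y,x))$. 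I also need the conjugates of the inner derivation $t$ (whose generator is $x+y$): conjugating by $F^{2,1}$ produces the inner derivation with generator $F^{2,1}(x+y)=\ch(y,x)$, and conjugating this further by $R$ produces, via $R(\ch(y,x))=\ch(x,y)$, the inner derivation $\theta=(\ch(x,y),\ch(x,y))$.

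Assembling these (pushing $l$ through $e^{-t/2}$, then $F^{2,1}$, then $R$) yields
\[
\kappa(\tau(F))=r+R\,\kappa(F)^{2,1}\,R^{-1}-\tfrac12\theta .
\]
The only remaining work is the conjugation $w:=R\,(B(y,x),A(y,x))\,R^{-1}$. Here I would use that $R=\exp(r)$ is the inner automorphism $a\mapsto e^{-\ad_y}a$, so $w=e^{-\ad_y}\,v\,e^{\ad_y}$ with $v=(B(y,x),A(y,x))$, and evaluate $w$ on the generators in $\Ass_2$. On $y$ this is immediate: $w(y)=[y,e^{-\ad_y}A(y,x)]$. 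On $x$ I would expand $v(e^{\ad_y}x)$ by the Leibniz rule; since $v(y)=[y,A(y,x)]$ telescopes to $v(e^{\pm y})=[e^{\pm y},A(y,x)]$, applying $e^{-\ad_y}$ collapses the result to $w(x)=[x,\,B(y,x)-A(y,x)+e^{-\ad_y}A(y,x)]$. Thus $w=\big(B(y,x)-A(y,x)+e^{-\ad_y}A(y,x),\ e^{-\ad_y}A(y,x)\big)$.

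Substituting into the displayed formula gives the two components of $\kappa(\tau(F))$ up to the ambiguities $\k x$ (first slot) and $\k y$ (second slot) inherent in tangential data. The second component is $e^{-\ad_y}A(y,x)-\tfrac12\ch(x,y)$, which agrees with the claimed $e^{-\ad_y}A(y,x)-\tfrac12(\ch(x,y)-y)$ modulo $\k y$. For the first component the raw answer $y+B(y,x)-A(y,x)+e^{-\ad_y}A(y,x)-\tfrac12\ch(x,y)$ must be converted into the asymmetric form $e^{\ad_x}B(y,x)+\tfrac12(\ch(x,y)-x)$; this is exactly where I invoke the first Kashiwara--Vergne equation \eqref{eq:oldkv1} for $\kappa(F)$ (Theorem \ref{th:old=new}), read with $x$ and $y$ interchanged,
\[
x+y-\ch(x,y)=(1-e^{-\ad_y})A(y,x)+(e^{\ad_x}-1)B(y,x),
\]
to rewrite $A(y,x)-e^{-\ad_y}A(y,x)$. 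A short cancellation then shows that the difference of the two first-slot expressions lies in $\k x$ (it equals $-\tfrac12 x$), completing the identification.

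I expect the main obstacle to be the explicit conjugation computation for $w(x)$ --- the Leibniz expansion and telescoping in $\Ass_2$ --- together with the bookkeeping of the linear ($\k x$, $\k y$) terms and the precise coefficients of the $\ch$-terms. The key conceptual point, once the conjugation is done, is recognizing that the first KV equation is precisely the relation needed to trade the combination $A(y,x)-e^{-\ad_y}A(y,x)$ for the term $e^{\ad_x}B(y,x)$ appearing in the statement.
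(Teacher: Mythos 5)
Your proposal is correct and follows essentially the same route as the paper's proof: the same three-term decomposition $\kappa(\tau(F)) = r + R\,\kappa(F)^{2,1}R^{-1} - \tfrac{1}{2}\,RF^{2,1}t(F^{2,1})^{-1}R^{-1}$ (the paper obtains it from $\frac{d\,\tau(F)_s}{ds}|_{s=1}\,\tau(F)^{-1}$, which by the paper's own identity $u=l-glg^{-1}$ is the same computation as your $l$-conjugation), the same telescoping evaluation of $R\,\kappa(F)^{2,1}R^{-1}$ on the generators, and the same decisive use of equation \eqref{eq:oldkv1} with $x$ and $y$ exchanged. The only cosmetic difference is normalization bookkeeping: the paper represents the conjugated $t$ by the normalized tuple $(\ch(x,y)-x,\ch(x,y)-y)$ from the outset, whereas you carry $(\ch(x,y),\ch(x,y))$ and check a posteriori that the discrepancy lies in $\k x\oplus\k y$ --- your residual $-\tfrac{1}{2}x$ is exactly that normalization shift, so both computations agree as equalities in $\dert_2$.
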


\begin{proof}
We compute,
$$
\kappa(\tau(F))=\frac{d \tau(F)_s}{ds}|_{s=1} \, \tau(F)^{-1}=
r + R \, \frac{d F^{2,1}_s}{ds}|_{s=1} (F^{2,1})^{-1} R^{-1}
-\frac{1}{2} RF^{2,1} t (F^{2,1})^{-1}R^{-1} ,
$$
where we used that $dR_s R_s^{-1}=r=(y,0) \in \dert_2$. In the last term,
$F^{2,1} t (F^{2,1})^{-1}$ is the inner derivation with generator $\ch(y,x)$,
and $RF^{2,1} t (F^{2,1})^{-1}R^{-1}$ is an inner derivation with
generator $\ch(x,y)$. With our normalization condition, it is represented
by $(\ch(x,y)-x, \ch(x,y)-y) \in \dert_2$. 

Finally, for the middle term $R \kappa(F)^{2,1} R^{-1}$ we compute,
$$
\begin{array}{lll}
R(A,B)^{2,1}R^{-1}(x) & = & R (B(y,x), A(y,x)) e^{\ad_y}(x) \\
& = & R(e^{\ad_y}[x, B(y,x)] + e^{\ad_y} [A(y,x),x] - [A(y,x), e^{\ad_y}(x)]) \\
& = & [x, B(y,x) + (e^{-\ad_y} - 1) A(y,x)] \\
& = & [x, e^{\ad_x} B(y,x) + \ch(x,y) -x-y] .
\end{array}
$$
Here in the passage to the last line we have used equation \eqref{eq:oldkv1}
(with $x$ and $y$ exchanged).
For the action on $y$ we compute, 
$$
R(A,B)^{2,1}R^{-1}(y)=R(B(y,x)A(y,x))(y)=R([y,A(y,x)])=[y, e^{-\ad_y} A(y,x)] .
$$
By adding up all three terms we obtain,
$$
\begin{array}{lll}
\kappa(\tau(F)) & = & (e^{\ad_x} B(y,x) + \ch(x,y) -x-y, e^{-\ad_y} A(y,x)) \\
 & + & (y,0) - \frac{1}{2} (\ch(x,y)-x, \ch(x,y)-y) \\
& = & ( e^{\ad_x} B(y,x) +\frac{1}{2}(\ch(x,y)-x),
e^{-\ad_y} A(y,x) - \frac{1}{2}(\ch(x,y)-y) ,
\end{array}
$$
as required.
\end{proof}

\begin{remark}
Symmetry \eqref{eq:kappatauF} has been introduced in \cite{kv} (see discussion
after Proposition 5.3).
\end{remark}

\subsection{Symmetric solutions of the KV problem}

\begin{definition}
An element $F\in \Sol(\HKV)$ is called {\em a symmetric solution} of the 
generalized Kashiwara-Vergne conjecture if $\tau(F)=F$.
\end{definition}

We shall denote the set of symmetric solutions by $\Sol^\tau(\HKV)$.
Since the map $\kappa: \Dert_2 \to \dert_2$ is a bijection,
$\tau(F)=F$ if and only if $\kappa(\tau(F))=\kappa(F)$.
That is, $\kappa(F)=(A(x,y), B(x,y))$ satisfies the (equivalent)
linear equations
$$
A(x,y)=e^{\ad_x} B(y,x) + \frac{1}{2}(\ch(x,y)-x) \, , \,
B(x,y)=e^{-\ad_y} A(y,x) - \frac{1}{2}(\ch(x,y)-y) .
$$
Since equations \eqref{eq:oldkv1} and \eqref{eq:oldkv2} are linear
in $A$ and $B$, one can average an arbitrary solution to obtain
a symmetric solution $\tilde{F}$ with $\kappa(\tilde{F})=(\kappa(F)+\kappa(\tau(F)))/2$.

The involution $u\mapsto u^{2,1}$ acts on the Lie algebra $\hkv_2$,
and it lifts to the group $\HKV_2$. We shall denote the corresponding 
invariant subalgebra by $\hkv_2^{sym} \subset \hkv_2$ and the invariant subgroup
by $\HKV_2^{sym} \subset \HKV_2$.

\begin{proposition}
The group $\HKV_2^{sym}$ acts on the set $\Sol^\tau(\HKV)$ by multiplications
on the right. This action is free and transitive.
\end{proposition}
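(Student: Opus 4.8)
The plan is to transport the free and transitive action of $\HKV_2$ on $\Sol(\HKV)$ from Theorem \ref{th:solhkv} to the fixed loci of two compatible involutions, namely $\Sol^\tau(\HKV) \subset \Sol(\HKV)$ (the fixed points of $\tau$) and $\HKV_2^{sym} \subset \HKV_2$ (the fixed points of $g \mapsto g^{2,1}$). The crux of the argument, and the only substantive computation, is the equivariance identity
\[
\tau(Fg) = \tau(F)\, g^{2,1}, \qquad F \in \Sol(\HKV), \ g \in \HKV_2.
\]
To establish it I would write $\tau(Fg) = R(Fg)^{2,1} e^{-t/2}$, use that $g \mapsto g^{2,1}$ is a group automorphism of $\Dert_2$ to factor $(Fg)^{2,1} = F^{2,1} g^{2,1}$, and then commute $g^{2,1}$ past $e^{-t/2}$. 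Since $t$ is central in $\hkv_2$, the element $e^{-t/2}$ is central in $\HKV_2$, so $g^{2,1} e^{-t/2} = e^{-t/2} g^{2,1}$ and therefore $\tau(Fg) = R F^{2,1} e^{-t/2} g^{2,1} = \tau(F)\, g^{2,1}$. I would also record that $g^{2,1} \in \HKV_2$, which holds because the involution $u \mapsto u^{2,1}$ preserves $\hkv_2$ and lifts to the group, so all these manipulations remain inside $\HKV_2$.

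Granting the identity, the action is well defined: for $F \in \Sol^\tau(\HKV)$ and $g \in \HKV_2^{sym}$ one has $\tau(Fg) = \tau(F)\, g^{2,1} = F g$, so $Fg$ is again a symmetric solution. Freeness is inherited directly from Theorem \ref{th:solhkv}, since right multiplication by a nontrivial element of $\HKV_2 \supset \HKV_2^{sym}$ already moves every $F$ (automorphisms cancel).

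For transitivity I would take $F_1, F_2 \in \Sol^\tau(\HKV)$ and use transitivity of the full $\HKV_2$-action to produce the unique $g \in \HKV_2$ with $F_2 = F_1 g$. Applying $\tau$, using $\tau(F_1) = F_1$, $\tau(F_2) = F_2$, and the equivariance identity gives $F_2 = \tau(F_2) = \tau(F_1 g) = F_1 g^{2,1}$. Comparing with $F_2 = F_1 g$ and invoking freeness forces $g^{2,1} = g$, that is $g \in \HKV_2^{sym}$; this is precisely transitivity of the restricted action. The main obstacle is thus concentrated in the equivariance identity — in particular in the centrality of $e^{-t/2}$ in $\HKV_2$ — after which the free-and-transitive structure follows formally from Theorem \ref{th:solhkv}.
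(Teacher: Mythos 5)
Your proof is correct and takes essentially the same route as the paper: both arguments rest on the single computation that $g\mapsto g^{2,1}$ is a group automorphism and that $e^{\pm t/2}$ is central in $\HKV_2$ (as $t$ is central in $\hkv_2$). The paper verifies $\tau(Fg)=Fg$ directly and computes $g^{2,1}=e^{-t/2}\,g\,e^{t/2}=g$ from $F_i^{2,1}=R^{-1}F_i e^{t/2}$, whereas you package the identical computation into the equivariance identity $\tau(Fg)=\tau(F)\,g^{2,1}$ and conclude by cancellation --- a harmless repackaging of the same proof.
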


\begin{proof}
Let $g\in \HKV_2^{sym}$ and $F\in \Sol^{\tau}(\HKV)$. By Theorem \ref{th:solhkv},
$Fg \in \Sol(\HKV)$. By applying $\tau$ we obtain
$$
\tau(Fg)=R F^{2,1} g^{2,1} e^{-t/2}= R F^{2,1} e^{-t/2} g=\tau(F) g=Fg.
$$
Hence, $Fg \in \Sol^{\tau}(\HKV)$.

Consider two elements $F_1, F_2 \in \Sol^{\tau}(\HKV)$. We denote $g=F_1^{-1} F_2$
and compute
$$
g^{2,1}=(F_1^{-1} F_2)^{2,1}=(R^{-1} F_1 e^{t/2})^{-1} (R^{-1} F_2 e^{t/2})=
e^{-t/2} (F_1^{-1} F_2) e^{t/2}=e^{-t/2} g e^{t/2}=g,
$$
as required.
\end{proof}

\begin{remark}
Note that the element $t=(y,x)$ as well as the image of the 
injection $\nu: \grt \to \hkv_2$ is contained in $\hkv_2^{sym}$.
In fact, it is not known whether any non-symmetric 
elements of $\hkv_2$ exist. If correct, Conjecture stated in the end of 
Section~\ref{sec:KVlie} would imply $\hkv_2 = \hkv_2^{sym}$. 
\end{remark}

\begin{proposition}
Let $F \in \Sol^\tau(\HKV)$, and let $\Phi \in \KV_3$ be the corresponding
solution of the pentagon equation. Then,
\begin{equation} \label{eq:inverse}
\Phi^{1,2,3} \Phi^{3,2,1} = e,
\end{equation}
\begin{equation}  \label{eq:hex1}
e^{(t^{1,3}+t^{2,3})/2}=\Phi^{2,1,3} e^{t^{1,3}/2} (\Phi^{2,3,1})^{-1} 
e^{t^{2,3}/2} \Phi^{3,2,1}
\end{equation}
and
\begin{equation}  \label{eq:hex2}
e^{(t^{1,2}+t^{1,3})/2}=(\Phi^{1,3,2})^{-1} e^{t^{1,3}/2} \Phi^{3,1,2} 
e^{t^{1,2}/2} (\Phi^{3,2,1})^{-1}
\end{equation}
\end{proposition}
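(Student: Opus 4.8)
The plan is to treat the three identities in turn, taking the inversion equation \eqref{eq:inverse} as the entry point and then bootstrapping the two hexagons from it together with the $R$-matrix relations of Proposition \ref{prop:R=RR}.

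The inversion equation is essentially immediate. Since $F$ is symmetric we have $\tau(F)=F$, so the associated pentagon solutions coincide, $\Phi_{\tau(F)}=\Phi_F=\Phi$. On the other hand Proposition \ref{prop:tauphi} gives $\Phi_{\tau(F)}=(\Phi^{3,2,1})^{-1}$. Comparing the two expressions yields $\Phi^{1,2,3}\Phi^{3,2,1}=e$, which is \eqref{eq:inverse}. Applying the $S_3$-action by automorphisms to this identity upgrades it to $(\Phi^{a,b,c})^{-1}=\Phi^{c,b,a}$ for every permutation $(a,b,c)$ of $(1,2,3)$, and I will use this form repeatedly below.

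The key to the hexagons is to express the non-symmetric braiding $R$ through $F$ and the symmetric braiding $e^{t/2}$. Symmetry of $F$ means $F=\tau(F)=RF^{2,1}e^{-t/2}$, whence $R=Fe^{t/2}(F^{2,1})^{-1}$; applying the simplicial and coproduct maps (and the rule $(F^{2,1})^{P,Q}=F^{Q,P}$) gives the placements $R^{1,3}=F^{1,3}e^{t^{1,3}/2}(F^{3,1})^{-1}$, $R^{2,3}=F^{2,3}e^{t^{2,3}/2}(F^{3,2})^{-1}$ and $R^{12,3}=F^{12,3}e^{t^{12,3}/2}(F^{3,12})^{-1}$. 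For \eqref{eq:hex1} I would start from the clean relation $R^{12,3}=R^{1,3}R^{2,3}$ of Proposition \ref{prop:R=RR}; substituting the placements above and solving for $e^{t^{12,3}/2}$ produces $e^{t^{12,3}/2}=(F^{12,3})^{-1}F^{1,3}e^{t^{1,3}/2}(F^{3,1})^{-1}F^{2,3}e^{t^{2,3}/2}(F^{3,2})^{-1}F^{3,12}$, where $t^{12,3}=t^{1,3}+t^{2,3}$. Independently I expand the right-hand side of \eqref{eq:hex1} by inserting the definitions of the permuted associators $\Phi^{2,1,3}$, $\Phi^{2,3,1}$, $\Phi^{3,2,1}$ in terms of $F$ (obtained from $\Phi=(F^{12,3})^{-1}(F^{1,2})^{-1}F^{2,3}F^{1,23}$ by the substitutions $x\leftrightarrow y$, $x\mapsto y\mapsto z\mapsto x$, and $x\leftrightarrow z$ respectively). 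The two expressions are then forced to agree by two commutation facts: the lemma $[u^{1,\dots,k},v^{1\dots k,\dots}]=0$ for $u\in\sder$, which lets $F^{2,13}$ pass through $e^{t^{1,3}/2}$ and $F^{23,1}$ pass through $e^{t^{2,3}/2}$; and the relation $[F^{2,1},R^{12,3}]=0$, which holds because $R^{12,3}$ acts on $\langle x,y\rangle$ as the conjugation $e^{-\ad_z}$ and fixes $z$, while $F^{2,1}$ is an automorphism of $\langle x,y\rangle$ fixing $z$. After these cancellations the right-hand side of \eqref{eq:hex1} collapses to the displayed expression, proving \eqref{eq:hex1}.

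The second hexagon \eqref{eq:hex2} need not be proved from scratch: it is the image of \eqref{eq:hex1} under the reversal $(1\,3)\in S_3$ combined with inversion. Concretely, I would invert \eqref{eq:hex1}, rewrite the inverses of the permuted associators by $(\Phi^{a,b,c})^{-1}=\Phi^{c,b,a}$, apply the automorphism $(1\,3)$ (under which $t^{12,3}\mapsto t^{1,23}$, $t^{2,3}\mapsto t^{1,2}$, $t^{1,3}\mapsto t^{1,3}$, and the superscripts of $\Phi$ permute accordingly), and invert once more; the outcome is exactly \eqref{eq:hex2}, with $t^{1,23}=t^{1,2}+t^{1,3}$. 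I expect the main obstacle to be the bookkeeping in the hexagon-one step: correctly computing the permuted coproduct forms of $\Phi$ and checking that the two commutation facts are precisely what is needed for the $F$-factors to telescope. The identity $R=Fe^{t/2}(F^{2,1})^{-1}$, which is where symmetry of $F$ is used, is the conceptual heart; the Yang--Baxter relation (Proposition \ref{prop:YB}) and centrality of $c$ enter only implicitly, through the clean form of the relations in Proposition \ref{prop:R=RR}, and everything else is organized cancellation.
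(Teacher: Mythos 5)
Your proposal is correct and follows essentially the same route as the paper: the inversion identity from Proposition \ref{prop:tauphi} together with $\tau(F)=F$, the first hexagon by substituting $R=Fe^{t/2}(F^{2,1})^{-1}$ into $R^{12,3}=R^{1,3}R^{2,3}$ and cancelling via the commutation lemma for $\sder$-type factors against coproduct factors and the fact that $R^{12,3}=\exp(-\ad_z)$ commutes with $F^{2,1}$, and the second hexagon by the $(13)$-permutation of the first combined with $(\Phi^{a,b,c})^{-1}=\Phi^{c,b,a}$. Your only deviation --- solving directly for $e^{(t^{1,3}+t^{2,3})/2}$ and matching it against the expanded right-hand side, rather than conjugating the relation by $F^{2,1}$ as the paper does --- is a cosmetic reorganization of the same cancellations.
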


\begin{proof}
Equation \eqref{eq:inverse} follows by Proposition \ref{prop:tauphi}.
In order to prove equation \eqref{eq:hex1} recall that 
$R^{12,3}=R^{1,3}R^{2,3}=(\exp(-\ad_z), \exp(-\ad_z), 1) \in \Dert_3$. Furthermore,
this automorphism commutes with $g^{1,2}$ for any $g\in \Dert_2$.
In particular, we have $F^{2,1} R^{12,3} (F^{2,1})^{-1}= R^{1,3} R^{2,3}$.
By substituting $R=Fe^{t/2}(F^{2,1})^{-1}$ we obtain,
$$
F^{2,1} R^{12,3} (F^{2,1})^{-1} = F^{2,1} F^{21,3} e^{(t^{1,2}+t^{1,3})/2}
(F^{3,12})^{-1} (F^{2,1})^{-1},
$$
and
$$
\begin{array}{lll}
R^{1,3} R^{2,3} & = & F^{1,3} e^{t^{1,3}/2}(F^{3,1})^{-1} F^{2,3}e^{t^{2,3}/2}(F^{3,2})^{-1} \\
& = & F^{1,3} F^{2,13} e^{t^{1,3}/2}(F^{2,31})^{-1} (F^{3,1})^{-1} 
F^{2,3} F^{23,1} e^{t^{2,3}/2}(F^{32,1})^{-1} (F^{3,2})^{-1} .
\end{array}
$$
A comparison of these two equations yields equation \eqref{eq:hex1}.
Equation \eqref{eq:hex2} follows by applying the $(13)$-permutation to equation
\eqref{eq:hex1} and by using the inversion formula \eqref{eq:inverse}.
\end{proof}

\begin{remark}
Equations \eqref{eq:hex1} and \eqref{eq:hex2} are called as {\em hexagon equations}.
They were first introduced in \cite{dr} (see equations (2.14a) and (2.14b)).
\end{remark}

\section{Associators}  \label{sec:associators}

In this Section we consider joint solutions of pentagon and hexagon 
equations called {\em associators} (with values in the group $\KV_3$). 
We show that Drinfeld's associators defined in \cite{dr} make part of this set,
and we use this fact to give a new proof of the KV conjecture.

\subsection{Associators with values in $KV_3$ and Drinfeld's associators}

\begin{definition}
An element $\Phi \in \KV_3$ is an associator if it satisfies 
the pentagon equation \eqref{eq:pent}, hexagon equations
\eqref{eq:hex1} and \eqref{eq:hex2} and the inversion property
\eqref{eq:inverse}.
\end{definition}

\begin{proposition}
Let $\Phi=\exp(\phi) \in \KV_3$ be an associator.
Then, $\pi(\phi_2)=1/8$.
\end{proposition}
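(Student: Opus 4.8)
The plan is to extract the degree-two component of one of the hexagon equations and read off $\alpha+\beta+\gamma$. Indeed, by the preceding proposition (which uses the pentagon equation) we already know $\phi_1=0$ and $\phi_2=(\alpha[y,z],\beta[z,x],\gamma[x,y])$, so that $\pi(\phi_2)=\alpha+\beta+\gamma$; the goal is to show this equals $1/8$. Concretely I would work with the hexagon equation \eqref{eq:hex1}, take the logarithm of both sides, and compare the parts of degree two. Since each $t^{i,j}\in\dert_3$ has degree one, the left-hand side $\exp((t^{1,3}+t^{2,3})/2)$ has logarithm $(t^{1,3}+t^{2,3})/2$, which is purely of degree one; hence the degree-two part of the left-hand side vanishes.

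For the right-hand side I would apply the Baker-Campbell-Hausdorff formula to the product of the five group elements $\Phi^{2,1,3}$, $e^{t^{1,3}/2}$, $(\Phi^{2,3,1})^{-1}$, $e^{t^{2,3}/2}$, $\Phi^{3,2,1}$, using
\[
\log(\exp(x_1)\cdots\exp(x_5))=\sum_i x_i+\tfrac12\sum_{i<j}[x_i,x_j]+(\text{terms of degree}\ge 3).
\]
The logarithms of the three $\Phi$-factors begin in degree two, with values $\phi^{2,1,3}$, $-\phi^{2,3,1}$ and $\phi^{3,2,1}$, so the linear sum contributes $\phi_2^{2,1,3}-\phi_2^{2,3,1}+\phi_2^{3,2,1}$ in degree two. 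The only bracket surviving in degree two is the one between the two degree-one factors, namely $\tfrac12[t^{1,3}/2,t^{2,3}/2]=\tfrac18[t^{1,3},t^{2,3}]$; every other bracket involves a $\Phi$-logarithm and is therefore of degree at least three. This produces the identity
\[
\phi_2^{2,1,3}-\phi_2^{2,3,1}+\phi_2^{3,2,1}+\tfrac18[t^{1,3},t^{2,3}]=0
\]
in $\dert_3$.

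Next I would apply the linear functional $\pi$. Each term above is a tangential derivation of the form $(\ast[y,z],\ast[z,x],\ast[x,y])$, so $\pi$ is defined on all of them. A direct check of the $S_3$-action on $\phi_2$ shows that $\pi$ is invariant under the $3$-cycle and changes sign under transpositions: $\pi(\phi_2^{2,3,1})=\alpha+\beta+\gamma$, while $\pi(\phi_2^{2,1,3})=\pi(\phi_2^{3,2,1})=-(\alpha+\beta+\gamma)$. Using the computation $[t^{1,3},t^{2,3}]=([y,z],[z,x],[x,y])$ from the proof of Proposition \ref{prop:tn}, we get $\pi([t^{1,3},t^{2,3}])=3$. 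Substituting into the identity yields $-3(\alpha+\beta+\gamma)+\tfrac38=0$, hence $\pi(\phi_2)=\alpha+\beta+\gamma=\tfrac18$, as claimed.

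The main obstacle I anticipate is the bookkeeping in the middle two steps: one must confirm that in the degree-two part of the Baker-Campbell-Hausdorff expansion no bracket other than $[t^{1,3},t^{2,3}]$ contributes (this uses that the $\Phi$-logarithms start in degree two, so any bracket involving them lands in degree at least three), and one must track the permutation action on $\phi_2$ together with the sign behaviour of $\pi$ with care. Once these are settled, the resulting linear equation is immediate, and the inversion and pentagon properties are not needed beyond guaranteeing the form of $\phi_2$.
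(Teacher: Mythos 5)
Your proof is correct and takes essentially the same route as the paper: both extract the degree-two component of the hexagon equation \eqref{eq:hex1}, arriving at the identity $\tfrac18[t^{1,3},t^{2,3}]+\phi_2^{2,1,3}-\phi_2^{2,3,1}+\phi_2^{3,2,1}=0$, and then apply $\pi$ using $\pi([t^{1,3},t^{2,3}])=3$ together with the same sign behaviour of $\pi$ under the $S_3$-action (invariant under the $3$-cycle, sign-reversing under transpositions). The only difference is that you spell out the Campbell--Hausdorff bookkeeping that justifies this degree-two identity, which the paper states without comment.
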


\begin{proof}
The degree two component of the hexagon equation \eqref{eq:hex1} reads
$$
\frac{1}{8} \, [t^{1,3}, t^{2,3}] + \phi_2^{2,1,3} - \phi_2^{2,3,1}
+\phi_2^{3,2,1} =0.
$$
Note that $[t^{1,3}, t^{2,3}]=([y,z], [z,x], [x,y])$ which implies
$\pi([t^{1,3}, t^{2,3}])=3$. Also observe that  $\pi(\phi_2^{2,3,1})=\pi(\phi_2)$
and $\pi(\phi_2^{2,1,3})=\pi(\phi_2^{3,2,1})=-\pi(\phi_2)$. We conclude
that $3\pi(\phi_2)=3/8$ and $\pi(\phi_2)=1/8$, as required.
\end{proof}

\begin{proposition}  \label{prop:Fsym}
Let $\Phi=\exp(\phi) \in \KV_3$ be a solution of equations 
\eqref{eq:pent} and \eqref{eq:inverse} with $\pi(\phi_2)=1/8$.
Then, each $F\in \Sol(\HKV)$ which verifies equation \eqref{eq:phiffff}
is a symmetric solution of the KV problem, $F \in \Sol^\tau(\HKV)$.
\end{proposition}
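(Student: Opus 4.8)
The plan is to exploit the compatibility of the involution $\tau$ with the assignment $F \mapsto \Phi_F$ together with the rigidity statement of Proposition \ref{prop:F1F2}. Concretely, I will show that $F$ and $\tau(F)$ induce the \emph{same} solution of the pentagon equation, deduce that they can only differ by a central factor $\exp(\lambda t)$, and finally invoke $\tau^2 = 1$ to force $\lambda = 0$.

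First I would read off from the hypothesis \eqref{eq:phiffff} that $\Phi$ is precisely the pentagon solution $\Phi_F$ attached to $F$. By Proposition \ref{prop:tauphi}, the pentagon solution attached to $\tau(F)$ is $\Phi_{\tau(F)} = (\Phi_F^{3,2,1})^{-1} = (\Phi^{3,2,1})^{-1}$. Now the inversion property \eqref{eq:inverse}, namely $\Phi^{1,2,3}\Phi^{3,2,1} = e$, gives $(\Phi^{3,2,1})^{-1} = \Phi^{1,2,3} = \Phi$. Hence $\Phi_{\tau(F)} = \Phi = \Phi_F$, so both $F$ and $\tau(F)$ --- which both lie in $\Sol(\HKV)$ --- give rise to one and the same solution of the pentagon equation. (The assumption $\pi(\phi_2) = 1/8$ enters only through the consistency of $\Phi$ with an $F$ at scale $s = 1$, ensuring $F \in \Sol(\HKV)$ in the first place.)

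Applying Proposition \ref{prop:F1F2} to the pair $F_1 = F$, $F_2 = \tau(F)$ then yields $\tau(F) = F\exp(\lambda t)$ for some $\lambda \in \k$. To pin down $\lambda$ I would use that $g = \exp(\lambda t)$ is symmetric: since $t^{2,1} = t$ one has $g^{2,1} = g$, and since $t$ is central $g$ commutes with $e^{-t/2}$. Exactly the computation used for the $\HKV_2^{sym}$-action then gives, for any $F \in \Sol(\HKV)$, $\tau(Fg) = R F^{2,1} g^{2,1} e^{-t/2} = R F^{2,1} e^{-t/2} g = \tau(F) g$. Consequently $F = \tau^2(F) = \tau(F\exp(\lambda t)) = \tau(F)\exp(\lambda t) = F\exp(2\lambda t)$. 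As $\exp \colon \hkv_2 \to \HKV_2$ is a bijection and $t \neq 0$, this forces $2\lambda t = 0$, hence $\lambda = 0$ and $\tau(F) = F$, i.e.\ $F \in \Sol^\tau(\HKV)$.

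The argument is largely formal once Propositions \ref{prop:tauphi} and \ref{prop:F1F2} are in hand; the only delicate point is the last step, where one must be certain that the indeterminacy in Proposition \ref{prop:F1F2} lies genuinely along the central generator $t$ --- so that $g = \exp(\lambda t)$ is itself $\tau$-symmetric. This is exactly what lets the relation $\tau^2 = 1$ convert the ambiguity $\exp(\lambda t)$ into $\exp(2\lambda t)$ and cancel it. I do not anticipate any essential obstacle beyond this bookkeeping.
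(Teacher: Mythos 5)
Your proof is correct, and its skeleton coincides with the paper's: both arguments use Proposition \ref{prop:tauphi} together with the inversion property \eqref{eq:inverse} to conclude $\Phi_{\tau(F)}=(\Phi^{3,2,1})^{-1}=\Phi=\Phi_F$, and then apply Proposition \ref{prop:F1F2} (legitimate here since $\tau(F)\in\Sol(\HKV)$ by the involution proposition) to obtain $\tau(F)=F\exp(\lambda t)$. The only genuine difference is the final step forcing $\lambda=0$. The paper compares degree-one components: writing $F=\exp(f)$ with $f_1=r/2+\alpha t$ (the coefficient $1/2$ of $r$ being fixed by the degree-two part of $F(x+y)=\ch(x,y)$), the degree-one part of $\tau(F)=F\exp(\lambda t)$ reads $r+f_1^{2,1}-t/2=f_1+\lambda t$, and since $r+r^{2,1}=t$ this gives $\lambda=0$. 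You instead exploit involutivity: from $\tau(Fg)=\tau(F)g$ for $g=\exp(\lambda t)$ --- valid since $t^{2,1}=t$ and $t$ is central, exactly the computation the paper makes for the $\HKV_2^{sym}$-action --- you deduce $F=\tau^2(F)=F\exp(2\lambda t)$, and bijectivity of $\exp:\dert_2\to\Dert_2$ kills $\lambda$. Your variant is slightly more conceptual: it requires no knowledge of the degree-one part of a solution of the KV problem, only the already-established facts $\tau^2=1$ and the $\tau$-equivariance along the central one-parameter subgroup, whereas the paper's low-degree check is more computational but self-contained. Both are complete, and your parenthetical about where $\pi(\phi_2)=1/8$ enters is accurate: it is the normalization that makes an $F\in\Sol(\HKV)$ at scale $s=1$ compatible with $\Phi$ possible at all (cf.\ Proposition \ref{prop:kv=>pent}).
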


\begin{proof} 
Theorem \ref{th:pent=>kv} implies that equation \eqref{eq:phiffff} admits
solutions $F \in \Sol(\HKV)$.
By Proposition \ref{prop:tauphi}, $\Phi_{\tau(F)}=(\Phi_F^{3,2,1})^{-1}=\Phi_F$.
Hence, by Proposition \ref{prop:F1F2}, $\tau(F)=F\exp(\lambda t)$ for some $\lambda \in \k$.
The degree one component of this equation reads
$r+f_1^{2,1}-t/2=f_1+\lambda t$. Since $f_1=r/2 + \alpha t$ for some
$\alpha \in \k$, we have $r+f_1^{2,1}-f_1=t/2$ and $\lambda=0$.
In conclusion, $\tau(F)=F$, as required.
\end{proof}

Recall that by Proposition \ref{prop:tn} Lie algebras $\t_n$ inject into
$\kv_n$. In particular, $\t_3$ injects into $\kv_3$, and the corresponding
group $T_3$ is a subgroup of $\KV_3$.

\begin{definition}
An associator $\Phi \in \KV_3$ is called a Drinfeld's associator if
$\Phi \in T_3$. 
\end{definition}

Drinfeld's associators can be defined without referring to
the Lie algebras $\dert_n$ and $\kv_n$ since both simplicial and 
coproduct maps restrict to Lie subalgebras $\t_n$ in a natural way.
In \cite{dr'} Drinfeld proved the following theorem:

\begin{theorem}
The set of Drinfeld's associators is non empty.
\end{theorem}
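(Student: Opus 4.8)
The plan is to produce an associator first over the field $\C$ by an explicit analytic construction, and then to descend the result to an arbitrary field $\k$ of characteristic zero. Since $\t_3$ injects into $\kv_3$ (Proposition \ref{prop:tn}) and the pentagon, hexagon and inversion equations are phrased entirely in terms of the elements $t^{i,j}$, of the braidings $e^{t^{i,j}/2}$, and of the simplicial and coproduct maps, it suffices to exhibit an element of the group $T_3$ satisfying \eqref{eq:pent}, \eqref{eq:hex1}, \eqref{eq:hex2} and \eqref{eq:inverse}; such an element is automatically an associator with values in $\KV_3$.

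Over $\C$, I would construct the Knizhnik--Zamolodchikov associator. Consider the $T_3$-valued ordinary differential equation
\[
G'(z) = \left( \frac{t^{1,2}}{z} + \frac{t^{2,3}}{z-1} \right) G(z)
\]
on the interval $(0,1)$, and let $G_0$, $G_1$ be the solutions distinguished by the asymptotics $G_0(z)\, z^{-t^{1,2}} \to 1$ as $z\to 0$ and $G_1(z)\,(1-z)^{-t^{2,3}} \to 1$ as $z\to 1$. The connection matrix $\Phi_{\mathrm{KZ}} := G_1^{-1} G_0$ is a constant element of $T_3$ whose coefficients are the regularized iterated integrals of $\d z/z$ and $\d z/(z-1)$; for the present purpose only its group-theoretic properties matter. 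The pentagon equation \eqref{eq:pent} expresses that the two boundary degenerations of a configuration of five points on the line yield the same monodromy, and follows from the flatness of the universal KZ connection on the moduli space $M_{0,5}$. The hexagon equations \eqref{eq:hex1} and \eqref{eq:hex2} express the compatibility of $\Phi_{\mathrm{KZ}}$ with the half-monodromy braidings, and follow from the three-point KZ monodromy once the normalization is fixed so that these half-monodromies equal $e^{t^{i,j}/2}$; the inversion relation \eqref{eq:inverse} follows from the symmetry $z\mapsto 1-z$, which exchanges $t^{1,2}$ with $t^{2,3}$ and $G_0$ with $G_1$. The degree-two part of the hexagon then forces $\pi(\phi_2)=1/8$, matching the normalization $s=1$ of Theorem \ref{th:pent=>kv}.

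To descend to $\k$, I would build an associator degree by degree, exactly as in the proof of Theorem \ref{th:pent=>kv}. Writing $\Phi=\exp(\phi)$ with $\phi=\sum_k \phi_k$ and assuming $\phi_k$ already chosen with rational coefficients for $k\le n$, the degree $(n+1)$ parts of \eqref{eq:pent}, \eqref{eq:hex1}, \eqref{eq:hex2} and \eqref{eq:inverse} become an inhomogeneous system of linear equations for $\phi_{n+1}\in\t_3$ whose homogeneous part is the $\d$-cocycle condition controlled by Theorem \ref{th:hd}. All of these equations have rational coefficients, and since $\Phi_{\mathrm{KZ}}$ provides a solution over $\C$ the systems are consistent; a consistent system of $\mathbb{Q}$-linear equations is solvable over $\mathbb{Q}$, hence over $\k$. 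Equivalently, the solution set is a torsor under the pro-unipotent group $\exp(\grt)$, whose $H^1$ over a characteristic-zero field vanishes, so the $\C$-point descends. Iterating produces an associator in $T_3\subset\KV_3$ over $\k$, and by Proposition \ref{prop:Fsym} it corresponds to a symmetric solution, so $\Sol^\tau(\HKV)$ is non-empty as well. The main obstacle in this program is the analytic input over $\C$: verifying that the iterated limits defining $\Phi_{\mathrm{KZ}}$ exist and that the resulting element satisfies the pentagon relation requires the full flatness and tangential-base-point regularization theory of the KZ connection on $M_{0,5}$, which is the technical heart of Drinfeld's argument. Once this is in place, the descent to $\k$ is a routine consequence of the $\mathbb{Q}$-rationality of the defining equations.
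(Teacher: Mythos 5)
The paper does not prove this statement at all: it is imported verbatim as Drinfeld's theorem with a citation to \cite{dr'}, and everything downstream (Theorem \ref{th:kvneq0}) uses it as a black box. So your proposal is not an alternative to an argument in the paper; it is an attempt to sketch Drinfeld's own two-step proof (the Knizhnik--Zamolodchikov associator over $\C$, then descent to an arbitrary field $\k$ of characteristic zero), and in outline that is indeed the right architecture. A cosmetic point first: with the unnormalized equation $G'(z)=\bigl(t^{1,2}/z+t^{2,3}/(z-1)\bigr)G(z)$ the half-monodromies are $e^{i\pi t^{i,j}}$ and the degree-two coefficient of $G_1^{-1}G_0$ involves $\zeta(2)$ rather than $1/24$; to match the paper's hexagons with $e^{t^{i,j}/2}$ and the normalization $\pi(\phi_2)=1/8$ one must put the factor $1/(2\pi i)$ into the connection. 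You acknowledge that a normalization is needed, so this is minor.

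The genuine gap is in your descent step as stated. You argue: the degree-$(n+1)$ parts of the pentagon, hexagon and inversion equations form a $\mathbb{Q}$-linear system for $\phi_{n+1}$, the system is consistent because $\Phi_{\mathrm{KZ}}$ solves it over $\C$, and a consistent $\mathbb{Q}$-linear system has a rational solution. But the inhomogeneous term of that system depends on the rational truncation $\phi_{\le n}$ you have already chosen, whereas $\Phi_{\mathrm{KZ}}$ only certifies consistency for \emph{its own} truncation, whose coefficients are multiple zeta values and in general differ from yours. To conclude that your rational truncation also extends over $\C$ you need to know that any two truncated solutions are related by the group action (so that extendability is truncation-independent); this transitivity is precisely the content of Drinfeld's Propositions 5.5 and 5.7--5.8 in \cite{dr}, and it does not follow from Theorem \ref{th:hd} of the present paper, which computes the pentagon obstruction in $\dert_3$ alone and says nothing about simultaneous compatibility with the hexagons and the inversion relation (for those one needs, e.g., the statement on symmetric solutions of equation \eqref{eq:4term}, which this paper only records in a remark). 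Your alternative formulation --- the solution set is a torsor under a pro-unipotent group, and $H^1$ of such a group over a characteristic-zero field vanishes --- is the standard correct repair, but the torsor property (simple transitivity, established without assuming nonemptiness) is itself the nontrivial input, again Drinfeld's and not proved here. In short, your sketch reproduces the skeleton of Drinfeld's proof while deferring exactly its two hard points: the analytic verification over $\C$, which you flag, and the transitivity/obstruction theory underlying the rationality descent, which you treat as routine but is not.
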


This implies the following result:

\begin{theorem}  \label{th:kvneq0}
The set of symmetric solutions of the KV problem $\Sol^\tau(\HKV)$
is non empty.
\end{theorem}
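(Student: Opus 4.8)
The plan is to assemble the statement from the results already established, starting from the existence of Drinfeld's associators. First I would invoke the theorem quoted just above (Drinfeld's existence theorem) to fix a Drinfeld's associator $\Phi \in T_3 \subset \KV_3$; by Proposition \ref{prop:tn} the inclusion $\t_3 \hookrightarrow \kv_3$ integrates to $T_3 \subset \KV_3$, so such a $\Phi$ is in particular an element of $\KV_3$ in our sense, and by definition it satisfies the pentagon equation \eqref{eq:pent}, the two hexagon equations \eqref{eq:hex1}, \eqref{eq:hex2}, and the inversion property \eqref{eq:inverse}.

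Next I would read off the normalization of the quadratic part. Since $\Phi$ is an associator, the degree-two computation of the proposition above (which uses the hexagon equation \eqref{eq:hex1}) gives $\pi(\phi_2)=1/8$. This is exactly the value needed to run the reconstruction of a KV solution: applying Theorem \ref{th:pent=>kv} with $\lambda = 1/8$ and $s=1$ (a square root of $8\lambda = 1$) produces a unique $F \in \Sol_1(\HKV) = \Sol(\HKV)$ of the form $F = \exp(u)\exp(r/2)$, with $u \in \dert_2$ of degree $\geq 2$, satisfying equation \eqref{eq:phiffff}, namely $\Phi = (F^{12,3})^{-1}(F^{1,2})^{-1}F^{2,3}F^{1,23}$.

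Finally I would upgrade this solution to a symmetric one. Because $\Phi$ satisfies both the pentagon equation \eqref{eq:pent} and the inversion equation \eqref{eq:inverse} and has $\pi(\phi_2)=1/8$, Proposition \ref{prop:Fsym} applies directly to the $F$ just constructed and yields $\tau(F)=F$, i.e. $F \in \Sol^\tau(\HKV)$. Hence $\Sol^\tau(\HKV)$ is non empty, as claimed.

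The step I expect to require the most care is not a computation but the identification itself: checking that a Drinfeld's associator, defined through its action on $\t_3$, really does satisfy the $\KV_3$-valued pentagon, hexagon and inversion equations in the precise normalization used here. This is guaranteed by Proposition \ref{prop:tn} together with the remark that the simplicial and coproduct maps restrict compatibly to the subalgebras $\t_n \subset \kv_n$, so that the defining equations of a Drinfeld's associator are exactly the restrictions of equations \eqref{eq:pent}, \eqref{eq:hex1}, \eqref{eq:hex2} and \eqref{eq:inverse}. Once this compatibility is in place, the remaining steps are purely a matter of quoting Theorem \ref{th:pent=>kv} and Proposition \ref{prop:Fsym}.
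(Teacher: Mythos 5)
Your proposal is correct and follows essentially the same route as the paper: take a Drinfeld's associator, use the hexagon-based normalization $\pi(\phi_2)=1/8$, reconstruct $F$ via Theorem \ref{th:pent=>kv} (with $s=1$), and conclude symmetry from Proposition \ref{prop:Fsym}. Your extra care about the compatibility of $T_3\subset\KV_3$ with the simplicial and coproduct maps is sound and implicit in the paper's Definition of Drinfeld's associators together with Proposition \ref{prop:tn}.
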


\begin{proof}
Each Drinfeld's associator $\Phi=\exp(\phi)$ is an associator with values
in $\KV_3$ with $\pi(\phi_2)=1/8$. Then, by Theorem \ref{th:pent=>kv}, there is an element
$F=\exp(f) \in \Dert_2$ with $f_1=r/2$ which solves equation \eqref{eq:phiffff}.
By Proposition \ref{prop:kv=>pent} this automorphism is a solution of the KV problem,
and by Proposition \ref{prop:Fsym} this solution is symmetric.
\end{proof}

\begin{remark}
The KV problem has been settled in \cite{am}. 
The solution is based on the Kontsevich deformation quantization scheme \cite{kont},
and on the earlier work of the second author \cite{torossian}. Theorem \ref{th:kvneq0}
gives a new proof of the KV conjecture by reducing it to
the existence theorem  for Drinfeld's associators.
\end{remark}

\begin{proposition} \label{prop:extra3}
Let $\Phi=\exp(\phi) \in T_3$ be a Drinfeld's associator, and
let $F \in \Sol(\HKV)$ be a solution of the KV problem which
satisfies equation \eqref{eq:phiffff}.  Write
$\phi=h(\ad_{t^{2,3}}) t^{1,2} + \dots$, where $h\in x\k[[x]]$, and
$\dots$ stand for terms which contain at least two generators $t^{1,2}$. 
Then, the Duflo function associated to $F$ satisfies equation $f'(x)=h(x)$.
\end{proposition}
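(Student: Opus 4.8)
The plan is to reduce the statement to Proposition~\ref{prop:extra2}. Applied to the associator $\Phi=\exp(\phi)$ and the solution $F$ (its hypotheses hold because a Drinfeld's associator satisfies the pentagon equation, lies in $\KV_3$, and has $\pi(\phi_2)=1/8$), that proposition already gives $f'(x)=\tilde h(x)$, where $\tilde h\in x\k[[x]]$ is determined by $B(x,0,z)_{x-lin}=\tilde h(\ad_z)x$ and $(A,B,C)\in\dert_3$ is the image of $\phi$ under the embedding $\t_3\hookrightarrow\kv_3\subset\dert_3$ of Proposition~\ref{prop:tn}. It therefore suffices to show that this $\tilde h$ coincides with the $h$ read off from the $\t_3$-expansion $\phi=h(\ad_{t^{2,3}})t^{1,2}+\dots$. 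First I would justify that expansion: since $\phi$ has no degree-one part and the central element $c=t^{1,2}+t^{1,3}+t^{2,3}$ spans the center in degree one, the Remark following Proposition~\ref{prop:tn}, combined with the $S_3$-symmetry, gives $\t_3=\k c\oplus\lie(t^{1,2},t^{2,3})$, whence $\phi\in\lie(t^{1,2},t^{2,3})$. Grading this free Lie algebra by the number of factors $t^{1,2}$ (its \emph{weight}), the weight-one part is spanned by the elements $(\ad_{t^{2,3}})^n t^{1,2}$ and so equals $h(\ad_{t^{2,3}})t^{1,2}$ for a unique $h\in x\k[[x]]$, while the remainder has weight at least two.

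The heart of the argument is to compute $B(x,0,z)_{x-lin}$ from this expansion, using $t^{1,2}=(y,x,0)$ and $t^{2,3}=(0,z,y)$ together with the fact that the embedding of Proposition~\ref{prop:tn} is a Lie algebra homomorphism. I would prove by induction on word length the following lemma: for any bracket monomial $M$ in $t^{1,2},t^{2,3}$ of weight $p$ with $q$ factors $t^{2,3}$, its image $(M_1,M_2,M_3)\in\dert_3$ satisfies, modulo the ideal $\langle y\rangle\subset\lie_3$, the relations $M_1\equiv M_3\equiv 0$ and $M_2$ homogeneous of bidegree $(p,q)$ in $(x,z)$. The inductive step is what makes everything work: if $u=(a_1,a_2,a_3)$ already satisfies $a_1|_{y=0}=a_3|_{y=0}=0$, then $u$ descends to the \emph{zero} derivation of $\lie_3/\langle y\rangle=\lie(x,z)$, since $u(x)=[x,a_1]$ and $u(z)=[z,a_3]$ both vanish modulo $y$. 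Hence in the tangential bracket $[u,v]_k=u(b_k)-v(a_k)+[a_k,b_k]$ the first two terms die modulo $y$, leaving $M_2|_{y=0}=[u_2|_{y=0},v_2|_{y=0}]$, of bidegree $(p_1+p_2,q_1+q_2)=(p,q)$, and the vanishing of $M_1,M_3$ modulo $y$ propagates identically. This lemma is the main obstacle; once it is in place the rest is immediate.

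Indeed, the lemma shows that $M_2|_{y=0}$ has $x$-degree exactly $p$, so a monomial contributes to the linear-in-$x$ part $B(x,0,z)_{x-lin}$ only if $p=1$, and every weight-$\geq 2$ term drops out. For the weight-one part the same recursion gives $\bigl((\ad_{t^{2,3}})^n t^{1,2}\bigr)_2|_{y=0}=(\ad_z)^n x$, since $(t^{2,3})_2|_{y=0}=z$ and $(t^{1,2})_2|_{y=0}=x$, so $h(\ad_{t^{2,3}})t^{1,2}$ contributes exactly $h(\ad_z)x$. Therefore $\tilde h(\ad_z)x=B(x,0,z)_{x-lin}=h(\ad_z)x$, whence $\tilde h=h$, and Proposition~\ref{prop:extra2} yields $f'(x)=h(x)$, as claimed.
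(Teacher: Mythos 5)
Your proof is correct and takes essentially the same route as the paper: the paper likewise reduces to Proposition \ref{prop:extra2} by setting $y=0$, observing $t^{1,2}=(y,x,0)\mapsto(0,x,0)$ and $t^{2,3}=(0,z,y)\mapsto(0,z,0)$, hence $\phi(t^{1,2},t^{2,3})_{y=0}=(0,\phi(x,z),0)$ and $B(x,0,z)_{x-lin}=h(\ad_z)x$. Your inductive lemma (derivations whose first and third components vanish modulo $\langle y\rangle$ act as zero on $\lie_3/\langle y\rangle$, so the tangential bracket reduces to $[a_2,b_2]$ there) simply makes rigorous the homomorphism property hidden in the paper's one-line ``Hence''.
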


\begin{proof}
By putting $y=0$ we obtain $t^{1,2}=(y,x,0) \mapsto (0,x,0)$
and $t^{2,3} =(0,z,y) \mapsto (0,z,0)$. Hence,
$$
\phi(t^{1,2}, t^{2,3})_{y=0}=(0, \phi(x,z), 0).
$$
In particular, for $\phi=(A,B,C)$, we have $B(x,0,z)_{x-lin}=h(\ad_z)x$.
Then, by Proposition \ref{prop:extra2}, we obtain $f'(x)=h(x)$, as
required.
\end{proof}

\begin{example}
Consider the Knizhnik-Zamolodchikov associator (with values in $T_3$)
constructed in Drinfeld. Equation (2.15) of \cite{dr}
yields the function $h(x)$: 
$$
h(x)= - \sum_{n=2}^\infty \, \frac{\zeta(n)}{(2\pi i)^n} \, x^{n-1} .
$$
Note that our associators are obtained by taking an inverse 
of associators the in Drinfeld's paper. The Duflo function
corresponding to the Knizhnik-Zamolodchikov associator is given by
$$
f(x)= - \sum_{n=2}^\infty \, \frac{\zeta(n)}{n (2\pi i)^n} \, x^n =
\frac{\gamma}{2\pi i} x - \ln\left(\Gamma\left(1-\frac{x}{2\pi i}\right)\right) .
$$
Here $\gamma$ is the Euler's constant, and the term
$\gamma x/2\pi i$  cancels the linear part  in the logarithm of the 
$\Gamma$-function. Formula for $f(x)$ matches (up to a sign change) 
the expression $\ln(F_{nice}(x))$ in \cite{kont'}.
\end{example}

\subsection{Actions of the group $\GRT$}
Let $\LLie_n$ be a group associated to the Lie algebra $\lie_n$
(such that $a\cdot b=\ch(a,b)$).
Then, one can view the Grothendieck-Teichm\"uller group $\GRT$
as a subset of $\LLie_2$ defined by a number of relations
(see Section 5 of \cite{dr}), and equipped with the new
multiplication,
$$
(h_1 *_{\GRT} \, h_2)(x,y)=h_1(x, h_2(x,y)yh_2^{-1}(x,y)) h_2(x,y).
$$ 

\begin{remark}
Note that we have chosen to act on the second
argument of the function $h$ rather than on the first one (as in \cite{dr}).
\end{remark}

Let $\psi \in \grt$ and consider a one parameter subgroup of $\GRT$ defined
by $\psi$, $h_s=\exp_{\GRT}(s\psi)$. Write $h_t=h_{t-s} *_{\GRT} h_s$
and differentiate in $t$ at $t=s$ to obtain
$$
\frac{dh_s(x,y)}{ds} = \psi(x, h_s(x,y) y h_s(x,y)^{-1}) h_s(x,y).
$$
This differential equation together with the initial condition
$h_0(x,y)=1$ defines the exponential function $\exp_{\GRT}$
in a unique way.

\begin{proposition}  \label{prop:new}
Let $\psi \in \grt$, $h=\exp_{\GRT}(\psi) \in \GRT$ and 
$g=\exp(\nu(\psi)) \in \HKV_2$. Then,
$$
\hat{g}=(g^{12,3})^{-1} (g^{1,2})^{-1} g^{2,3} g^{1,23} = h(t^{1,2},t^{2,3}) \in \KV_3 .
$$
\end{proposition}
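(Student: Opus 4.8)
The plan is to pass to one-parameter subgroups and match ordinary differential equations in a scaling parameter $s$. Set $\Psi=\nu(\psi)\in\hkv_2$, $g_s=\exp(s\Psi)$ and $h_s=\exp_{\GRT}(s\psi)$, so $g=g_1$, $h=h_1$ and $g\in\HKV_2$. Write
$$
Q_s=(g_s^{12,3})^{-1}(g_s^{1,2})^{-1}g_s^{2,3}g_s^{1,23},\qquad P_s=h_s(t^{1,2},t^{2,3}),
$$
two paths in the group of $\sder_3$ with $Q_0=P_0=e$; it suffices to prove $Q_s=P_s$ for all $s$, since then $\hat g=Q_1=P_1=h(t^{1,2},t^{2,3})$, which lies in $T_3\subset\KV_3$ by Proposition \ref{prop:tn}. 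Differentiating the defining relation of $\exp_{\GRT}$ and substituting $x=t^{1,2}$, $y=t^{2,3}$ gives the right logarithmic derivative $\dot P_s P_s^{-1}=\psi(t^{1,2},\Ad_{P_s}t^{2,3})$, so I shall aim to show that $Q_s$ satisfies the same first-order equation.

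To compute $\dot Q_s Q_s^{-1}$ I would use that each simplicial/coproduct map is a group homomorphism, whence $\frac{d}{ds}g_s^I=\Psi^I g_s^I$, and apply the product rule. Three reductions then bring the four resulting terms to a usable shape: (i) each $g_s^I=\exp(s\Psi^I)$ commutes with its own generator $\Psi^I$; (ii) the relations $[\Psi^{1,2},\Psi^{12,3}]=0$ and $[\Psi^{2,3},\Psi^{1,23}]=0$ hold — the first is the commutation $[u^{1,2},v^{12,3}]=0$ for $u\in\sder_2$, $v\in\dert_2$ established in Section \ref{sec:derivations}, and the second follows from it under the cyclic permutation $1\mapsto 2\mapsto 3\mapsto 1$ combined with the identity $\Psi^{1,23}=\Psi^{23,1}$, itself a direct consequence of the symmetric form $\nu(\psi)=(\psi(-x-y,x),\psi(-x-y,y))$ in which both components carry $-(x+y)$ in the first slot; (iii) writing $A=g_s^{12,3}$, $B=g_s^{1,2}$, $C=g_s^{2,3}$, $D=g_s^{1,23}$ and rewriting the frames as $A^{-1}B^{-1}C=Q_sD^{-1}$ and $A^{-1}B^{-1}=Q_sD^{-1}C^{-1}$. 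After these steps one arrives at the clean expression
$$
\dot Q_s Q_s^{-1}=\Ad_{Q_s}\bigl(\Psi^{2,3}+\Psi^{1,23}\bigr)-\bigl(\Psi^{12,3}+\Psi^{1,2}\bigr).
$$

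Here Proposition \ref{prop:dpsi} enters: it gives $\Psi^{2,3}+\Psi^{1,23}-\Psi^{12,3}-\Psi^{1,2}=\d\Psi=\psi(t^{1,2},t^{2,3})$, so the generator above rewrites as $(\Ad_{Q_s}-1)(\Psi^{12,3}+\Psi^{1,2})+\psi(\Ad_{Q_s}t^{1,2},\Ad_{Q_s}t^{2,3})$, using that $\Ad_{Q_s}$ is an automorphism and $\psi$ a Lie series. Matching with the $P_s$-equation thus reduces to the single identity
$$
(\Ad_{Q_s}-1)\bigl(\Psi^{12,3}+\Psi^{1,2}\bigr)=\psi(t^{1,2},\Ad_{Q_s}t^{2,3})-\psi(\Ad_{Q_s}t^{1,2},\Ad_{Q_s}t^{2,3}),
$$
which says that conjugation by $Q_s$ only alters the second argument of the Ihara data; granting it, uniqueness for $\dot Y Y^{-1}=\psi(t^{1,2},\Ad_Y t^{2,3})$ with $Y(0)=e$ forces $Q_s=P_s$. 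I expect this identity to be the main obstacle: its two sides coincide at $s=0$ by Proposition \ref{prop:dpsi} and as tangent vectors along the $\GRT$ orbit, but not as vector fields on all of $\KV_3$, so establishing it requires the defining relations \eqref{eq:grt1}--\eqref{eq:grt3} of $\grt$ to control how $\Psi^{12,3}+\Psi^{1,2}$ intertwines with the inner action of $t^{1,2}$, and is most safely done by induction on the degree with leading term furnished by Proposition \ref{prop:dpsi}. A possible way to bypass the delicate identity is to note that both $\hat g$ and $h(t^{1,2},t^{2,3})$ solve the pentagon equation in $\KV_3$ with $\pi(\phi_2)=0$, and to apply Theorem \ref{th:pent=>kv} at $s=0$ (where $\Sol_0(\HKV)=\HKV_2$) to reduce the claim to identifying the unique $\HKV_2$-preimages, again degree by degree.
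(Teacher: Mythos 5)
Your setup is exactly the paper's (one-parameter paths $Q_s=\widehat{g_s}$ and $P_s=h_s(t^{1,2},t^{2,3})$, matching of first-order ODEs), and your derivation is correct up to the expression $\dot Q_sQ_s^{-1}=\Ad_{Q_s}\bigl(\Psi^{2,3}+\Psi^{1,23}\bigr)-\bigl(\Psi^{12,3}+\Psi^{1,2}\bigr)$, including the use of Proposition \ref{prop:dpsi} to convert $\d\Psi$ into $\psi(t^{1,2},t^{2,3})$. But the identity you then flag as ``the main obstacle'' and leave to an unspecified induction is a genuine gap --- and it is precisely where the paper's one-line trick enters, which you missed: \emph{$t^{1,2}$ commutes with $g^l_s=g_s^{1,2}g_s^{12,3}$ and $t^{2,3}$ commutes with $g^r_s=g_s^{2,3}g_s^{1,23}$}. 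Indeed $[\Psi^{1,2},t^{1,2}]=[\Psi,t]^{1,2}=0$ and $[\Psi^{2,3},t^{2,3}]=0$ because $t=c$ spans the center of $\sder_2$, while $[t^{1,2},\Psi^{12,3}]=0$ and $[t^{2,3},\Psi^{1,23}]=0$ are instances (up to the $S_3$-action, using $t^{2,1}=t$) of the lemma $[u^{1,2},v^{12,3}]=0$ for $u\in\sder_2$, $v\in\dert_2$ from Section \ref{sec:derivations}, applied with $u=t$, $v=\Psi$. Granting these, since $\Ad$ of a group element is an automorphism and $\psi$ is a Lie series,
\[
\dot Q_sQ_s^{-1}=\Ad_{(g^l_s)^{-1}}\psi(t^{1,2},t^{2,3})
=\psi\bigl(t^{1,2},\Ad_{(g^l_s)^{-1}}t^{2,3}\bigr)
=\psi\bigl(t^{1,2},\Ad_{(g^l_s)^{-1}g^r_s}\,t^{2,3}\bigr)
=\psi\bigl(t^{1,2},\Ad_{Q_s}t^{2,3}\bigr),
\]
where the middle equalities use that $t^{1,2}$ is fixed by $\Ad_{(g^l_s)^{-1}}$ and $t^{2,3}$ by $\Ad_{g^r_s}$. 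ODE uniqueness with $Q_0=P_0=e$ then finishes the proof, and your ``delicate identity'' drops out as an immediate corollary rather than as a statement needing degree-by-degree control of the $\grt$ relations.

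Your suggested bypass does not close the argument either. Theorem \ref{th:pent=>kv} at $s=0$ gives uniqueness of the preimage $F\in\HKV_2$ of a given pentagon solution with $\pi(\phi_2)=0$, i.e.\ injectivity of $g\mapsto\hat g$; but to conclude you must still show that the unique preimage of $h(t^{1,2},t^{2,3})$ is $g=\exp(\nu(\psi))$, which is exactly the statement $\hat g=h(t^{1,2},t^{2,3})$ being proved --- ``identifying the preimages degree by degree'' restates the problem rather than solving it. Moreover, the group-level pentagon equation for $h=\exp_{\GRT}(\psi)$ is not established in the paper (only the Lie-algebra relations \eqref{eq:grt1}--\eqref{eq:grt3} are used), so that route would also import an external fact from Drinfeld's theory.
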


\begin{proof}
First, observe that for $g \in \Sder_2$, $g^{1,2}$ commutes with $g^{12,3}$, and $g^{2,3}$ 
commutes with $g^{1,23}$. Hence, the maps $g \mapsto g^l=g^{1,2}g^{12,3}$ and 
$g \mapsto g^r=g^{2,3}g^{1,23}$ are group homomorphisms mapping $\Sder_2$ to $\Sder_3$.

Next, replace $\psi$ by $s\psi$ and consider the derivative in $s$ 
of $\hat{g}_s=(g^l_s)^{-1}g^r_s$:
$$
\begin{array}{lll}
\frac{d\hat{g}_s}{ds} & = & (g^l_s)^{-1} \left( \frac{dg^r_s}{ds} \, (g^r_s)^{-1} - 
\frac{dg^l_s}{ds} \, (g^l_s)^{-1} \right) \, g^r_s \\
& = & (g^l_s)^{-1} (\d \nu(\psi)) g^r_s \\
& = & (g^l_s)^{-1} \psi(t^{1,2}, t^{2,3}) g^r_s \\
& = & \psi(t^{1,2}, (g^l_s)^{-1} t^{2,3} g^l_s) (g^l_s)^{-1} g^r_s \\
& = & \psi(t^{1,2}, (g^l_s)^{-1}g^r_s t^{2,3} (g^r_s)^{-1} g^l_s) \hat{g}_s \\
& = & \psi(t^{1,2}, \hat{g}_s t^{2,3} (\hat{g}_s)^{-1}) \hat{g}_s .
\end{array}
$$
Obviously, $\hat{g}_0=e \in \KV_3$. We conclude that $h(t^{1,2}, t^{2,3})$ and
$\hat{g}$ satisfy the same first order linear ordinary differential equation with
the same initial condition. Hence, they coincide, as required.
\end{proof}

The Lie algebra homomorphism $\nu: \grt \rightarrow \hkv_2$ gives rise to a subgroup
of $\HKV_2$ isomorphic to $\GRT$. The group $\HKV_2$ acts on the set of solutions
of the KV problem, and on the set of associators with values in $\KV_3$ 
(see equation \eqref{eq:twist}). In \cite{dr} (see Section 5)
Drinfeld defines a free and transitive action of the group $\GRT$ on the
set of associators with values in $T_3$. This action is given by the
following formula,
\begin{equation}  \label{eq:grtaction}
g: \Phi(t^{1,2},t^{2,3}) \mapsto \Phi(t^{1,2}, gt^{2,3}g^{-1}) g,
\end{equation}
where $g=\exp_{\GRT}(\psi) \in \GRT$ and $\Phi \in T_3$ are viewed as elements 
of the group  $\LLie_2(t^{1,2}, t^{2,3})$. 
The following proposition relates these two actions.

\begin{proposition}
When restricted to the set of Drinfeld's associators, the 
action of the group $\GRT$ on associators with values in 
$\KV_3$ coincides with the canonical action \eqref{eq:grtaction}.
\end{proposition}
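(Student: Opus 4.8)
The plan is to evaluate the Drinfeld twist \eqref{eq:twist} by the element $g=\exp(\nu(\psi))\in\HKV_2$ attached to $\psi\in\grt$, and to recognize the outcome as the canonical action \eqref{eq:grtaction} of $h=\exp_{\GRT}(\psi)\in\GRT$. Using the notation $g^l=g^{1,2}g^{12,3}$ and $g^r=g^{2,3}g^{1,23}$ of the proof of Proposition \ref{prop:new}, the twist of a Drinfeld associator $\Phi$ is
$$
(g^{12,3})^{-1}(g^{1,2})^{-1}\Phi\,g^{2,3}g^{1,23}=(g^l)^{-1}\Phi\,g^r=\left((g^l)^{-1}\Phi\,g^l\right)\hat g ,
$$
where $\hat g=(g^l)^{-1}g^r$. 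By Proposition \ref{prop:new}, $\hat g=h(t^{1,2},t^{2,3})$, so the trailing factor is exactly the group element $h$ that appears in \eqref{eq:grtaction}; it then remains to identify $(g^l)^{-1}\Phi\,g^l$ with the substitution $\Phi(t^{1,2},h\,t^{2,3}h^{-1})$.

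First I would record the conjugation rules of $g^l$ on the generators $t^{1,2}$ and $t^{2,3}$ of $T_3\subset\KV_3$, which are precisely the identities already extracted in the proof of Proposition \ref{prop:new}. Since $t$ is central in $\hkv_2$ and $[u^{1,2},v^{12,3}]=0$ for $u\in\sder_2$, $v\in\dert_2$, both $g^{1,2}$ and $g^{12,3}$ commute with $t^{1,2}$, whence $(g^l)^{-1}t^{1,2}g^l=t^{1,2}$. Symmetrically $g^r$ commutes with $t^{2,3}$, so that
$$
(g^l)^{-1}t^{2,3}g^l=(g^l)^{-1}\left(g^r\,t^{2,3}(g^r)^{-1}\right)g^l=\hat g\,t^{2,3}\hat g^{-1}.
$$

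Next I would use that conjugation by $(g^l)^{-1}$ is a group automorphism of $\KV_3$, so it acts on any group-like word in the generators by substituting into those generators. Here the restriction to Drinfeld's associators is essential: by definition $\Phi\in T_3$ is a series $\Phi(t^{1,2},t^{2,3})$ in the two generators $t^{1,2},t^{2,3}$, which freely generate a Lie subalgebra of $\t_3$, so the substitution is unambiguous and the multiplication in $\LLie_2(t^{1,2},t^{2,3})$ agrees with composition in $\KV_3$. Combining with the conjugation rules above yields $(g^l)^{-1}\Phi\,g^l=\Phi(t^{1,2},\hat g\,t^{2,3}\hat g^{-1})$, and hence the twist equals $\Phi(t^{1,2},h\,t^{2,3}h^{-1})\,h$, which is exactly \eqref{eq:grtaction}. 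The step I expect to require the most care is this last one: one must know that a Drinfeld associator involves only $t^{1,2}$ and $t^{2,3}$ (and not $t^{1,3}$), so that the two conjugation rules suffice, and that the central element $c=t^{1,2}+t^{1,3}+t^{2,3}$ causes no interference since it drops out of all brackets. Everything else is a direct assembly of \eqref{eq:twist}, Proposition \ref{prop:new}, and the conjugation identities already present in its proof.
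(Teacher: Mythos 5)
Your proposal is correct and follows essentially the same route as the paper's proof: the paper likewise rewrites the twist \eqref{eq:twist} as $\Phi(t^{1,2},\hat g\, t^{2,3}\hat g^{-1})\,\hat g$ with $\hat g=(g^{12,3})^{-1}(g^{1,2})^{-1}g^{2,3}g^{1,23}$ and then invokes Proposition \ref{prop:new} to identify $\hat g$ with $(\exp_{\GRT}(\psi))(t^{1,2},t^{2,3})$. The only difference is that you spell out the conjugation identities (centrality of $t$ in $\hkv_2$, the commutation $[u^{1,2},v^{12,3}]=0$, and that $\Phi\in T_3$ is a word in $t^{1,2},t^{2,3}$ alone) which the paper leaves implicit, having effectively established them inside the proof of Proposition \ref{prop:new}.
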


\begin{proof}
Let $g \in \HKV_2$ and rewrite the action \eqref{eq:twist} on
$\Phi(t^{1,2},t^{2,3}) \in T_3$ as follows,
$$
\Phi \cdot g = (g^{12,3})^{-1}(g^{1,2})^{-1} \Phi(t^{1,2},t^{2,3}) g^{2,3}g^{1,23}
=\Phi(t^{1,2}, \hat{g} t^{2,3} \hat{g}^{-1}) \hat{g} ,
$$
for $\hat{g}=(g^{12,3})^{-1}(g^{1,2})^{-1} g^{2,3}g^{1,23}$. 
Let $\psi \in \grt$ and $g=\exp(\nu(\psi))$. Then, by Proposition \ref{prop:new} 
we have $\hat{g}=(\exp_{\GRT}(\psi))(t^{1,2},t^{2,3})$, and the action
\eqref{eq:twist} coincides with the canonical action
\eqref{eq:grtaction}.
\end{proof}

\begin{remark}
If Conjecture of Section \ref{sec:KVlie} is correct, we have 
$\HKV_2 \cong \k t \times \nu(\GRT)$, where the additive group 
$\k t$ injects into $\HKV_2$ via the exponential map, 
$\lambda t \mapsto \exp(\lambda t)$.
In particular, this implies $\HKV_2=\HKV_2^{sym}$ since 
both $\k t$ and $\nu(\GRT)$ are contained in $\HKV_2^{sym}$.
Note that the action of $\k t$ on associators is trivial, and 
the action of $\GRT$ on the set of Drinfeld's
associators is transitive. The action of $\HKV^{sym}_2$ on associators with
values $\KV_3$ is also transitive, and we conclude that all associators
with values in $\KV_3$ are Drinfeld's associators.
\end{remark}

\begin{remark}
For Drinfeld's associators,
Furusho \cite{furusho} showed that the hexagon equations \eqref{eq:hex1}, \eqref{eq:hex2}
and the inversion property \eqref{eq:inverse} follow from the pentagon equation and the
normalization condition $\pi(\phi_2)=1/8$. 
In the case of associators with values in $\KV_3$,
Proposition \ref{prop:Fsym} shows that the hexagon equations \eqref{eq:hex1}, 
\eqref{eq:hex2} follow from the pentagon equation, the inversion property and the
normalization condition $\pi(\phi_2)=1/8$. If we assumed  $\HKV_2=\HKV_2^{sym}$, 
the inversion property would be automatic, and we would get the analogue of
Furusho's result for associators with values in $\KV_3$. 
If Conjecture of Section \ref{sec:KVlie} holds true, we recover
the Furusho's result.
\end{remark}

\vskip 0.3cm

{\sc Appendix: proof of Proposition \ref{prop:dpsi}}

\vskip 0.2cm

In this Appendix we give a proof of Proposition \ref{prop:dpsi}.
It is inspired by the proof of Proposition 5.7 in \cite{dr}.
 
Denote $\d \Psi=(a,b,c)$. We have,
$$
\begin{array}{lll}
a & = & -\psi(-x-y,x)+\psi(-x-y-z,x)-\psi(-x-y-z,x+y), \\
b & = & -\psi(-x-y,y)+\psi(-x-y-z,y+z)-\psi(-x-y-z,x+y)+\psi(-y-z,y), \\
c & = & \psi(-x-y-z,y+z) -\psi(-x-y-z,z) +\psi(-y-z,z).
\end{array}
$$
Let $\g$ be the semi-direct sum of $\dert_3$ and $\lie_3$. The following formulas
define an injective Lie algebra homomorphism of $\t_4$ to $\g$:
$$
\begin{array}{lll}
t^{1,2} \mapsto (y,x,0) \in \dert_3, & t^{1,3} \mapsto (z,0,x) \in \dert_3, &
t^{2,3} \mapsto (0,z,y) \in \dert_3, \\
t^{1,4} \mapsto x \in \lie_3, & t^{2,4} \mapsto y \in \lie_3, &
t^{3,4} \mapsto z \in \lie_3.
\end{array}
$$
Indeed, $t^{1,2}, t^{1,3}$ and $t^{2,3}$ span a Lie subalgebra of $\dert_3$
isomorphic to $\t_3$, and $x,y$ and $z$ span an ideal of $\t_4$ isomorphic to a 
free Lie algebra with three generators.
It remains to check the Lie brackets between generators of these two 
Lie subalgebras. For instance, we compute,
$$
[t^{1,2}, t^{3,4}]=t^{1,2}(z)=0, \,\,\,
[t^{1,2}, t^{2,4}]=t^{1,2}(y)=[y,x]=[t^{2,4},t^{1,4}],
$$
as required.

Note that $(\d \Psi)(x)$ is the image of the following element of
$\t_4$,
$$
\begin{array}{ll}
 & [t^{1,4}, -\psi(-t^{1,4}-t^{2,4},t^{1,4})+\psi(-t^{1,4}-t^{2,4}-t^{3,4},t^{1,4}) \\
- & \psi(-t^{1,4}-t^{2,4}-t^{3,4},t^{1,4}+t^{2,4})] \\
= & [t^{1,4}, -\psi(t^{1,2}, t^{1,4})+\psi(t^{1,2}+t^{1,3}+t^{2,3}, t^{1,4}) -
\psi(t^{1,2}+t^{1,3}+t^{2,3}, t^{1,4} + t^{2,4})] \\
= & [t^{1,4}, -\psi(t^{1,2}, t^{1,4}) +\psi(t^{1,2}+t^{1,3}, t^{1,4}) -
\psi(t^{1,3}+t^{2,3}, t^{1,4} + t^{2,4})] \\
= & [t^{1,4}, -\psi(t^{2,3}, t^{1,2}+t^{2,4}) +\psi(t^{2,3}, t^{1,2})] \\ 
= & [t^{1,4}, \psi(t^{2,3}, t^{1,2})] = [\psi(t^{1,2}, t^{2,3}), t^{1,4}].
\end{array}
$$
Here in passing from the first to the second line we used the properties of 
central elements in $\t_3$ and $\t_4$. For instance, $t^{1,2}+t^{1,4}+t^{2,4}$ is central
in the Lie subalgebra (isomorphic to $\t_3$) spanned by $t^{1,2}, t^{1,4}$ and
$t^{2,4}$. In the passage from the second to the third line we used
the defining relations of the Lie algebra $\t_4$. For instance, in the second term we used
that $t^{2,3}$ has a vanishing bracket with $t^{1,4}$ and $t^{1,2}+t^{1,3}$.
In the passage from the second to the third line we used a $(3214)$ permutation
of the equation \eqref{eq:grt3}. Finally, in the last passage we again used the
defining relations of $\t_4$, and in particular the fact that $t^{1,4}$ has a vanishing
bracket with $t^{2,3}$ and with $t^{1,2}+t^{2,4}$. In conclusion, we have
$$
\d \Psi(x) =\psi(t^{1,2}, t^{2,3})(x).
$$
Similarly, $(\d \Psi)(y)$ is the image of the following element,
$$
\begin{array}{ll}
& [t^{2,4}, -\psi(-t^{1,4}-t^{2,4},t^{2,4})+\psi(-t^{1,4}-t^{2,4}-t^{3,4},t^{2,4}+ t^{3,4}) \\
- & \psi(-t^{1,4}-t^{2,4}-t^{3,4},t^{1,4}+t^{2,4})+\psi(-t^{2,4}-t^{3,4},t^{2,4})] \\
= & [t^{2,4}, -\psi(t^{1,2}, t^{2,4}) + \psi(t^{1,2}+t^{1,3}+t^{2,3}, t^{2,4}+ t^{3,4}) \\
- & \psi(t^{1,2}+t^{1,3}+t^{2,3}, t^{1,4}+ t^{2,4}) + \psi(t^{2,3}, t^{2,4})] \\
= & [t^{2,4}, -\psi(t^{1,3}, t^{1,2}+t^{1,4}) +\psi(t^{1,3}, t^{1,2})
+\psi(t^{1,3}, t^{2,3}+t^{3,4}) -\psi(t^{1,3}, t^{2,3})] \\
= & [t^{2,4}, -\psi(t^{1,3}, t^{1,2}+t^{1,4})+\psi(t^{1,3}, t^{2,3}+t^{3,4})-
\psi(t^{1,2}, t^{2,3})] \\
= &  [\psi(t^{1,2}, t^{2,3}), t^{2,4}].
\end{array}
$$
Here we used the $(1324)$ and $(3124)$ permutations of equation \eqref{eq:grt3}
as well as equation \eqref{eq:grt2} which implies $\psi(t^{1,2},t^{2,3})=
\psi(t^{1,2},t^{1,3})+\psi(t^{1,3},t^{2,3})$. Again, the conclusion is
$$
\d \Psi (y) = \psi(t^{1,2}, t^{2,3})(y).
$$
Finally, we represent $(\d \Psi)(z)$ as the image of the element
$$
\begin{array}{ll}
 & [t^{3,4}, \psi(-t^{1,4}-t^{2,4}-t^{3,4},t^{2,4}+t^{3,4}) -
\psi(-t^{1,4}-t^{2,4}-t^{3,4}, t^{3,4}) \\
+ & \psi(-t^{2,4}-t^{3,4}, t^{3,4})] \\
= & [t^{3,4}, \psi(t^{1,2}+t^{1,3}+t^{2,3},t^{2,4}+t^{3,4}) -
\psi(t^{1,2}+t^{1,3}+t^{2,3}, t^{3,4}) + \psi(t^{2,3}, t^{3,4})] \\
= & [t^{3,4}, \psi(t^{1,2}+t^{1,3}, t^{2,4}+t^{3,4}) -
\psi(t^{1,3}+t^{2,3}, t^{3,4}) + \psi(t^{2,3}, t^{3,4})] \\
= & [t^{3,4}, -\psi(t^{1,2}, t^{2,3}) + \psi(t^{1,2}, t^{2,3}+t^{2,4})] =
[\psi(t^{1,2}, t^{2,3}), t^{3,4}],
\end{array}
$$
where we used the equation \eqref{eq:grt3} (no permutation needed). We conclude
$$
\d \Psi(z) =\psi(t^{1,2}, t^{2,3})(z),
$$
and $\d \Psi = \psi(t^{1,2}, t^{2,3})$, as required.

\end{document}